\documentclass{amsart}
\usepackage{amsmath}
\usepackage{amsfonts}
\usepackage{amssymb,mathrsfs}
\usepackage{hyperref}
\usepackage{xcolor}
\usepackage{graphicx}
\vfuzz2pt % Don't report over-full v-boxes if over-edge is small
\hfuzz2pt % Don't report over-full h-boxes if over-edge is small
\usepackage{tikz}
\usetikzlibrary{matrix,arrows}
\usepackage[all]{xy}

\graphicspath{{./Images/}}

\newtheorem{Theorem}{Theorem}[section]
\newtheorem{Cor}[Theorem]{Corollary}
\newtheorem{Lemma}[Theorem]{Lemma}
\newtheorem{Proposition}[Theorem]{Proposition}

\newtheorem{Definition}[Theorem]{Definition}
\newtheorem{rem}[Theorem]{Remark}
\newtheorem{example}[Theorem]{Example}
\newtheorem{notation}[Theorem]{Notation}
% MATH -----------------------------------------------------------

\newcommand{\R}{\mathbb R}

\newcommand{\N}{\mathbb N}
\newcommand{\Z}{\mathbb Z}

\newcommand{\C}{\mathcal{C}}

\newcommand{\F}{\mathcal{F}}
\newcommand{\p}{\mathcal{D}}
\newcommand{\A}{\mathcal{A}}
%\newcommand{\D}{\mathcal{D}}

%opening
\title{On diffeologies for power sets and measures}
\author{Alireza Ahmadi}
\address{Alireza Ahmadi, Department of Mathematics, Yazd University, 89195--741, Yazd, Iran}
\email{ ahmadi@stu.yazd.ac.ir, alirezaahmadi13@yahoo.com}

\author{Jean-Pierre Magnot}
\address{Jean-Pierre Magnot,  CNRS, LAREMA, SFR MATHSTIC, F-49000 Angers, France \\ and \\  Lyc\'ee Jeanne d'Arc,  Avenue de Grande Bretagne,  63000 Clermont-Ferrand, France}

\email{jean-pierr.magnot@ac-clermont.fr}
\begin{document}

	\begin{abstract}
		We consider %{the problem of} 
		a differential geometric setting on power sets and Borel algebras. Our chosen framework %{remains}
		is	based on diffeologies, and we make a link between the various diffeological structures that we propose, having in mind set-valued maps, relations, set-valued gradients, differentiable measures, and shape analysis. This work intends to 
		%{settle}
		establish rigorous properties on sample diffeologies that seem of interest to us.
	\end{abstract}
	
	\maketitle
	\vskip 12pt
	\textit{Keywords:} {diffeology, power sets, set-valued maps, differentiable measures, shape spaces.} 
	
	\textit{MSC (2020):} { 58B10, 28B20, 47H04}
	\tableofcontents
	\section*{Introduction}
	
	%		A set-valued map assigns to each element of some set a subset of another set. 		This kind of maps have found many applications in diverse fields such as		game theory, mathematical economics,		differential inclusions, optimal control, biomathematics, qualitative physics and viability theory.		In general, they can be used to model the equations with at least one solution.

	From analysis on shape spaces, smoothness of measures and on spaces of measures to set-valued analysis, the question of differentiability on the power set of a smooth space\footnote{By the power set of a space, we mean the set of all subsets of that space.} remains a difficult problem  for which pragmatic approaches are often used. These techniques have their applications in problems of great importance. For example,  
	many real world problems can be reformulated as shape optimization problems constrained by partial differential equations (PDE). A non-exhaustive list of examples includes: inverse modeling of skin structures \cite{NSSW2015}, electrochemical machining \cite{HL2011}, image restoration and segmentation \cite{HR2004}, aerodynamic shape optimisation \cite{SISG2013} and optimization of interfaces in transmission problems \cite{GLMS2015,Pa2015}. Textbooks on the subjects are, for example, \cite{DZ2001,Sz1992}.   
	Often, the shape space is modelled on a vector space, on which one considers landmark positions \cite{CTCG1995,K1984} in most simple cases. But more refined frameworks, such as morphologies of images \cite{DR2007}, multiphase objects \cite{WBRS2011}, characterictic functions of measurable sets \cite{Z2007}, boundary contours of objects \cite{FJSY2009,LJ2007,WR2009} along the lines of \cite{MH1993}, plane curves \cite{MM2007,MSJ2007} and surfaces in higher dimensional manifolds \cite{BHM2011,KMP2007,MM2005}, are also considered in various approaches. 
	
	From another point of view, and with a field of applications as large as differentiability of shapes,  differentiablility of measures on the domain defined by a Borel algebra, based on and extending the differentiation of functions through their representation via Radon measures, various ways to define differentiability of measures have been defined (see, e.g., \cite{Bog}). In the same way, differential geometric techniques are crucial in the optimization of set-valued functions. 
	Although there are several notions of smoothness for set-valued maps of Euclidean spaces with different approaches, the smoothness of set-valued maps has not been developed on more general smooth spaces such as manifolds.
	
	All these notions are centered on the power set of a smooth space equipped with a ``nice'' and ``suitable'' differential geometric setting that induces, in various ways, a differentiation-like procedure on the power set in question. This differentiation remains in applications an important step for the implementation of the resolution of equations.

	The aim of the present work is to provide a suitable systematic framework for 
	differentiation on the power set of a space, in which the flavor of rigor and adeptness is comforting enough to enable one to think about a general theory adapted to applied frameworks. 
	For this purpose, we work in the framework of diffeology, which was established by J.-M. Souriau \cite{Sou} in the 1980s  and has been developed by P. Iglesias-Zemmour and others in various directions.
	Diffeological spaces form a category for differential calculus and differential geometry, actually one of the widest categories in which the calculus of variations is well-defined, and for which the technical problems remain reasonable. The reader can refer to the book \cite{Igdiff} for a comprehensive introduction to diffeologies, while Fr\"olicher spaces \cite{FK,KM} form an interesting subcategory of the category of diffeological spaces.   Diffeological spaces \cite{Igdiff,Sou} as well as Fr\"olicher spaces \cite{FK,KM} define two categories of generalized frameworks for differential geometry, where the presence of atlases is not needed, and the necessary properties for the classical calculus of variations are fulfilled. 
	
	The contents of the paper are as follows. 
	%	Therefore, we define diffeologies on power sets, and consider various specializations. 
	%	Due to the chosen framework itself, 
	We recall basics on diffeologies and Fr\"olicher spaces, at least those properties that are crucial to understanding the framework we develop for power sets in Section \ref{Prel}. Diffeologies on a set $X$ are based on the notion of plots, that is, maps with images in $X$ which are decided to be smooth. The set of such smooth maps that agree with some minimal properties is called a diffeology on $X $. % and such a smooth map is called a plot of the diffeology.  
	
	After that, we provide and study in detail samples of diffeologies on power sets with desirable properties in Sections \ref{wpsdiff}, \ref{upsdiff} and \ref{spsdiff}. These are new refinements of the power set diffeology described in Iglesias-Zemmour's book \cite{Igdiff} on the power set
	$ \mathfrak{P}(X) $ of a diffeological space $ X $, which we call the weak, the union, and the strong power set diffeologies, and are derived from the diffeological space $ X $ with some certain properties.
	%$\mathfrak{P}^\star(X)= \mathfrak{P}(X) - \{\emptyset\}.$ 
	Comparing these diffeologies, we get the following inclusions:
	\begin{center}
		Strong diffeology $\quad \hookrightarrow\quad $	Union diffeology $ \quad\hookrightarrow\quad $ Weak diffeology
	\end{center}
	%	The successive refinements, as well as their properties.
	We discuss their geometrical properties, especially we prove that a diffeological space is ``strongly" embedded into its power set endowed with either the union or the strong power set diffeologies.
	
	Having diffeological structures on power sets, we are easily able to treat smooth set-valued maps as usual smooth maps in the diffeological setting.
	From a topological perspective, we show that smooth set-valued maps are lower semi-continuous with respect to the D-topology.
	Similar to continuous selection problems (see, e.g., \cite{Mi1956}), we consider the smooth versions of these problems and explore their relationships with the smoothness of a set-valued map. In particular, on a manifold, the smoothness of a set-valued map is equivalent to solving the smooth selection problem over an open cover. In the case of Euclidean spaces, this result is compatible with \cite[Theorem 2.2.1]{SZ}.
	It is well-known that there is a  one-to-one correspondence between set-valued maps and relations.
	We suggest notions of smooth relations as those whose corresponding set-valued maps are smooth.
	Moreover, we consider the space of smooth relations between diffeological spaces and compare the induced diffeologies on it.
	
	In Section \ref{ppsdiff}, we describe   
	so-called projectable diffeologies on power sets, developed in the same spirit as the $Diff-$diffeology defined in \cite{Ma2020-3}, that is, the plots are derived from smooth families of functions acting on $X$. We observe that:
	\begin{center}
		Locally projectable diffeology $\quad \hookrightarrow\quad $	Union diffeology
	\end{center}
	
	In these various definitions, which may appear as natural from one viewpoint or another, we do not find so easily diffeologies for which Boolean operations are smooth (we call them Boolean diffeologies). 
	In Section \ref{bdiff}, we prove the existence of diffeologies on the power set $\mathfrak{P}(X)$ for which the Boolean operations are smooth. They will be discussed  in more detailed examples in other upcoming sections.  %in the dedicated to applications and we call them Boolean diffeologies.
	
	We also show that it is possible to fit with some existing frameworks for differentiation 
	%with
	in   our setting. In Sections \ref{diffmeas} and \ref{Fom}, we analyze how diffeologies on a Borel algebra may encode differentiability, or smoothness of measures in it. We first analyze in Section \ref{diffmeas} the %(more abstract) viewpoint 
	existence  of a Boolean diffeology, which we show to be always defined, but also not-so natural in the concrete example of measures on a smooth manifold   since we prove (Remark \ref{vaguediff}) that a natural intuitive diffeology is not  a priori Boolean in this framework.  Then, we show that we can recover Fomin differentiability from the viewpoint of a ``directional'' diffeology in Section \ref{Fom}. 
	In Section \ref{measdiff}, we define a diffeology on the space of measures. This example is motivated by the counter-example of Remark \ref{vaguediff} of Section \ref{diffmeas}, and requires more deep and more complete investigations that must be developed elsewhere. 
	This diffeology  on measures defined in Section \ref{measdiff}  
	is different from the diffeology defined in \cite{Ma2020-3}, and not only on the space of probabilities but on the full space of (positive), non necessarily finite, measures. 
	
	We finish with a possible framework for set-valued maps, and show how Hadamard derivatives of functionals on shape analysis fit with a directional derivative of the globally projectable diffeology in Sections \ref{stvdiff} and \ref{diffshp}.

	%{\color{blue} ARA: I didn't realize which section is related to this:}	In section 2, we complete these notions with concerns that are specific to the topic. We first analyze sub-differentiation or set-valued differentiation. Indeed, diffeologies are developed to deal with smooth maps. Actually, only side examples deal with so-called $C^k$ diffeologies, especially for $k = 1$ or $2.$ But for a given diffeology,it appears necessary to distinguish functions with limited smoothness, or functions that are differentiable only in a given direction, or functions that are sub-differentiable.

	%The power set of a diffeological space has natural diffeologies, so-called the power set diffeologies (Definition ...), 
	%such that every plot in the power set, locally, is as the union of single-valued plot within. 
	%Thus, diffeology simply will enable us to deal with smooth set-valued maps or smooth relations
	%as usual single-valued smooth maps.
	%Also, we observe that the notion of a smooth set-valued map is compatible with the classical one between Euclidean spaces (see, e.g., \cite[Theorem 2.2.1]{SZ}).
	%}

\section{Preliminaries}\label{Prel}

\subsection{On diffeologies} 
We review here the basics of the theory of diffeological spaces, especially, their definitions, categorical properties, as well as their induced topology. {{} The main idea of diffeologies (and Fr\"olicher spaces defined shortly after) is to replace the atlas of a classical manifold with other intrinsic objects that enable to define smoothness of mappings in a safe way, considering manifolds as a restricted class of examples. Many such settings have been developed independently \cite{Sta}. We choose these two settings because they carry nice properties such as cartesian closedness, the necessary fundamental properties of, e.g., calculus of variations, and also because they are very easy to use in a differential geometric way of thinking. The fundamental idea of these two settings is based on defining families of smooth maps, with mild conditions on them which ensure technical features of interest. }

\begin{Definition}[Diffeology] \label{d:diffeology}
	Let $X$ be a set.  A \textbf{parametrization} in $X$ is a
	map of sets
	$P \colon U \to X$, where $U$ is an open subset of Euclidean space (no fixed dimension).  A \textbf{diffeology} $\p$ on $X$ is a set of
	parametrizations in $X$ satisfying the following three conditions:
	\begin{enumerate}
		\item (Covering) For every $x\in X$ and every non-negative integer
		$n$, the constant map $P\colon \R^n\to\{x\}\subset X$ is in
		$\p$.
		\item (Locality) Let $P\colon U\to X$ be a parametrization such that for
		every $r\in U$ there exists an open neighborhood $V\subset U$ of $r$
		satisfying $P|_V\in\p$. Then $P\in\p$.
		\item (Smooth Compatibility) Let $(P\colon U\to X)\in\p$.
		Then for every $n$, every open subset $V\subset\R^n$, and every
		smooth map $F\colon V\to U$, we have $P\circ F\in\p$.
	\end{enumerate}
	A set $X$ equipped with a diffeology $\p$ is called a
	\textbf{diffeological space}, and is {{} denote}d by $(X,\p)$.
	When the diffeology is understood, we will drop the symbol $\p$.
	The parametrizations $P\in\p$ are called \textbf{plots} in the space $X$.
\end{Definition}
\begin{Definition} 
	A collection $\mathcal{D}$ of parametrizations in a set $X$ satisfying the covering and smooth compatibility conditions is said to be a  \textbf{prediffeology} on $X$.
	If $\mathcal{D}$ fulfills the covering condition, it is a \textbf{parametrized cover} of $X$.
\end{Definition} 
\begin{Definition}
	Let $ X $ be any set, and let $\p$ and $\p'$ be
	diffeologies on $X$. If $\p\subset \p'$, then  $\p$ is \textbf{finer} that $\p'$, or equivalently, $\p'$ is \textbf{coarser} that $\p $. 
\end{Definition}

\begin{example}
	Let $ X $ be any set.
	The set of the locally constant parametrizations in  $ X $ is a diffeology on $ X $ called the \textbf{discrete
		diffeology}.
	The set of all parametrizations in a set $ X $ is a diffeology on $ X $ called the \textbf{indiscrete} or \textbf{coarse
		diffeology}.
	It is trivial that the discrete
	diffeology is the finest diffeology and the indiscrete diffeology is the coarsest diffeology on $ X $, and any other diffeology on $ X $ is between them.
\end{example}
\begin{Definition} 
	A family $\lbrace P_i: U_i\rightarrow X\rbrace_{i\in J}$ of parametrizations defined on open subsets of $ \mathbb{R}^n $  is \textbf{compatible} if
	$P_i|_{U_i\cap U_j}=P_j|_{U_i\cap U_j}$, for all $i, j\in J$. 
	For such a family, the parametrization 
	$P:\bigcup_{i\in J} U_i\rightarrow X$ given by $P(r)=P_i(r)$ for $r\in U_i$, 
	is said to be the \textbf{supremum} of the family. 
\end{Definition} 
\begin{rem}
	The locality condition of diffeology in Definition \ref{d:diffeology} is equivalent to saying that the supremum of any compatible family of plots is itself a plot.
\end{rem}

\begin{Definition}
	Let  $X$ be a set and let $\mathcal{C}$ be a set of
	parametrizations in $X$ satisfying the covering condition in Definition \ref{d:diffeology}.
	The \textbf{diffeology generated} by $ \mathcal{C} $, denoted by $\langle\mathcal{C}\rangle$, is the set of  parametrizations $ P $ 
	that are the supremum of a compatible family
	$ \lbrace P_i\rbrace_{i\in J} $ of parametrizations in $ X $ in the form $ P_i=Q_i\circ F_i $,
	where $ Q_i $ is an element of $\mathcal{C}$ and $ F $ is a smooth map between domains.
	For a diffeological space $ (X,\mathcal{D}) $,
	a \textbf{covering generating family} is a parametrized cover $\mathcal{C}$ of $ X $  generating the diffeology of the space, i.e., $\langle\mathcal{C}\rangle=\mathcal{D}$.
	Denote by $ \mathsf{CGF}(X) $ the collection of all covering generating families of the space $ X $.% Clearly, the diffeology of a diffeological space is itself a covering generating family.
\end{Definition}

\begin{Definition}
	The \textbf{dimension} $ \mathrm{dim}(X) $ of a  diffeological space $ X $ is a nonnegative integer that is defined by
	\begin{center}
		$ \mathrm{dim}(X)= ~~~\inf_{\mathcal{G} \in\mathsf{CGF}(X)} ~~\mathrm{dim}(\mathcal{G}), $
	\end{center}
	where
	\begin{center}
		$ \mathrm{dim}(\mathcal{G})=~~~\sup_{P\in\mathcal{G}} ~~\mathrm{dim}\big{(}\mathrm{dom}(P)\big{)} $
	\end{center}
	for each $ \mathcal{G} \in\mathsf{CGF}(X) $.
	If $ X $ has no covering generating family with finite dimension, we write $ \mathrm{dim}(X)=\infty $.
\end{Definition} 

\begin{Definition}[Diffeologically Smooth Maps]\label{d:diffeolmap}
	Let $(X,\p_X)$ and $(Y,\p_Y)$ be two diffeological
	spaces, and let $F \colon X \to Y$ be a map.  Then we say that $F$ is
	\textbf{diffeologically smooth} if for any plot $P \in \p_X$,
	$$F \circ P \in \p_Y.$$
	Any smooth map with a smooth inverse is called a \textbf{diffeomorphisms}.
\end{Definition}

Diffeological spaces with diffeologically smooth maps form a category. This category is complete and co-complete, and forms a quasi-topos (see \cite{BH}).
\begin{Definition}
	Let $X$ and $Y$ be diffeological spaces.
	The \textbf{functional diffeology} on the set $C^{\infty}(X,Y)$ of all smooth maps from $X$ to $Y$  is
	given by the following condition: 
	A parametrization $ Q:V\rightarrow C^{\infty}(X,Y)$ is a plot for the functional diffeology if and only if for every plot $P:U\rightarrow X$, the parametrization
	$ Q\circledcirc P:V\times U\rightarrow Y$
	with
	$ (Q\circledcirc P)(r,s)= Q(r)\big{(}P(s)\big{)} $
	is a plot in $ Y $. 
\end{Definition} 

Indeed, the functional diffeology on $ C^{\infty}(X,Y) $ is the coarsest diffeology for which
$ \mathrm{ev}: C^{\infty}(X,Y)\times X\rightarrow Y $ defined by $ \mathrm{ev}(f,x) $ to  $ f(x) $ is smooth.

\begin{Definition}
	\cite{Sou,Igdiff} Let $(X',\p)$ be a diffeological space,
	and let $X$ be a set. Let $f:X\rightarrow X'$ be a map.
	We define $f^*(\p)$ the \textbf{pull-back diffeology} as {{} $$f^*( \p)= \left\{ P: U \rightarrow X \, |   f \circ P \in \p \right\}. $$ }
\end{Definition}
%}
\begin{Definition}
Let $(X,\p_X)$ and $(Y,\p_Y)$
be two diffeological spaces.  An injective map
$f : X \rightarrow Y$
is  called  an
\textbf{induction}
if  $\p_Y = f^*(\p_X).$ 
\end{Definition}

\begin{Definition} \cite{Sou,Igdiff} Let $(X,\p)$ be a diffeological space,
and let $X'$ be a set. Let $f:X\rightarrow X'$ be a map.
We define $f_*(\p)$ the \textbf{push-forward diffeology} as $ \langle f \circ \p\rangle $, which is the coarsest diffology
on $X'$ containing $f \circ \p=\{f\circ P\mid P\in\p\}.$ 
\end{Definition}  %}

\begin{Definition}
Let $(X,\p_X)$ and $(Y,\p_Y)$
be two diffeological spaces.  A map
$f : X \rightarrow Y$
is  called  a
\textbf{subduction}
if  $\p_Y = f_*(\p_X).$ 
\end{Definition}

In particular, we have the following constructions.

\begin{Definition}[Product Diffeology]\label{d:diffeol product}
Let $\{(X_i,\p_i)\}_{i\in I}$ be a family of diffeological spaces.  Then the \textbf{product diffeology} $\p$ on $X=\prod_{i\in I}X_i$ contains a parametrization $P\colon U\to X$ as a plot if for every $i\in I$, the map $\pi_i\circ P$ is in $\p_i$.  Here, $\pi_i$ is the canonical projection map $X\to X_i$. 
\end{Definition}

In other words, in last definition, $\p = \bigcap_{i \in I} \pi_i^*(\p_i)$ and each $\pi_i$ is a subduction.

\begin{Definition}[Subset Diffeology]\label{d:diffeol subset}
Let $(X,\p)$ be a diffeological space, and let $Y\subset X$.  Then $Y$ comes equipped with the \textbf{subset diffeology}, which is the set of all plots in $\p$ with image in $Y$.
\end{Definition}

\begin{Definition}
Every diffeological space $ X $ has a natural topology called the D-\textbf{topology} in which a subset of $X$ is D-\textbf{open} if its preimage by any plot is open. 

\end{Definition}

Any smooth map is D-continuous, that is, continuous with respect to the D-topology \cite[\S 2.9]{Igdiff}.

\begin{notation}
We recall that $\N^* = \{n \in \N \, | \, n \neq 0\}$ and that $\forall m \in \N^*, \N_m = \{1,...,m\} \subset \N.$
\end{notation}
If $ M $ is a smooth manifolds, finite or infinite dimensional, modelled on a complete locally convex topological vector space, we define the \textbf{nebulae diffeology}
$$\p_\infty(M) = \left\{ P \in C^\infty(U,M) \hbox{ (in the usual sense) }| U \hbox{ is open in } \R^d, d \in \N^* \right\}.$$

\subsection{Diffeological submersions, immersions, and \'{e}tale maps}
We here briefly recall the needed definitions and results without proofs from \cite{ARA}.
\begin{Definition} 
A (not necessarily surjective) smooth map $ f:X\rightarrow Y $ between diffeological spaces is a \textbf{weak subduction} if for any plot $ P:U\rightarrow Y $  and $ r_0\in U $, if $ P(r_0)\in f(X) $, then there exists at least one local lift plot $ L:V\rightarrow X $ defined on an open
neighborhood $ V\subset U $ of $ r_0 $ with $ f\circ L=P|_V $.
In this situation, $ f(X) $ is a D-open subset of $ Y $.
\end{Definition}

\begin{Definition}
We call a smooth map $ f:X\rightarrow Y $ between diffeological spaces a \textbf{submersion} if for each $ x_0 $ in $ X $, there exists a smooth local section $ \sigma:O\rightarrow X $ of $ f $
passing through $ x_0 $ defined on a D-open subset $ O\subseteq Y $ such that $ f\circ\sigma(y)=y $ for all $ y\in O $.
A smooth map $ f:X\rightarrow Y $  is said to be a \textbf{diffeological submersion} if
the pullback  of $ f $ by every plot in $ Y $ is a submersion.
A \textbf{local subduction} is a surjective diffeological submersion.
\end{Definition}

\begin{Definition} 
A smooth map $ f:X\rightarrow Y $ between diffeological spaces is an \textbf{immersion} if for each $ x_0 $ in $ X $, there exist a D-open neighborhood $ O\subseteq X $ of the point $ x_0 $, a D-open neighborhood $ O'\subseteq Y $ of the set $ f(O) $, and a smooth map $ \rho:O'\rightarrow X $ such that 
$ \rho\circ f(x)=x $ for all $ x\in O $. 
A smooth map $ f:X\rightarrow Y $  is a \textbf{diffeological immersion} if
for any plot $ P:U\rightarrow Y $ in $ Y $,
for each $ (r_0,x_0) $ in $ P^*X $, there exist a D-open neighborhood $ O $ of $ (r_0,x_0) $ in $ P^*X $, an open neighborhood $ V\subseteq U $ of $ P^*f(O) $ and a smooth map $ \rho:V\rightarrow U\times X $ such that $ \rho\circ P^*f(r,x)=(r,x) $ for all $ (r,x)\in O $.
\begin{displaymath}
\xymatrix{
	U\times X\ar@/^0.4cm/[dr]^{\Pr_2} & \\
	P^*X \ar[u]\ar[r]^{P_{\#}}\ar[d]^{P^*f} & X\ar[d]^{f} \\
	V\ar@/^0.6cm/[uu]^{\rho}\ar[r] \subseteq U \ar[r]^{P} & Y  }
\end{displaymath}
\end{Definition}

\begin{Proposition}\label{p:immr}\cite{ARA}
Suppose we are given 
a commutative diagram of smooth maps as the following:
$$
\xymatrix{
X \ar[dr]_{g} \ar[rr]^{f} & &Y \ar[dl]^{h}\\
& Z  & 
}
$$
If $ g $ is a diffeological immersion, then  $ f $ is a diffeological immersion.
In particular, if $ f:X\rightarrow (Y,\mathcal{D}) $ is a diffeological immersion, and 
$ \mathrm{id}:(Y,\mathcal{D}')\rightarrow (Y,\mathcal{D}) $	is smooth, then $ f:X\rightarrow (Y,\mathcal{D}') $ is a diffeological immersion.
\end{Proposition}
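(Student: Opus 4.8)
The plan is to verify the diffeological immersion condition for $f$ directly from the definition, reducing it to the assumed immersion condition for the composite $g = h \circ f$ by comparing the two relevant pullbacks. First I would fix an arbitrary plot $P : U \to Y$ and form the parametrization $Q = h \circ P : U \to Z$, which is a plot because $h$ is smooth. The key observation, which I expect to be the heart of the argument, is that the pullback $P^*X = \{(r,x) \in U \times X \mid P(r) = f(x)\}$ is contained in $Q^*X = \{(r,x) \in U \times X \mid Q(r) = g(x)\}$: indeed $P(r) = f(x)$ forces $h(P(r)) = h(f(x))$, i.e. $Q(r) = g(x)$, using the commutativity $g = h \circ f$. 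Moreover both $P^*f$ and $Q^*g$ are restrictions of the first projection $U \times X \to U$, so they coincide on $P^*X$.

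Next I would invoke the hypothesis that $g$ is a diffeological immersion, applied to the plot $Q$. For a given $(r_0, x_0) \in P^*X \subseteq Q^*X$, this furnishes a D-open neighborhood $O'$ of $(r_0,x_0)$ in $Q^*X$, an open $V \subseteq U$ containing $Q^*g(O')$, and a smooth map $\rho : V \to U \times X$ with $\rho \circ Q^*g(r,x) = (r,x)$ for all $(r,x) \in O'$. I would then restrict this data rather than build it anew: set $O = O' \cap P^*X$ and keep $V$ and $\rho$ unchanged. Since the inclusion $P^*X \hookrightarrow Q^*X$ is smooth (both carry the subset diffeology induced from $U \times X$), it is D-continuous, so $O$ is a D-open neighborhood of $(r_0,x_0)$ in $P^*X$. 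Using that $P^*f$ and $Q^*g$ agree on $P^*X$, we obtain $P^*f(O) = Q^*g(O) \subseteq Q^*g(O') \subseteq V$, and for every $(r,x) \in O \subseteq O'$ we get $\rho \circ P^*f(r,x) = \rho \circ Q^*g(r,x) = (r,x)$. This is precisely the immersion condition for $f$ at $P$, and as $P$ was arbitrary, $f$ is a diffeological immersion.

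The main, and essentially only, obstacle is the bookkeeping around the two pullbacks and the D-topology: one must confirm that intersecting a D-open subset of $Q^*X$ with $P^*X$ yields a D-open subset of $P^*X$, and that the first-projection maps $P^*f$ and $Q^*g$ genuinely coincide on $P^*X$. Everything else is immediate once the containment $P^*X \subseteq Q^*X$ is established, because the retraction $\rho$ produced for $g$ serves verbatim for $f$.

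Finally, the ``in particular'' clause I would derive as the special case $Z = (Y,\mathcal{D})$, with $h = \mathrm{id} : (Y,\mathcal{D}') \to (Y,\mathcal{D})$ and $g = f : X \to (Y,\mathcal{D})$. The triangle commutes since $\mathrm{id} \circ f = f$, the map $h$ is smooth by hypothesis, and $g$ is the given immersion; the general statement then yields that $f : X \to (Y,\mathcal{D}')$ is a diffeological immersion.
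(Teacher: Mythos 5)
Your argument is correct. Note that the paper itself states this proposition without proof, merely recalling it from the cited reference \cite{ARA}, so there is no in-text proof to compare against; your reduction is the natural one and it is sound: the containment $P^*X \subseteq Q^*X$ for $Q = h\circ P$, the observation that $P^*f$ and $Q^*g$ are both restrictions of the first projection and hence agree on $P^*X$, the D-openness of $O'\cap P^*X$ via D-continuity of the smooth inclusion $P^*X \hookrightarrow Q^*X$, and the verbatim reuse of $V$ and $\rho$ together verify the immersion condition for $f$, and the ``in particular'' clause follows by taking $Z=(Y,\mathcal{D})$ and $h=\mathrm{id}$ exactly as you say (with the smoothness of $f:X\rightarrow (Y,\mathcal{D}')$ implicitly assumed, as it is in the statement itself).
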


\begin{Definition} 
A map $ f:X\rightarrow Y $ between diffeological spaces is called a \textbf{strong embedding} if it is an induction, a diffeological immersion, and a D-embedding (i.e., a topological embedding with respect to the D-topology).
\end{Definition}

\begin{Definition}
A map $ f:X\rightarrow Y $ between diffeological spaces is \textbf{\'{e}tale} if for every $ x $ in $ X $, there are D-open neighborhoods $ O\subseteq X $ and $ V\subseteq Y $ of $ x $ and $ f(x) $, respectively, such that $ f|_O:O\rightarrow O' $ is a diffeomorphism.  
A smooth map $ f:X\rightarrow Y $ is a \textbf{diffeological \'{e}tale map} if the pullback $ P^{*}f $ by every plot $ P $ in $ X $ is  \'{e}tale. 
\end{Definition}
\subsection{On Fr\"olicher spaces}
\begin{Definition} $\bullet$ A \textbf{Fr\"olicher} space is a triple
$(X,\F,\C)$ such that

- $\C$ is a set of paths $\R\rightarrow X$,

- A function $f:X\rightarrow\R$ is in $\F$ if and only if for any
$c\in\C$, $f\circ c\in C^{\infty}(\R,\R)$;

- A path $c:\R\rightarrow X$ is in $\C$ (i.e. is a \textbf{contour})
if and only if for any $f\in\F$, $f\circ c\in C^{\infty}(\R,\R)$.

\vskip 5pt $\bullet$ Let $(X,\F,\C)$ and $(X',\F',\C')$ be two
Fr\"olicher spaces, a map $f:X\rightarrow X'$ is \textbf{differentiable}
(=smooth) if and only if one of the following equivalent conditions is fulfilled:
\begin{itemize}
\item $\F'\circ f\circ\C\subset C^{\infty}(\R,\R)$
\item $f \circ \C \subset \C'$
\item $\F'\circ f \subset  \F$ 
\end{itemize}
\end{Definition}

Any family of maps $\F_{g}$ from $X$ to $\R$ generate a Fr\"olicher
structure $(X,\F,\C)$, setting \cite{KM}:

- $\C=\{c:\R\rightarrow X\hbox{ such that }\F_{g}\circ c\subset C^{\infty}(\R,\R)\}$

- $\F=\{f:X\rightarrow\R\hbox{ such that }f\circ\C\subset C^{\infty}(\R,\R)\}.$

One easily see that $\F_{g}\subset\F$. This notion will be useful
in the sequel to describe in a simple way a Fr\"olicher structure.
A Fr\"olicher space carries a natural topology,
which is the pull-back topology of $\R$ via $\F$. In the case of
a finite dimensional differentiable manifold, the underlying topology
of the Fr\"olicher structure is the same as the manifold topology. In
the infinite dimensional case, these two topologies differ very often.

Let us now compare Fr\"olicher spaces with diffeological spaces, with the following diffeology {{}$\p_\infty(\F)$} called "nebulae":
{{}
{Let }$U${ be an open subset of a Euclidean space; } $$\p_\infty(\F)_U=
\coprod_{n\in\N}\{\, f : U \rightarrow X; \, \F \circ f \subset C^\infty(U,\R) \quad \hbox{(in
the usual sense)}\}$$
and 
$$ \p_\infty(\F) = \bigcup_U \p_\infty(\F)_U,$$
where the latter union is extended over all open sets $U \subset \R^n$ for $n \in \N^*.$ 
}
With this construction, we get a natural diffeology when
$X$ is a Fr\"olicher space. In this case, one can easily show the following:
\begin{Proposition} \label{Frodiff} \cite{Ma2006-3} 
Let $(X,\F,\C)$
and $(X',\F',\C')$ be two Fr\"olicher spaces. A map $f:X\rightarrow X'$
is smooth in the sense of Fr\"olicher if and only if it is smooth for
the underlying nebulae diffeologies. \end{Proposition}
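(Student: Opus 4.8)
The plan is to prove the two implications separately, the whole argument being an unwinding of the two definitions once one notices that the contours of a Fr\"olicher space are exactly the one-dimensional plots of its nebulae diffeology $\p_\infty(\F)$, and that the elements of $\F$ are precisely the scalar functions that detect plots. Throughout I would use the characterization of Fr\"olicher smoothness in the equivalent forms $\F'\circ f\subset\F$ and $f\circ\C\subset\C'$, choosing whichever matches the direction being proved.

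First I would treat the forward implication. Assume $f$ is Fr\"olicher smooth, so that $\F'\circ f\subset\F$. Given a plot $g:U\to X$ of $\p_\infty(\F)$, by definition $\F\circ g\subset C^\infty(U,\R)$. To show $f\circ g\in\p_\infty(\F')$, I must check that $\phi'\circ(f\circ g)\in C^\infty(U,\R)$ for every $\phi'\in\F'$. But $\phi'\circ f\in\F$ by hypothesis, and then $\phi'\circ(f\circ g)=(\phi'\circ f)\circ g$ lies in $C^\infty(U,\R)$ because $g$ is a plot for $\p_\infty(\F)$. Hence $f\circ g$ is a plot of $\p_\infty(\F')$, and $f$ is diffeologically smooth.

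Conversely, I would assume $f$ smooth for the nebulae diffeologies and recover Fr\"olicher smoothness in the form $f\circ\C\subset\C'$. The key step is that any contour $c\in\C$, i.e. any $c:\R\to X$ with $\F\circ c\subset C^\infty(\R,\R)$, is itself a plot of $\p_\infty(\F)$, since $\R$ is an open subset of a one-dimensional Euclidean space. Diffeological smoothness then gives $f\circ c\in\p_\infty(\F')$, that is $\F'\circ(f\circ c)\subset C^\infty(\R,\R)$, which is exactly the defining condition for $f\circ c$ to be a contour of $X'$. Thus $f\circ\C\subset\C'$, and $f$ is smooth in the sense of Fr\"olicher.

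I expect no serious obstacle here: both directions are purely formal manipulations of compositions, and the only genuine content is the observation that one-dimensional plots already suffice to detect Fr\"olicher smoothness, so that restricting diffeological smoothness to contours loses no information. The point requiring a little care is the bookkeeping with domains --- making sure that a contour really qualifies as a plot of the nebulae diffeology (its domain $\R$ being open in $\R$) and that the definitions of $\p_\infty(\F)$ and of Fr\"olicher differentiability are invoked in their matching ``$\F\circ(\cdot)\subset C^\infty$'' formulations, which is precisely what makes the translation between the two notions of smoothness transparent.
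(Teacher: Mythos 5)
Your proof is correct. The paper states this proposition without proof (it is cited from an external reference and described as something ``one can easily show''), so there is no internal argument to compare against; your two-directional unwinding --- using $\F'\circ f\subset\F$ to push plots forward via $\phi'\circ(f\circ g)=(\phi'\circ f)\circ g$, and observing that contours are exactly the $1$-dimensional plots of the nebulae diffeology so that diffeological smoothness forces $f\circ\C\subset\C'$ --- is precisely the intended standard argument, and it is complete.
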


Thus, we can also state intuitively:
\vskip 12pt
\begin{tabular}{ccccc}
smooth manifold  & $\Rightarrow$  & Fr\"olicher space  & $\Rightarrow$  & Diffeological space\tabularnewline
\end{tabular}
\vskip 12pt
With this construction, any complete locally convex topological vector space is a diffeological vector space, that is, a vector space for which addition and scalar multiplication is smooth. The same way, any finite or infinite dimensional manifold $X$ has a nebulae diffeology, which fully determines smooth functions from or with values in $X.$ We now finish the comparison of the notions of diffeological and Fr\"olicher 
space following mostly \cite{Ma2006-3,Wa}, see, e.g., \cite{Ma2020-3}:

\begin{Theorem} \label{compl-fro}
Let $(X,\p)$ be a diffeological space. There exists a unique Fr\"olicher structure
$(X, \F_\p, \C_\p)$ on $X$ such that for any Fr\"olicher structure $(X,\F,\C)$ on $X,$ these two equivalent conditions are fulfilled:

(i)  the canonical inclusion is smooth in the sense of Fr\"olicher $(X, \F_\p, \C_\p) \rightarrow (X, \F, \C)$

(ii) the canonical inclusion is smooth in the sense of diffeologies $(X,\p) \rightarrow (X, \p_\infty(\F)).$ 

\noindent Moreover, $\F_\p$ is generated by the family 
$$\F_0=\lbrace f : X \rightarrow \R \hbox{ smooth for the 
usual diffeology of } \R \rbrace.$$
{{} We call \textbf{Fr\"olicher completion} of $\p$ the Fr"olicher structure $(X, \F_\p, \C_\p).$}
\end{Theorem}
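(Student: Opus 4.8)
The plan is to construct $(X, \F_\p, \C_\p)$ explicitly out of $\F_0$, to identify its functional part with $\F_0$ by means of Boman's theorem, and then to observe that conditions (i) and (ii) unwind to one and the same inclusion of function sets. First I would build the candidate: apply the generation procedure recalled just before Proposition \ref{Frodiff} to the family $\F_g = \F_0$, that is, set $\C_\p = \{ c : \R \to X \mid \F_0 \circ c \subset C^\infty(\R,\R) \}$ and then $\F_\p = \{ f : X \to \R \mid f \circ \C_\p \subset C^\infty(\R,\R) \}$. By that procedure this is a genuine Fr\"olicher structure and $\F_0 \subset \F_\p$, which already yields existence together with the ``moreover'' clause (the generated structure is generated by $\F_0$).

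The crux is the lemma $\F_\p = \F_0$. The inclusion $\F_0 \subset \F_\p$ is built in. For the reverse, take $f \in \F_\p$ and an arbitrary plot $P : U \to X$ of $\p$ with $U \subset \R^n$ open; I must show $f \circ P \in C^\infty(U,\R)$, which is exactly the statement $f \in \F_0$. Here I would invoke Boman's theorem: a real map on an open subset of $\R^n$ is smooth if and only if its composite with every smooth curve $\R \to U$ is smooth. For any smooth curve $\gamma : \R \to U$ the path $P \circ \gamma : \R \to X$ lies in $\C_\p$, since for every $g \in \F_0$ the composite $g \circ (P \circ \gamma) = (g \circ P) \circ \gamma$ is smooth ($g \circ P$ is smooth because $g \in \F_0$ and $P \in \p$, and $\gamma$ is smooth). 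As $f \in \F_\p$, this gives $(f \circ P) \circ \gamma = f \circ (P \circ \gamma) \in C^\infty(\R,\R)$ for every such $\gamma$, whence $f \circ P$ is smooth by Boman's theorem, i.e. $f \in \F_0$.

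Next I would rewrite the two conditions via the smoothness criteria in the definitions. For any Fr\"olicher structure $(X,\F,\C)$, condition (i), the smoothness of $\mathrm{id} : (X,\F_\p,\C_\p) \to (X,\F,\C)$, is equivalent through the criterion $\F' \circ f \subset \F$ to the inclusion $\F \subset \F_\p$. Condition (ii), the smoothness of $\mathrm{id} : (X,\p) \to (X,\p_\infty(\F))$, unwinds to: every plot $P$ of $\p$ remains a plot of $\p_\infty(\F)$, i.e. $f \circ P$ is smooth for all $f \in \F$ and all plots $P$, i.e. $\F \subset \F_0$. By the lemma $\F_\p = \F_0$ the two reformulated conditions are literally identical, so they are equivalent. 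For uniqueness I would use that a Fr\"olicher structure is determined by its functional part (its contours are forced as the paths along which all of $\F$ is smooth), so it suffices to pin down $\F_\p$. If $(X,\F_1,\C_1)$ has the same property, then for every Fr\"olicher function set $\F$ one has $\F \subset \F_1 \iff \F \subset \F_0$; testing with $\F = \F_1$ gives $\F_1 \subset \F_0$, and testing with $\F = \F_0$ (a legitimate Fr\"olicher function set by the construction above) gives $\F_0 \subset \F_1$, so $\F_1 = \F_0 = \F_\p$ and the two structures coincide.

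The main obstacle is precisely the lemma $\F_\p = \F_0$: a priori membership in $\F_\p$ only controls $f$ along one-dimensional contours, whereas diffeological smoothness (membership in $\F_0$) is a condition over plots of every dimension, and Boman's theorem is exactly the bridge upgrading curve-wise smoothness to genuine smoothness on open subsets of $\R^n$. Everything else is a formal unwinding of the three equivalent characterizations of Fr\"olicher-smoothness and of the definition of $\p_\infty(\F)$.
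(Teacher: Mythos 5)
Your proof is correct, and it follows the standard route: the paper itself states this theorem without proof, deferring to the cited references \cite{Ma2006-3,Wa}, where the argument is exactly the one you give — generate the structure from $\F_0$, use Boman's theorem to upgrade smoothness along the contours $\C_\p$ to smoothness along arbitrary plots (so that $\F_\p=\F_0$), and then unwind conditions (i) and (ii) to the single inclusion $\F\subset\F_0$. Your identification of the lemma $\F_\p=\F_0$ as the only non-formal step, with Boman's theorem as the bridge from curve-wise to plot-wise smoothness, matches the paper's own remark that this circle of results ``is based on \cite[p.26, Boman's theorem]{KM}''; the uniqueness argument via testing the universal property against $\F=\F_1$ and $\F=\F_0$ is also the expected one.
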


\begin{Definition} \cite{Wa}
A \textbf{reflexive} diffeological space is a diffeological space $(X,\p)$ such that $\p = \p_\infty(\F_\p).$
\end{Definition}

\begin{Theorem} \cite{Wa}
The category of Fr\"olicher spaces is exactly the category of reflexive diffeological spaces.
\end{Theorem}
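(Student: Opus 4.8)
The plan is to exhibit a pair of mutually inverse functors realizing the asserted equality of categories, with the nontrivial content concentrated in a single identity relating the nebulae construction and the Fr\"olicher completion.

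First I would set up the two assignments. To a Fr\"olicher space $(X,\F,\C)$ I associate the diffeological space $(X,\p_\infty(\F))$, and to a diffeological space $(X,\p)$ I associate its Fr\"olicher completion $(X,\F_\p,\C_\p)$ from Theorem \ref{compl-fro}. On morphisms both assignments act as the identity on underlying maps, which is legitimate because Proposition \ref{Frodiff} says that a map between Fr\"olicher spaces is Fr\"olicher-smooth precisely when it is smooth for the underlying nebulae diffeologies; hence the two Hom-sets coincide as sets of maps. Thus, once the object-level bijection is established, fullness and faithfulness are automatic, and I obtain an isomorphism of categories rather than a mere equivalence.

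The heart of the argument is the identity $\F_{\p_\infty(\F)}=\F$ for every Fr\"olicher space $(X,\F,\C)$. By Theorem \ref{compl-fro}, $\F_{\p_\infty(\F)}$ is generated by $\F_0$, the set of real functions on $X$ that are smooth for $\p_\infty(\F)$, so it suffices to prove $\F_0=\F$ and then invoke the mutual duality built into a Fr\"olicher structure, namely that $\F$ and $\C$ determine each other. For $\F\subseteq\F_0$: if $f\in\F$, then for any plot $P\in\p_\infty(\F)$ one has $\F\circ P\subset C^\infty$ by definition of the nebulae diffeology, hence $f\circ P\in C^\infty$, so $f$ is $\p_\infty(\F)$-smooth. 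For $\F_0\subseteq\F$: the key observation is that every contour $c\in\C$, viewed as a one-dimensional parametrization $\R\to X$, is itself a plot of $\p_\infty(\F)$, since $\F\circ c\subset C^\infty$ is exactly the defining condition; therefore any $f\in\F_0$ satisfies $f\circ c\in C^\infty$ for all $c\in\C$, i.e. $f\circ\C\subset C^\infty$, which is precisely the condition $f\in\F$. From $\F_0=\F$ and duality I then get $\F_{\p_\infty(\F)}=\F$ and $\C_{\p_\infty(\F)}=\C$, so the completion of $(X,\p_\infty(\F))$ is exactly $(X,\F,\C)$. In particular $\p_\infty(\F_{\p_\infty(\F)})=\p_\infty(\F)$, which shows that the diffeology produced from any Fr\"olicher space is reflexive.

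It remains to close the loop on the diffeological side and assemble the functors. If $(X,\p)$ is reflexive, then by definition $\p=\p_\infty(\F_\p)$, so applying completion and then the nebulae construction returns $(X,\p)$ unchanged; combined with the previous paragraph, this gives mutually inverse bijections between the objects of the two categories. Together with the identification of morphisms via Proposition \ref{Frodiff}, this yields the claimed isomorphism of categories. The hard part will be the inclusion $\F_0\subseteq\F$: it rests entirely on recognizing contours as one-dimensional plots of the nebulae diffeology and on the fact that a Fr\"olicher structure is pinned down by its functions through the duality $\F\leftrightarrow\C$, so that testing $\p_\infty(\F)$-smooth functions against contours alone already detects membership in $\F$.
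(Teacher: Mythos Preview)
The paper does not supply its own proof of this theorem; it merely cites \cite{Wa} and moves on. So there is no in-paper argument to compare against.

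That said, your proposal is correct and is essentially the standard argument one would expect. The key step---showing $\F_{\p_\infty(\F)}=\F$ by testing $\p_\infty(\F)$-smooth real functions against contours, which are themselves one-dimensional nebulae plots---is exactly right, and the closure on the reflexive side via $\p=\p_\infty(\F_\p)$ is immediate from the definition. One minor remark: when you invoke Proposition~\ref{Frodiff} to identify Hom-sets, note that it literally applies to maps between Fr\"olicher spaces and their nebulae diffeologies; for the reverse functor (reflexive diffeological to Fr\"olicher) you are implicitly using that reflexivity gives $\p=\p_\infty(\F_\p)$ first, so that Proposition~\ref{Frodiff} then applies. You do say this later, but it would be cleaner to establish the object-level bijection before discussing morphisms, rather than after.
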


This last theorem allows us to make no difference between Fr\"olicher spaces and reflexive diffeological spaces. 
We shall call them Fr\"olicher spaces, even when working with their underlying diffeologies.
\vskip 12pt A deeper analysis of these implications has been given
in \cite{Wa}. The next remark is inspired on this work and on
\cite{Ma2006-3}; it is based on \cite[p.26, Boman's theorem]{KM}. For this, we have to define the 1 dimensional diffeology $\p_1(\F),$ also called ``spaghetti diffeology'' in \cite{CW2022}, made of plots $ P \in \p_\infty(\F)$ which factor smoothly through $\R:$
$$ P : U\subset \R^n \rightarrow \R \rightarrow X.$$
This is also the diffeology minimal for inclusion which contains the set of contours $\C.$  
\begin{rem}
We notice that the set of contours $\C$ of the Fr\"olicher space
$(X,\F,\C)$ \textbf{does not} give us a diffeology, because a diffelogy
needs to be stable under restriction of domains. In the case of paths in
$\C$ the domain is always $\R.$ However, $\C$ defines a ``minimal diffeology''
$\p_1(\F)$ whose plots are smooth parameterizations which are locally of the
type $c \circ g,$ where $g \in \p_\infty(\R)$  and $c \in \C.$ Within this setting,
a  map $f : (X,\F,\C) \rightarrow (X',\F',\C')$ is smooth if and only if it is smooth  $(X,\p_\infty(\F)) \rightarrow (X',\p_\infty(\F')) $ or equivalently smooth  .$(X,\p_1(\F)) \rightarrow (X',\p_1(\F')) $ 
\end{rem}
We apply the results on product diffeologies to the case of Fr\"olicher spaces and we derive very easily, (compare with, e.g., \cite{KM}) the following:

\begin{Proposition} \label{prod2} Let $(X,\F,\C)$
and $(X',\F',\C')$ be two Fr\"olicher spaces equipped with their natural
diffeologies $\p$ and $\p'$ . There is a natural structure of Fr\"olicher space
on $X\times X'$ which contours $\C\times\C'$ are the 1-plots of
$\p\times\p'$. \end{Proposition}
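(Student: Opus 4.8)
The plan is to build the natural Fr\"olicher structure on $X \times X'$ from a generating family of scalar functions, and then to read off both its contours and the one-dimensional plots of $\p \times \p'$ by unravelling the definitions, using the generating construction for Fr\"olicher spaces recalled after Proposition \ref{Frodiff}.

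First I would set
$$\F_0 = \{ f \circ \pi_1 \mid f \in \F \} \cup \{ f' \circ \pi_2 \mid f' \in \F' \},$$
where $\pi_1 : X \times X' \to X$ and $\pi_2 : X \times X' \to X'$ are the projections, and let $(X \times X', \F_\times, \C_\times)$ be the Fr\"olicher structure generated by $\F_0$. By that construction its contours are exactly $\C_\times = \{ c : \R \to X \times X' \mid \F_0 \circ c \subset C^\infty(\R,\R) \}$. Writing $c = (c_1, c_2)$ with $c_1 = \pi_1 \circ c$ and $c_2 = \pi_2 \circ c$, one has $\F_0 \circ c = (\F \circ c_1) \cup (\F' \circ c_2)$, so $c \in \C_\times$ if and only if $\F \circ c_1 \subset C^\infty(\R,\R)$ and $\F' \circ c_2 \subset C^\infty(\R,\R)$, that is, if and only if $c_1 \in \C$ and $c_2 \in \C'$. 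Identifying a path into the product with the pair of its components, this gives $\C_\times = \C \times \C'$.

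Next I would match these contours with the one-dimensional plots of $\p \times \p' = \p_\infty(\F) \times \p_\infty(\F')$, where a one-dimensional plot is read as a plot with domain $\R$. By the definition of the product diffeology, such a parametrization $c$ lies in $\p \times \p'$ precisely when $\pi_1 \circ c \in \p$ and $\pi_2 \circ c \in \p'$; and a plot of the nebulae diffeology $\p_\infty(\F)$ defined on $\R$ is, by the definition of $\p_\infty$, a map $c_1$ with $\F \circ c_1 \subset C^\infty(\R,\R)$, i.e. exactly an element of $\C$ (every $1$-plot on an open subset of $\R$ being locally of this form by the locality axiom). Hence the one-dimensional plots of $\p \times \p'$ defined on $\R$ are exactly the pairs $(c_1,c_2)$ with $c_1 \in \C$ and $c_2 \in \C'$, which by the previous paragraph is $\C_\times = \C \times \C'$. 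This is the assertion.

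The individual computations are routine, so the only real point to check is that $\F_0$ genuinely generates a Fr\"olicher structure whose contour set is the one cut out by $\F_0$ — equivalently, that passing to the saturated function set $\F_\times$ and back does not enlarge $\{ c \mid \F_0 \circ c \subset C^\infty(\R,\R) \}$. The hard part, such as it is, is therefore this closure property of the generating construction, which is precisely the standard fact recalled from \cite{KM}; once it is granted, everything else is definition-chasing, in accordance with the ``very easily'' in the text.
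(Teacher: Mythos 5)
Your argument is correct and follows exactly the route the paper indicates (the paper offers no written proof, merely asserting the result follows "very easily" from the product diffeology and the generating-family construction of \cite{KM}): you generate the product Fr\"olicher structure from the pulled-back functions $\F\circ\pi_1\cup\F'\circ\pi_2$, identify its contours as the pairs $(c_1,c_2)\in\C\times\C'$, and match these with the $\R$-domained plots of $\p\times\p'$ via the definitions of the product and nebulae diffeologies. The closure property you flag at the end is indeed the only non-trivial ingredient, and it is exactly the standard fact about generated Fr\"olicher structures recalled in the preliminaries, so nothing is missing.
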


We can even state the result above for the case of infinite products;
we simply take cartesian products of the plots or of the contours.
We also remark that given an algebraic structure, we can define a
corresponding compatible diffeological structure. For example, a
$\R-$vector space equipped with a diffeology is called a
diffeological vector space if addition and scalar multiplication
are smooth (with respect to the canonical diffeology on $\R$), see \cite{Igdiff}. An
analogous definition holds for Fr\"olicher vector spaces. Other
examples will arise in the rest of the text.

\begin{rem} \label{comp}
Fr\"olicher, $c^\infty$ and G\^ateaux smoothness are the same notion
if we restrict to a Fr\'echet context, see \cite[Theorem 4.11]{KM}.
Indeed, for a smooth map $f : (F, \p_1(F)) \rightarrow \R$ defined
on a Fr\'echet space with its 1-dimensional diffeology, we have
that $\forall (x,h) \in F^2,$ the map $t \mapsto f(x + th)$ is
smooth as a classical map in $\C^\infty(\R,\R).$ And hence, it is
G\^ateaux smooth. The converse is obvious.
\end{rem}

\subsection{The tangent space of interest for us}
The question of tangent spaces is not yet solved for diffeological spaces. Indeed, even if there exists actually a shared notion of cotangent space of a diffeological space, the multiple possible generalizations of the notion of tangent space to a diffeological space appear as non-equivalent and many of them are well-motivated by their applications. 
A non-exhaustive list of tangent spaces is the following:
\begin{enumerate}
\item \label{iT} The tangent cone defined by \cite{Les} where tangent elements are germs of paths on the diffeological space, identified through tangent plots.
\item  The tangent cone defined by \cite{Ma2013} for Fr\"olicher spaces and extended in \cite{GMW2023} to any diffeological space, where tangent elements are germs of paths on the diffeological space, understood as local derivations of smooth $\R-$valued functions.This tangent cone can be different from the first one. 
\item One can consider linear combinations in each of these tangent cones, along the line of \cite{CW2014} and \cite{GW2022} respectively for each context.
\item the Diff-tangent space defined in \cite{Ma2020-3} for Fr\"olicher spaces, where tangent element are evaluations at one point of tangent vector fields, understood as elements of the internal tangent space at the identity of the group of diffeomorphisms.
\end{enumerate}
For an extensive review of these constructions, we refer to \cite{GMW2023}. 
Since we need to evaluate differentials on tangent vectors, we choose the definition (\ref{iT}) for the tangent space.
More precisely, 
we consider the set of all paths that maps $ 0 $ to $ x $ and identify two of these paths with each other if they coincide on tangent spaces of plots.
The domain $\mathrm{dom}(P)$ of each plot $P$ of $\p$ can be considered as a smooth manifold, and first objects of interest are tangent vectors in the tangent space $T\mathrm{dom}(P).$ They are understood as germs of smooth path $\frac{d\gamma}{dt}|_{t=0}$ where $\gamma \in C^\infty(\R,\mathrm{dom}(P)).$  Let $x \in X$ and let us consider 
$$\mathcal{C}_x = \left\{ \gamma \in C^\infty(\R,X) \, | \, \gamma(0)=x\right\}.$$ For each $P \in \p,$ we also define
$$\mathcal{C}_{x,P} = \left\{\gamma \in C^\infty(\R,\mathrm{dom}(P)) \, | \, P \circ \gamma(0) = x\right\}.$$ This set of smooth paths passing at $x$ enables to define the kinematic set 

$$\mathcal{K}_x = \coprod_{P \in \p} \left\{ \frac{d \gamma}{dt}|_{t=0} \, | \, \gamma \in \C_{x,P} \right\} = \coprod_{p \in \p} \coprod_{x_0 \in p^{-1}(x)} T_{x_0}\mathrm{dom}(P).$$

Therefore, we identify $(X_1,X_2) \in \mathcal{K}_x^2, $ where $X_1 = \frac{d \gamma_1}{dt}|_{t=0} \in T_{x_1}\mathrm{dom}(P_1)$ and  $X_2 = \frac{d \gamma_2}{dt}|_{t=0} \in T_{x_2}\mathrm{dom}(P_2)$ if there exists a $P_{3} \in \p$ and $(\gamma_{3,1},\gamma_{3,2}) \in C_{x,P_3}$ such that
\begin{equation} \label{id-germs}  \left\{\begin{array}{l} \forall i \in \{1,2\}, P_i \circ \gamma_i = P_{3,i} \circ \gamma_{3,i} \\
\frac{d\gamma_{3,1}}{dt}|_{t=0} = \frac{d\gamma_{3,2}}{dt}|_{t=0}  
\end{array}\right.\end{equation}
This identification is reflexive, symmetric, but not transitive, as is shown in the following counter-example:
\begin{example} \label{spagh}
Let $$ X= \{(x,y,z) \in \R^3 \, | \, yz = 0\}.$$
We equip $X$ with its subset diffeology, inherited from the nebulae diffeology of $\R^3.$ Let us conside the paths $$\gamma_1(t) = (t,t^2,0)$$ and $$\gamma_2(t)=(t,0,t^2).$$
The natural intuition (which will be shown to have a defect later in the exposition, for another diffeology on $X$) says that $$\frac{d\gamma_{1}}{dt}|_{t=0} = \frac{d\gamma_{2}}{dt}|_{t=0} = (1,0,0),$$
\emph{but there is no parametrization $P_3$ at $x=(0,0,0)$ that fulfills (\ref{id-germs}).}
Indeed, the diffeology of $X$ is generated by the push-forwards of the plots of $\R^2$ to $X$ by the maps $(x,y) \mapsto (x,y,0) $ and $(x,y \mapsto (x,0,y).$ In order to identify the two germs, one has to consider an intermediate path $\gamma_{1.5}(t) = (t,0,0).$ 

\end{example}
Therefore, we define 

\begin{Definition}
We define the equivalence relation $\sim$ on $\mathcal{K}_x$ as follows: 
$\forall (X_1,X_2) \in \mathcal{K}_x, u_1 \sim u_2$ if and only if one of the two following conditions is fulfilled: 
\begin{enumerate}
\item $$\exists (\gamma_1,\gamma_2) \in \left(\coprod_{P \in \p}\C_{x,P}\right)^2,  \frac{d\gamma_{1}}{dt}|_{t=0} = u_1 \in T\mathrm{dom}(P_1), \frac{d\gamma_{2}}{dt}|_{t=0} = u_2 \in T\mathrm{dom}(P_2)$$ and also $P_{3} \in \p$ and $(\gamma_{3,1},\gamma_{3,2}) \in C_{x,P_3}$ such that condition (\ref{id-germs}) applies. 
\item there exists a finite sequence $(v_1,...,v_k)\in \mathcal{K}_x^k$ such that $v_1 = u_1,$ $v_k = u_2,$ and such that condition (1) applies to $v_i$ and $v_{i+1}$ for each index $i \in \N_{k-1}.$
\end{enumerate} 

\end{Definition}
{\begin{Definition}
The  \textbf{internal}  or \textbf{kinetic tangent cone} of $X$ at $x \in X$ is defined by $${}^iT_xX = \mathcal{K}_x/\sim,$$
The space ${}^iT_xX$ is endowed by the push-forward of the functional diffeology on $\mathcal{C}_x.$
\end{Definition}}
The elements of ${}^iT_xX$ are called \textbf{germs of paths} on $X$ at $x.$ 

\section{The weak power set diffeology}\label{wpsdiff}
Let us begin with the first type of diffeologies on the power set of a diffeological space.
\begin{Definition}
Let  $ X $  be a diffeological space.  The set of parametrizations 
$ P:U\rightarrow\mathfrak{P}(X) $ with the following property is a diffeology on $  \mathfrak{P}(X) $, called the \textbf{weak power set diffeology}:
\begin{enumerate}
\item[\textbf{WPD.}]
For every $ r_0  \in U $, if $ P(r_0)\neq\emptyset $, there exists an open neighborhood $ V \subset U $ of $ r_0  $
and a plot $ \sigma: V \rightarrow X $, called \textbf{local selection plot}, such that $ \sigma(r) \in P(r) $, 
for every $ r \in V $.
\end{enumerate} 
We denote by $ \mathfrak{P}_{w}(X) $  the power set	$  \mathfrak{P}(X) $ endowed with the weak power set diffeology and call it the \textbf{weak power space}.
\end{Definition}

\begin{rem}
The weak power set diffeology differs from the ``minimal power set diffeology" given in \cite[Exercise 32]{Igdiff}.
In fact, the minimal power set diffeology is finer than the weak power set diffeology. For instance, the plot $ P:\mathbb{R}\rightarrow\mathfrak{P}_{w}(X) $ in the weak power space, with $ r\in\mathbb{R}-\{0\}\mapsto X $ and $ 0\mapsto  \emptyset $, is not a plot with respect to the minimal power set diffeology. Because, there is no neighborhood $ V\subset\mathbb{R} $ of $ 0 $ such that $ P|_V $ is constant with the value $ \emptyset $. But if we restrict ourselves to the set $\mathfrak{P}^\star(X)$ of nonempty subsets of $ X $, these two diffeologies coincide.
\end{rem}
\begin{Proposition}\label{pro: weak-immr}
Let  $ X $  be a diffeological space. 
The canonical map $ \imath:X\rightarrow\mathfrak{P}_{w}(X) $ defined by $ \imath(x)=\{x\} $ is a diffeological immersion. In particular, $ \mathrm{dim}(X) \leq \mathrm{dim}(\mathfrak{P}_{w}(X)) $.
\end{Proposition}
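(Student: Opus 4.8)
The plan is to establish the two assertions in the order they are stated: first that $\imath$ is smooth, then that it satisfies the pullback condition defining a diffeological immersion, and finally to read off the dimension inequality. Every step is driven by the local selection plots provided by condition \textbf{WPD}. Smoothness is immediate: for any plot $P\colon U\to X$, the parametrization $\imath\circ P\colon r\mapsto\{P(r)\}$ satisfies \textbf{WPD} on all of $U$, since $\{P(r_0)\}\neq\emptyset$ for every $r_0$ and $P$ itself serves as a local selection plot because $P(r)\in\{P(r)\}$. Hence $\imath\circ P$ is a plot of $\mathfrak{P}_w(X)$, and $\imath$ is smooth.

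For the immersion property I would fix a plot $P\colon U\to\mathfrak{P}_w(X)$ and a point $(r_0,x_0)$ in the pullback $P^*X=\{(r,x)\in U\times X\mid P(r)=\{x\}\}$, so that $P(r_0)=\imath(x_0)=\{x_0\}$. The observation that makes everything work is that on $P^*X$ the $X$-coordinate is uniquely determined by the $U$-coordinate: if $P(r)=\{x\}=\{x'\}$ then $x=x'$. Since $P(r_0)\neq\emptyset$, condition \textbf{WPD} supplies an open neighbourhood $V\subseteq U$ of $r_0$ and a selection plot $\sigma\colon V\to X$ with $\sigma(r)\in P(r)$ for all $r\in V$. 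I would then define $\rho\colon V\to U\times X$ by $\rho(r)=(r,\sigma(r))$, which is smooth because the inclusion $V\hookrightarrow U$ is smooth and $\sigma$ is a plot (product diffeology), and take $O=(P^*f)^{-1}(V)$, a D-open neighbourhood of $(r_0,x_0)$ in $P^*X$ whose image under $P^*f$ lies in $V$. For $(r,x)\in O$ one has $P(r)=\{x\}$, so $\sigma(r)\in\{x\}$ forces $\sigma(r)=x$, and therefore $\rho\big(P^*f(r,x)\big)=\rho(r)=(r,x)$, which is exactly the local retraction required.

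The conceptual heart of the argument, and the only point needing care, is precisely this identification $\sigma(r)=x$ on $O$. A priori \textbf{WPD} guarantees merely that $\sigma$ selects \emph{some} element of $P(r)$; but because $P$ takes singleton values on the relevant set, the otherwise arbitrary selection is pinned down to the unique retraction. In other words, \textbf{WPD} hands us the smooth local left inverse for free, and no further construction is needed.

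For the dimension inequality I would argue in the same spirit. Given any covering generating family $\mathcal{G}$ of $\mathfrak{P}_w(X)$, form the family $\mathcal{G}'$ of all local selection plots extracted via \textbf{WPD} from the plots of $\mathcal{G}$ at points of nonempty value. Each such selection has domain an open subset of the domain of some $G\in\mathcal{G}$, so $\mathrm{dim}(\mathcal{G}')\le\mathrm{dim}(\mathcal{G})$; the family $\mathcal{G}'$ covers $X$ (a selection through a value $\{x\}$ of $G$ passes through $x$) and generates $\p_X$, since for any plot $Q$ of $X$ smoothness of $\imath$ gives $\imath\circ Q\in\langle\mathcal{G}\rangle$, hence locally $\{Q(r)\}=G(F(r))$ with $G\in\mathcal{G}$ and $F$ smooth, and composing a selection of $G$ with $F$ recovers $Q$ locally. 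Thus $\mathrm{dim}(X)\le\mathrm{dim}(\mathcal{G}')\le\mathrm{dim}(\mathcal{G})$ for every $\mathcal{G}$, and taking the infimum over $\mathsf{CGF}(\mathfrak{P}_w(X))$ yields $\mathrm{dim}(X)\le\mathrm{dim}(\mathfrak{P}_w(X))$. I note that this same computation shows $\imath$ to be an induction onto the subspace of singletons, so the inequality may equivalently be phrased as the statement that passing to this subspace does not raise dimension.
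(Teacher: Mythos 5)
Your proof of the immersion property is essentially identical to the paper's: the same local selection plot $\sigma$ from \textbf{WPD}, the same retraction $\rho(r)=(r,\sigma(r))$ on $O=P^*X\cap(V\times X)$, and the same observation that singleton values force $\sigma(r)=x$. The only difference is that the paper treats the dimension inequality as an immediate consequence of the immersion property (via the results quoted from \cite{ARA}), whereas you supply a direct and correct argument by extracting selection plots from a covering generating family of $\mathfrak{P}_{w}(X)$; this is a nice self-contained alternative, and your closing remark that the same computation shows $\imath$ is an induction is also correct.
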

\begin{proof} 
It is clear that $ \imath $ is smooth.
Let $ P:U\rightarrow \mathfrak{P}_{w}(X) $ be a plot and $ (r_0,x_0)\in P^*X $,
or $ P(r_0)=\imath(x_0)=\{x_0\} $. 
By \textbf{WPD}, for $ r_0\in U $, there exist an open neighborhood $ V \subset U $ of $ r_0  $ and a local selection plot $ \sigma : V \rightarrow X $  such that $ \sigma(r) \in P(r) $, 
for every $ r \in V $.
Let $ O:=P^*X\cap(V\times X) $, which is a D-open neighborhood of $ (r_0,x_0) $ in $ P^*X $, and define $ \rho:V\rightarrow U\times X $ by {$ \rho(r)=(r,\sigma(r)) $}. Then for each $ (r,x)\in O $, we have $ \sigma(r) \in P(r)=\imath(x)=\{x\} $, which yields
\begin{center}
$ \rho\circ P^*\imath(r,x)=\rho(r)=(r,\sigma(r))=(r,x) $.
\end{center}
Therefore,  $ \imath $ is a diffeological immersion.
\end{proof}

\begin{rem} 
Similar to \cite[Exercise 32]{Igdiff}, one can observe that for any equivalence relation $ R $ over $ X $, the inclusion
$ X/R \hookrightarrow \mathfrak{P}_{w}(X) $
is an induction, where  $ X/R $ is endowed with
the quotient diffeology. However, the  finest diffeology on $ \mathfrak{P}(X) $ with this property has the following description.
Let $ \mathsf{EqRel}(X) $ denote the set of all equivalence relations over $ X $.
Consider the sum space $ Y=(\bigsqcup_{R\in \mathsf{EqRel}(X)}X/R) \bigsqcup \{\emptyset\} $,  where  $ X/R $ is endowed with
the quotient diffeology.
In this way, one can define the push-forward diffeology $ \mathcal{D} $ on $ \mathfrak{P}(X) $ through the map $ Y\rightarrow\mathfrak{P}(X) $ taking $ (R,A)\in X/R \mapsto A \in \mathfrak{P}(X) $ and $ \emptyset\mapsto\emptyset $.
More explicitly, a parametrization $ P:U\rightarrow\mathfrak{P}(X) $ is an element of $ \mathcal{D} $ if and only if 
\begin{enumerate}
\item[ $ \bullet $]
for each $ r_0 \in U $, there
exists an open neighborhood $ V \subset U $ of $ r_0 $ such that either
$ P|_V $ is constant with the value $ \emptyset $,
or $ P|_V $ is a plot in $ \mathfrak{P}^{\star}_{w}(X) $ such that
$ P(r)\cap P(r')=\emptyset $ or $ P(r)= P(r')$, for all $ r\neq r'\in V $.
\end{enumerate} 	
In fact,  $ \mathcal{D} $ is the finest diffeology on $ \mathfrak{P}(X) $ with this feature that for any equivalence relation $ R $ over $ X $, the inclusion
$ X/R \hookrightarrow (\mathfrak{P}(X),\mathcal{D}) $
is an induction.
\end{rem}

From the viewpoint of usual topologies, these versions of the power set diffeologies may be too coarse, because it seems to make too many functions smooth in basic contexts as shown in the next example.

\begin{example}
The set-valued map $ \phi:\mathbb{R}\rightarrow \mathfrak{P}_{w}(\mathbb{R}) $ given by
\begin{align*}
\phi(x)= \left\{
\begin{array}{lr}
	\{0,1\}\quad & x\in\mathbb{Q}\\
	\{0\} \quad &   x\notin\mathbb{Q}\\
\end{array} \right.
\end{align*}
is smooth. This map is not lower semi-continuous, because for open subset
$ (\frac{1}{2},\frac{3}{2})\subset\mathbb{R} $,
one has $ \phi^{-1}((\frac{1}{2},\frac{3}{2}))=\{ x\in \mathbb{R} \mid \phi(x)\cap (\frac{1}{2},\frac{3}{2})\neq\emptyset\}=\mathbb{Q} $, which is not open. 
\end{example}	

To avoid such pathological behaviors of the weak power set diffeology, in the next section we introduce and investigate a refinement of this diffeology. 
\section{The union power set diffeology}\label{upsdiff}
In this section, we introduce the second type of diffeologies for the power sets and investigate its properties and behaviors in various aspects.
\begin{Definition}
Let  $ \lbrace P_i: U_i\rightarrow \mathfrak{P}(X)\rbrace_{i\in J} $  be a family of $ n $-parametrizations in $ \mathfrak{P}(X) $. The parametrization
$ \bigcup_{i\in J} P_i:\bigcup_{i\in J} U_i\rightarrow \mathfrak{P}(X) $ defined by
\begin{center}
$ (\bigcup_{i\in J} P_i)(r)=\bigcup\{ P_i(r)\mid i\in J, r\in \mathrm{dom}(P_i) \}, \quad $ for each $ r\in U $, 
\end{center}
is called the \textbf{union} of this family.
By convention, the union of the empty family is the parametrization $ \emptyset\rightarrow \mathfrak{P}(X) $.
\end{Definition}

One can understand the (not necessarily compatible) family $ \lbrace P_i: U_i\rightarrow \mathfrak{P}(X)\rbrace_{i\in J} $ of $ n $-parametrizations with the compatible family 
\begin{center}
$ \Pi_i:U_i\rightarrow \mathfrak{P}(X), \quad r\mapsto\bigcup\{ P_j(r)\mid j\in J, r\in \mathrm{dom}(P_j) \} $.
\end{center}
of $ n $-parametrizations such that the supremum $ \lbrace \Pi_i\rbrace_{i\in J} $ is equal to the union of $ \lbrace P_i\rbrace_{i\in J} $.

If $\lbrace P_i: U_i\rightarrow X\rbrace_{i\in J}$ is a compatible family of $ n $-parametrizations in $ X $ (regarded as those in $ \mathfrak{P}(X) $),  the union parametrization is equal to the supremum  of this family.

\begin{Definition}\label{d:union-diff} 
Let $ \A $ be a collection of parametrizations in $ \mathfrak{P}(X) $  such that
\begin{enumerate}
\item[(a)]  $ \A $ contains 0-parametrizations $ 0\mapsto \emptyset $ and $ 0\mapsto \{x\} $, for all $ x\in X $,

\item[(b)]  $ \A $ is stable under the smooth compatibility condition in Definition \ref{d:diffeology}, i.e.,
for every element $P:U\rightarrow X$ of $ \A $ and every smooth map $ F:V\rightarrow U $ between domains, the composition $ P\circ F$ belongs to $ \A $.
\end{enumerate}
The set of  parametrizations $ P $
that are the union of a family
$ \lbrace P_i\rbrace_{i\in J} $ of parametrizations in $ \mathfrak{P}(X) $ with  $ P_i\in{\A} $ is a diffeology on $ \mathfrak{P}(X) $, called the \textbf{union diffeology}  generated by $ \A $. 
\end{Definition}  
Let $ \A $ be an arbitrary collection of parametrizations in $ \mathfrak{P}(X) $. Extended $ \A $ to $ \overline{\A} $ by
adding  0-parametrizations $ 0\mapsto \emptyset $ and $ 0\mapsto \{x\} $, $ x\in X $, to $ \A $. Composing $ \overline{\A} $ with smooth map $ F:V\rightarrow U $ between domains yields a new collection satisfying the requirements (a) and (b) of Definition \ref{d:union-diff}, which generates a union diffeology.
In particular, a set-valued map $ \phi:X'\rightarrow \mathfrak{P}(X) $ defined on a diffeological space $ X' $ induces a diffeology on $ \mathfrak{P}(X) $ as the union diffeology generated by the collection $ \{\phi\circ P\mid P$ is a plot in  $X' \} $.

\begin{Definition}\label{d:upd}
Let  $ X $  be a diffeological space.  
The \textbf{union power set diffeology} on $ \mathfrak{P}(X) $ is the union diffeology generated by the canonical map $ \imath:X\rightarrow\mathfrak{P}(X)$ defined by $ \imath(x)= \{x\} $.
We denote by $  \mathfrak{P}_{u}(X) $  the power set	$  \mathfrak{P}(X) $ endowed with the union power set diffeology and call it the \textbf{union power space}.
\end{Definition} 	
%{\color{red} ARA: Please write if you prefer a better naming for ``the union power set diffeology", for example, ``the implicit power set diffeology", etc.}
\begin{Proposition}	
Let $ X $ be a diffeological space.
A parametrization 
$ P:U\rightarrow\mathfrak{P}(X) $ is a plot in $  \mathfrak{P}_{u}(X) $ if and only if 
\begin{enumerate}
\item[\textbf{UPD.}] 
for every $ r_0 \in U $ and any initial condition $ x_0\in P(r_0) $, there exist an open neighborhood $ V \subset U $ of $ r_0 $ and a  plot $ \sigma : V \rightarrow X $ in $ X $, called \textbf{local implicit plot}, such that $ \sigma(r) \in P(r) $,  
for every $ r \in V $ and $ \sigma(r_0)=x_0 $.
\end{enumerate}
\end{Proposition}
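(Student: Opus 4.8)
The plan is to unwind the definition of the union diffeology generated by $\imath$ and match it term-by-term with condition \textbf{UPD}. First I would describe the generating collection $\A$ explicitly. By Definition \ref{d:upd} together with the closure requirements (a) and (b) of Definition \ref{d:union-diff}, the collection $\A$ consists precisely of the parametrizations $\imath \circ \sigma$ where $\sigma$ is a plot of $X$ (these already form a family stable under the smooth compatibility condition, since $(\imath \circ \sigma) \circ F = \imath \circ (\sigma \circ F)$ and $\sigma \circ F$ is again a plot of $X$), together with the constant parametrizations $r \mapsto \emptyset$ on arbitrary open domains $V \subset \R^n$ (obtained from the $0$-parametrization $0 \mapsto \emptyset$ by precomposition with the smooth map $V \to \R^0$). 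Consequently a plot of $\mathfrak{P}_u(X)$ is, by definition, exactly a union $P = \bigcup_{i \in J} P_i$ of a family in which every $P_i$ is either of the form $\imath \circ \sigma_i$ with $\sigma_i : U_i \to X$ a plot, or an empty-valued constant parametrization.

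For the direct implication, I would start from such a union $P = \bigcup_{i \in J} P_i$ and fix $r_0 \in U$ with $x_0 \in P(r_0)$. Since $P(r_0) = \bigcup\{P_i(r_0) \mid r_0 \in U_i\}$ and the empty-valued members contribute nothing, there must be an index $i$ with $r_0 \in U_i$, $P_i = \imath \circ \sigma_i$, and $x_0 \in P_i(r_0) = \{\sigma_i(r_0)\}$, hence $\sigma_i(r_0) = x_0$. Taking $V := U_i \subseteq U$ and $\sigma := \sigma_i$ then furnishes a local implicit plot, since $\sigma(r) \in P_i(r) \subseteq P(r)$ for all $r \in V$ and $\sigma(r_0) = x_0$. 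This is the easy direction.

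For the converse I would build a generating family directly out of the data supplied by \textbf{UPD}. For every pair $(r_0, x_0)$ with $x_0 \in P(r_0)$, condition \textbf{UPD} provides an open $V_{r_0,x_0} \subseteq U$ and a plot $\sigma_{r_0,x_0} : V_{r_0,x_0} \to X$ with $\sigma_{r_0,x_0}(r) \in P(r)$ and $\sigma_{r_0,x_0}(r_0) = x_0$; I would form the family consisting of all the $\imath \circ \sigma_{r_0,x_0}$ together with the single constant parametrization $r \mapsto \emptyset$ defined on all of $U$, all of which lie in $\A$. It then remains to verify that the union of this family equals $P$. The inclusion of the union into $P$ is immediate since every selection satisfies $\sigma_{r_0,x_0}(r) \in P(r)$; the reverse inclusion follows by evaluating the member indexed by the pair $(r,x)$ itself at the point $r$, which yields $x$. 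The delicate point — and the one I expect to be the main obstacle — is the bookkeeping over the region where $P$ takes the value $\emptyset$: one must check both that the domain of the union is exactly $U$ and that the union is genuinely empty there. This is settled by two observations: the constant $\emptyset$-plot is defined on all of $U$, so the union has domain $U$; and no local implicit plot can pass through a point $r$ with $P(r) = \emptyset$, because $\sigma_{r_0,x_0}(r) \in P(r) = \emptyset$ is impossible, whence $V_{r_0,x_0} \subseteq \{r \mid P(r) \neq \emptyset\}$ and the selections contribute nothing there. Hence the union equals $P$, so $P$ is a plot of $\mathfrak{P}_u(X)$.
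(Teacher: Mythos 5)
Your proposal is correct and follows essentially the same route as the paper: the forward direction extracts a local implicit plot from a member $\imath\circ\sigma_i$ of the generating union, and the converse reconstructs $P$ as the union of all its local implicit plots together with a constant $\emptyset$-parametrization. Your explicit identification of the generating collection $\A$ and the extra care over the region where $P(r)=\emptyset$ are slightly more detailed than the paper's version but do not change the argument.
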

%{\color{red} JPM: Please precise what is an implicit plot}
%{\color{red} ARA: ``local implicit plot" is just a naming for such a plot (like `local sections' of fiber bundles)}
\begin{proof}
Suppose that $ P:U \rightarrow \mathfrak{P}_{u}(X) $ is a plot.
By definition, $ P $ is the union of a collection $ \{\imath\circ P_i\mid  P_i:U_i\rightarrow X$ is a plot in  $X, i\in J \} $ and probably, the  constant parametrization with the value $\emptyset $ defined on some open subset of $ U $.
Let $ r_0 \in U $ and $ x_0\in P(r_0)=\bigcup_{i\in J} \imath\circ P_i(r_0) $. 
Then 
$ x_0\in \imath\circ P_i(r_0) $ and $ r_0 \in U_i $, for some $ i\in J $. Hence 
$ P_i:U_i\rightarrow X $ is a local implicit plot such that $ P_i(r) \in P(r) $  
for every $ r \in U_i\subset U $, and $ P_i(r_0)=x_0 $.

Conversely, suppose that $ P:U\rightarrow\mathfrak{P}(X) $ is a parametrization with the property \textbf{UPD}.
Let $ \mathcal{I} $ be the collection of all implicit plots $ \sigma $ of $ P $ and the  constant parametrization defined on some open subset of $ U $ with the value $\emptyset $.
Let $ Q $ be the union parametrization of the family of $ \mathcal{I} $. 
We prove that $ Q=P $.
Firstly, the domain of definition of $ Q $ is exactly $ U $.
Now let $ r_0\in U $ be arbitrary. It is obvious that $ Q(r_0)\subset P(r_0) $.
So let $ x_0\in P(r_0) $. Then there exists an implicit plot $ \sigma : V \rightarrow X $, defined on an open neighborhood $ V \subset U $ of $ r_0 $ with $ \sigma(r_0)=x_0 $, which implies $ x_0\in Q(r_0) $.
Hence $ Q=P $. Indeed, $ P $ is the union of its local implicit plots and a plot in $ \mathfrak{P}_{u}(X) $.
\end{proof}	
\begin{Cor}\label{cor:weak-union}
The identity map 
$ \mathrm{id}:\mathfrak{P}_{u}(X)\rightarrow \mathfrak{P}_{w}(X) $
is smooth.
In other words, the union power set diffeology is finer than the weak power set diffeology.
\end{Cor}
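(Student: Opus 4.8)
The plan is to unwind what smoothness of the identity map means and then simply compare the two pointwise characterizations of plots. Since $\mathrm{id}\circ P = P$ for any parametrization $P$, a map $\mathrm{id}\colon \mathfrak{P}_u(X)\to\mathfrak{P}_w(X)$ is diffeologically smooth (Definition \ref{d:diffeolmap}) precisely when every plot of $\mathfrak{P}_u(X)$ is also a plot of $\mathfrak{P}_w(X)$; equivalently, the union power set diffeology is contained in the weak power set diffeology, which is exactly the assertion ``finer.'' So the whole statement reduces to a plain inclusion of sets of parametrizations, and I would work throughout with the explicit condition \textbf{UPD} established in the Proposition just proved, rather than with the generating-family definition of the union diffeology.

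The core step is then immediate. I would take an arbitrary plot $P\colon U\to\mathfrak{P}(X)$ satisfying \textbf{UPD} and verify \textbf{WPD} for it. Fix $r_0\in U$ with $P(r_0)\neq\emptyset$, which is the only case \textbf{WPD} constrains. Choosing any element $x_0\in P(r_0)$ (possible since the value is nonempty), I would feed the pair $(r_0,x_0)$ into \textbf{UPD} to obtain an open neighborhood $V\subset U$ of $r_0$ and a plot $\sigma\colon V\to X$ with $\sigma(r)\in P(r)$ for all $r\in V$. This $\sigma$ is visibly a local selection plot in the sense of \textbf{WPD}, so the condition holds at $r_0$; since $r_0$ was arbitrary, $P$ is a plot of $\mathfrak{P}_w(X)$. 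This establishes the inclusion and hence the smoothness of the identity.

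I do not expect a genuine obstacle here: the argument is a one-line reduction once the plot characterizations are available. The only subtlety worth flagging is the asymmetry between the two conditions, and it is precisely this asymmetry that makes the inclusion trivial in this direction. Condition \textbf{WPD} asks merely for the \emph{existence} of some local selection through a point, and only where $P(r_0)\neq\emptyset$, whereas \textbf{UPD} supplies a local implicit plot through \emph{every} prescribed initial condition $x_0\in P(r_0)$. Thus a \textbf{UPD}-plot carries strictly more data than \textbf{WPD} requires; forgetting the prescribed initial value $\sigma(r_0)=x_0$ turns a local implicit plot into a local selection plot, which is all that is needed.
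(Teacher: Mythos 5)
Your argument is correct and is exactly the reasoning the paper intends: the corollary is stated without proof as an immediate consequence of the \textbf{UPD} characterization, and your step of choosing any $x_0\in P(r_0)$ and discarding the initial-condition data to obtain a \textbf{WPD} local selection plot is precisely that implicit argument.
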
 

%\begin{figure}
%	\includegraphics[scale=.15]{002}
%	\caption{The graph of a 1-plot in $ \mathfrak{P}_{u}(\mathbb{\mathbb{R}}) $ with respect to the union  power set diffeology.}
%	\label{fig:...}
%\end{figure}

\subsection{Smooth set-valued mappings}
Let $ X $ and $ Y $ be diffeological spaces.
We can simply treat a smooth set-valued map $ \phi:X\rightarrow\mathfrak{P}_{u}(Y) $ 
as a usual smooth map, that is, a map taking the plots in $ X $ to the plots in $ \mathfrak{P}_{u}(Y) $.

\begin{example}
Suppose that $ X $ is a  diffeological space and $ f,g:X\rightarrow\mathbb{R}  $ are two smooth functions.
The map $ \phi:X\rightarrow\mathfrak{P}_{u}(\mathbb{R}) $ defined by  
$$ \phi(x)=[\min\{f(x),g(x)\}, \max\{f(x),g(x)\}] $$
is smooth. To verify this,
let $ P:U\rightarrow X $ be a plot and let $ r_0\in U $, $ a_0\in\phi\circ P(r_0) $.
Then we have two cases:

(i) If $ f(P(r_0))\neq g(P(r_0)) $: Let $ t_0=\dfrac{a_0-g\circ P(r_0)}{f\circ P(r_0)-g\circ P(r_0)} $, which is a number belonging to $ [0,1] $.  Define $ \sigma:U\rightarrow \mathbb{R} $ by $ \sigma(r)=(t_0)f\circ P (r)+(1-t_0)g\circ P(r) $. 
Since the sum and the product in $ \mathbb{R} $ are smooth, $ \sigma $ is also smooth. One can check that $ \sigma(r)\in \phi\circ P(r) $. In particular, $ \sigma(r_0)=(t_0)f\circ P (r_0)+(1-t_0)g\circ P(r_0)=a_0 $. 

(ii) If $ f(P(r_0))=g(P(r_0)) $: It is enough to choose $ \sigma=f\circ P $. Clearly, $ \sigma $ has the desired properties.
\end{example}	
\begin{Theorem}	\label{The-lsc}
Let $ X $ and $ Y $ be diffeological spaces.
Suppose that $ \phi:X\rightarrow\mathfrak{P}_{u}(Y) $ is a smooth set-valued map.
Then $ \phi $ is lower semi-continuous with respect to the D-topology.
\end{Theorem}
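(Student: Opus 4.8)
The plan is to unwind the definition of lower semi-continuity and reduce it, via the D-topology, to a statement about a single plot, which I can then feed into the characterization \textbf{UPD} of plots in $\mathfrak{P}_{u}(Y)$. Recall that lower semi-continuity of $\phi$ with respect to the D-topology means that for every D-open $W \subseteq Y$, the lower preimage $\phi^{-1}(W) = \{x \in X \mid \phi(x) \cap W \neq \emptyset\}$ is D-open in $X$. By the definition of the D-topology, it suffices to fix a D-open set $W \subseteq Y$ and an arbitrary plot $P : U \rightarrow X$, and to show that $P^{-1}\big(\phi^{-1}(W)\big)$ is open in $U$.

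First I would fix $r_0 \in P^{-1}\big(\phi^{-1}(W)\big)$ and produce an open neighborhood of $r_0$ contained in this set. By assumption $\phi(P(r_0)) \cap W \neq \emptyset$, so I may choose $y_0 \in (\phi \circ P)(r_0) \cap W$. Since $\phi$ is smooth, $\phi \circ P$ is a plot in $\mathfrak{P}_{u}(Y)$, and so \textbf{UPD} applies: taking the initial condition $y_0 \in (\phi\circ P)(r_0)$, there exist an open neighborhood $V \subseteq U$ of $r_0$ and a local implicit plot $\sigma : V \rightarrow Y$ with $\sigma(r) \in (\phi \circ P)(r)$ for every $r \in V$ and $\sigma(r_0) = y_0$.

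The final step uses that any smooth map is D-continuous. Thus $\sigma$ is D-continuous, and since $W$ is D-open in $Y$, the set $\sigma^{-1}(W)$ is open in $V$, hence open in $U$. It contains $r_0$, because $\sigma(r_0) = y_0 \in W$; and for every $r \in \sigma^{-1}(W)$ we have $\sigma(r) \in (\phi \circ P)(r) \cap W$, so $\phi(P(r)) \cap W \neq \emptyset$, that is, $r \in P^{-1}\big(\phi^{-1}(W)\big)$. Hence $\sigma^{-1}(W)$ is the desired open neighborhood, $P^{-1}\big(\phi^{-1}(W)\big)$ is open, and as $P$ was an arbitrary plot, $\phi^{-1}(W)$ is D-open in $X$.

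I expect the only real subtlety to be conceptual rather than computational: the argument crucially needs a selection plot passing through the \emph{prescribed} point $y_0 \in W$, and this is exactly the strengthening that \textbf{UPD} provides over the weak condition \textbf{WPD} (where one obtains merely \emph{some} local selection, with no control over its value at $r_0$). This is precisely why the corresponding statement fails for the weak power set diffeology, as the earlier $\mathbb{Q}$-valued example shows, and it is the point where the choice of the union diffeology enters essentially.
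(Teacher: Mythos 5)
Your proof is correct and follows essentially the same route as the paper's: fix a D-open set and a plot $P$ in $X$, pick $y_0\in(\phi\circ P)(r_0)\cap W$, invoke \textbf{UPD} to get a local implicit plot $\sigma$ through $y_0$, and observe that $\sigma^{-1}(W)$ is an open neighborhood of $r_0$ contained in $P^{-1}\big(\phi^{-1}(W)\big)$. The only cosmetic difference is that you justify openness of $\sigma^{-1}(W)$ via D-continuity of smooth maps, whereas the paper reads it off directly from the definition of D-open sets applied to the plot $\sigma$; these are the same fact.
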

\begin{proof}
Assume that $ O $ is a D-open subset of $ Y $. Let $ P:U\rightarrow X $ be any plot and let $ r_0\in {P}^{-1}\big{(}\phi^{-1}(O)\big{)} $ be arbitrary, 
where $ \phi^{-1}(O)=\{ x\in X \mid \phi(x)\cap O\neq\emptyset\} $. 
So there is an element $ y_0 $ in $ \phi\circ P(r_0)\cap O $.
Because $ \phi $ is smooth, $ \phi\circ P $ is a plot in $ \mathfrak{P}_{u}(Y) $. There exist an open neighborhood $ V \subset U $ of $ r_0 $ and a plot $ \sigma : V \rightarrow Y $  such that $ \sigma(r) \in \phi\circ P(r) $, for every $ r \in V $, and $ \sigma(r_0)=y_0 $.
If $ r\in\sigma^{-1}(O) $, then $ \sigma(r) \in \phi\circ P(r)\cap O\neq\emptyset $ and hence 
$ r\in (\phi\circ P)^{-1}(O)={P}^{-1}\big{(}\phi^{-1}(O)\big{)} $, which implies
$ \sigma^{-1}(O)\subset {P}^{-1}\big{(}\phi^{-1}(O)\big{)}$.
But $ \sigma^{-1}(O) $ is an open neighborhood of $ r_0 $ so that $ {P}^{-1}\big{(}\phi^{-1}(O)\big{)}$ is open as well. Therefore $ \phi^{-1}(O) $ is D-open.
\end{proof}
\begin{Proposition}\label{pro:union-se}
The map $ \imath:X\rightarrow\mathfrak{P}_{u}(X) $ defined by $ \imath(x)=\{x\} $ is a strong embedding.
\end{Proposition}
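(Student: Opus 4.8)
The plan is to verify the three defining properties of a strong embedding in turn: that $\imath$ is an induction, a diffeological immersion, and a D-embedding. Injectivity of $\imath$ is immediate, since $\{x\}=\{x'\}$ forces $x=x'$, and smoothness of $\imath$ (hence D-continuity) is built into the definition of $\mathfrak{P}_{u}(X)$, whose diffeology is generated by $\imath$.

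For the immersion I would not repeat the computation of Proposition \ref{pro: weak-immr}, but rather deduce it from the weak case. That proposition shows $\imath:X\rightarrow\mathfrak{P}_{w}(X)$ is a diffeological immersion, while Corollary \ref{cor:weak-union} gives that $\mathrm{id}:\mathfrak{P}_{u}(X)\rightarrow\mathfrak{P}_{w}(X)$ is smooth. Applying the ``in particular'' part of Proposition \ref{p:immr}, with the finer diffeology $\mathcal{D}'$ equal to the union diffeology and the coarser $\mathcal{D}$ equal to the weak diffeology, then yields that $\imath:X\rightarrow\mathfrak{P}_{u}(X)$ is a diffeological immersion. (Alternatively, one repeats the argument of Proposition \ref{pro: weak-immr} verbatim, now using a local implicit plot furnished by \textbf{UPD} through $x_{0}$ instead of a mere local selection plot.)

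For the induction I must show $\imath^{*}(\mathcal{D}_{\mathfrak{P}_{u}(X)})=\mathcal{D}_{X}$. The inclusion $\mathcal{D}_{X}\subseteq\imath^{*}(\mathcal{D}_{\mathfrak{P}_{u}(X)})$ is just smoothness of $\imath$. For the reverse, let $P:U\rightarrow X$ be a parametrization such that $\imath\circ P$ is a plot of $\mathfrak{P}_{u}(X)$, and fix $r_{0}\in U$. Since $P(r_{0})\in(\imath\circ P)(r_{0})=\{P(r_{0})\}$, the characterization \textbf{UPD} supplies an open neighborhood $V$ of $r_{0}$ and a local implicit plot $\sigma:V\rightarrow X$ with $\sigma(r)\in(\imath\circ P)(r)=\{P(r)\}$ for all $r\in V$; hence $\sigma=P|_{V}$, so $P|_{V}$ is a plot of $X$. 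As $r_{0}$ was arbitrary, the locality axiom forces $P\in\mathcal{D}_{X}$, giving the induction.

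The main work, and the principal obstacle, is the D-embedding: I must show $\imath$ is a homeomorphism onto $\imath(X)$ equipped with the subspace D-topology from $\mathfrak{P}_{u}(X)$. Since $\imath$ is already injective and D-continuous, it suffices to prove that $\imath$ is open onto its image, i.e., that for every D-open $A\subseteq X$ there is a D-open $\mathcal{B}\subseteq\mathfrak{P}_{u}(X)$ with $\mathcal{B}\cap\imath(X)=\imath(A)$. The natural candidate is the ``lower'' set $\mathcal{B}=\{S\in\mathfrak{P}(X)\mid S\cap A\neq\emptyset\}$, for which clearly $\{x\}\in\mathcal{B}$ iff $x\in A$, so that $\mathcal{B}\cap\imath(X)=\imath(A)$. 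The remaining point is that $\mathcal{B}$ is D-open, and here Theorem \ref{The-lsc} is exactly the tool: any plot $Q:W\rightarrow\mathfrak{P}_{u}(X)$ is a smooth set-valued map from $W$ (carrying the nebulae diffeology, whose D-topology is the usual topology), hence lower semi-continuous, so $Q^{-1}(\mathcal{B})=\{r\mid Q(r)\cap A\neq\emptyset\}$ is open. Thus $\mathcal{B}$ is D-open, $\imath(A)$ is relatively D-open in $\imath(X)$, and $\imath$ is a D-embedding. Collecting the three properties establishes that $\imath$ is a strong embedding. I expect the identification of the correct D-open set $\mathcal{B}$ and the appeal to lower semi-continuity to be the only non-routine step.
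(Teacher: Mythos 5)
Your proposal is correct and follows essentially the same route as the paper: the immersion is deduced from Proposition \ref{pro: weak-immr} together with Corollary \ref{cor:weak-union} via Proposition \ref{p:immr}, and the D-embedding is obtained by showing that $\{S\in\mathfrak{P}(X)\mid S\cap A\neq\emptyset\}$ is D-open using the lower semi-continuity of plots from Theorem \ref{The-lsc}. The only difference is that you spell out the induction step (via \textbf{UPD} and locality), which the paper dismisses as ``easily seen''.
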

\begin{proof}
It is easily seen that $ \imath $ is an induction.
Also, since the identity map 
$ \mathrm{id}:\mathfrak{P}_{u}(X)\rightarrow \mathfrak{P}_{w}(X) $
is smooth and $ \imath:X\rightarrow\mathfrak{P}_{w}(X) $ is a diffeological immersion, 
$ \imath:X\rightarrow\mathfrak{P}_{u}(X) $ is a diffeological immersion, by Proposition \ref{p:immr}.
We show
$ \imath $ is a D-topological embedding, too.
Assume that $ O\subset X $ is D-open. Set
$ O'=\{A\in\mathfrak{P}(X)\mid A\cap O\neq\emptyset\} $ so that
$ \imath(O)=\imath(X)\cap O' $.
It is sufficient to prove that $ O' $ is D-open.
Let $ P:U\rightarrow \mathfrak{P}_{u}(X) $ be a plot. 
By Theorem \ref{The-lsc},  
$ P $ is lower semi-continuous with respect to the D-topology. So 
\begin{center}
$ P^{-1}(O')=\{r\in U\mid P(r)\in O'\} =\{r\in U\mid P(r)\cap O\neq\emptyset\} $
\end{center}
is open, which means $ O' $
is D-open.
Therefore, it is a strong embedding.
\end{proof}

\begin{Cor} 
$ \mathrm{dim}(X) \leq \mathrm{dim}(\mathfrak{P}_{u}(X)) $, for any diffeological space $ X $.
\end{Cor}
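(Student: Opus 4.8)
The plan is to deduce this immediately from Proposition \ref{pro:union-se}, which already identifies $\imath\colon X\to\mathfrak{P}_u(X)$ as a strong embedding and hence, in particular, as a diffeological immersion. The content of the corollary is then nothing more than the general fact --- the very fact used to conclude Proposition \ref{pro: weak-immr} --- that a diffeological immersion $f\colon X\to Y$ forces $\mathrm{dim}(X)\le\mathrm{dim}(Y)$. So the whole argument reduces to invoking \ref{pro:union-se} together with this monotonicity principle, applied with $Y=\mathfrak{P}_u(X)$.

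For completeness I would spell out why a diffeological immersion does not decrease the dimension of the target, since this is where all the genuine work sits. Assume $\mathrm{dim}(\mathfrak{P}_u(X))=d<\infty$ (otherwise there is nothing to prove) and fix a covering generating family $\mathcal{G}$ of $\mathfrak{P}_u(X)$ with $\mathrm{dim}(\mathcal{G})=d$. Given any plot $P\colon U\to X$ and a base point $r_0\in U$, the composite $\imath\circ P$ is a plot of $\mathfrak{P}_u(X)$, so near $r_0$ it factors as $\imath\circ P=Q\circ g$ with $Q\in\mathcal{G}$ (hence $\mathrm{dim}(\mathrm{dom}(Q))\le d$) and $g$ smooth. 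The pair $(g(r_0),P(r_0))$ lies in the pullback $Q^{*}X$, so the immersion property of $\imath$ supplies a smooth local retraction $\rho$ on an open subset $V'$ of $\mathrm{dom}(Q)$ with $\Pr_2\circ\rho\circ Q^{*}\imath=Q_{\#}$ on a neighborhood $O$ of that pair in $Q^{*}X$. Setting $c:=\Pr_2\circ\rho\colon V'\to X$ yields a genuine plot of $X$ whose domain has dimension $\le d$, and a continuity argument shows $(g(r),P(r))\in O$ for $r$ near $r_0$, whence $P=c\circ g$ locally. Thus the collection of all plots of $X$ with domain of dimension $\le d$, augmented by the constants needed for covering, is a covering generating family of $X$ of dimension $\le d$, giving $\mathrm{dim}(X)\le d=\mathrm{dim}(\mathfrak{P}_u(X))$.

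Two remarks on the route are worth recording. First, one may bypass quoting \ref{pro:union-se} in full: combining Proposition \ref{p:immr} with Corollary \ref{cor:weak-union} (smoothness of $\mathrm{id}\colon\mathfrak{P}_u(X)\to\mathfrak{P}_w(X)$) and the immersion of Proposition \ref{pro: weak-immr} already exhibits $\imath\colon X\to\mathfrak{P}_u(X)$ as a diffeological immersion, which is all the dimension estimate requires. Second, I would \emph{not} try to run the estimate off the bare induction property: although $\imath$ is an induction, the clean ``restrict a generating plot $Q$ to $Q^{-1}(\imath(X))$'' argument only works when $\imath(X)$ is D-open in $\mathfrak{P}_u(X)$, which one does not expect for the set of singletons. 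Handling this non-open image is the single delicate point, and it is exactly what the local retractions of the immersion (rather than the induction alone) are there to resolve --- conveniently, it is already packaged inside Proposition \ref{pro:union-se}, so at the level of the corollary the proof is a one-line invocation.
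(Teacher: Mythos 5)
Your proposal is correct and follows essentially the same route as the paper: the corollary is stated immediately after Proposition \ref{pro:union-se} and is meant to follow, exactly as in Proposition \ref{pro: weak-immr}, from the fact that a diffeological immersion (here $\imath$) cannot decrease dimension. Your spelled-out verification of that monotonicity principle via the local retractions supplied by the immersion is sound; the paper simply leaves this general fact implicit.
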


\begin{example}

In the above statement, the equality may or may not be true.
As example, the equality holds for $ X=\mathbb{R}^0 $. 
While for the discrete space $ \mathbb{Q} $ of the rational numbers, the equality does not hold.
Because, the parametrization 
$ P:\mathbb{R}\rightarrow\mathfrak{P}_{u}(\mathbb{\mathbb{Q}}) $
given by $ P(0)=\{0\} $ and
$ P(r)=[-|r|,|r|]\cap \mathbb{Q} $ for $ r\neq 0 $, is a plot in $ \mathfrak{P}_{u}(\mathbb{Q})  $, which is not locally constant.
Therefore, $ \mathrm{dim}(\mathbb{Q})=0<\mathrm{dim}(\mathfrak{P}_{u}(\mathbb{Q})) $.

\end{example}

%\begin{rem}
%	The example above also indicates that if $ X $ is a discrete diffeological space, $ \mathfrak{P}_{u}(X) $ may or may not be discrete. But
%	if  $ \mathfrak{P}_{u}(X) $ is discrete, $ X $ is surely a discrete diffeological space by Proposition \ref{p-dim}.
%	Moreover, $ X $ is an indiscrete diffeological space if and only if $ \mathfrak{P}_{u}(X) $ is indiscrete, thanks to the axiom of choice and Proposition \ref{Pro-emb}.
%\end{rem}

\begin{Proposition}
Let $ X $ and $ Y $ be diffeological spaces.
A set-valued map $ \phi:X\rightarrow\mathfrak{P}_{u}(Y) $ is smooth if and only if the extension $ \widetilde{\phi}:\mathfrak{P}_{u}(X)\rightarrow\mathfrak{P}_{u}(Y) $ defined by $ \widetilde{\phi}(A)=\bigcup_{x\in A} \phi(x) $ for every subset $ A\subset X $, is smooth.
\end{Proposition}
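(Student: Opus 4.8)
My plan is to prove the two implications separately, exploiting the pointwise characterization \textbf{UPD} of plots in the union power space together with the fact, established in Proposition \ref{pro:union-se}, that $\imath : X \rightarrow \mathfrak{P}_{u}(X)$ is smooth.

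The forward implication, that smoothness of $\widetilde{\phi}$ forces smoothness of $\phi$, is the easy one and I would dispatch it first. The point is simply that $\widetilde{\phi}$ extends $\phi$ through the singleton embedding: for every $x \in X$ one has $\widetilde{\phi}(\imath(x)) = \widetilde{\phi}(\{x\}) = \bigcup_{x' \in \{x\}} \phi(x') = \phi(x)$, so that $\phi = \widetilde{\phi} \circ \imath$. Since $\imath$ is smooth and a composition of smooth maps is smooth, $\phi$ is smooth.

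For the converse I would verify \textbf{UPD} directly for the parametrization $Q := \widetilde{\phi} \circ P$, where $P : U \rightarrow \mathfrak{P}_{u}(X)$ is an arbitrary plot; note that $Q(r) = \bigcup_{x \in P(r)} \phi(x)$. Fix $r_0 \in U$ and an initial condition $y_0 \in Q(r_0)$. By the very definition of the union, there is some $x_0 \in P(r_0)$ with $y_0 \in \phi(x_0)$. The key step is then a double application of \textbf{UPD}. First, because $P$ is a plot and $x_0 \in P(r_0)$, \textbf{UPD} supplies a neighborhood $V_1 \subset U$ of $r_0$ and a local implicit plot $\sigma : V_1 \rightarrow X$ with $\sigma(r) \in P(r)$ for all $r \in V_1$ and $\sigma(r_0) = x_0$. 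Second, smoothness of $\phi$ makes $\phi \circ \sigma : V_1 \rightarrow \mathfrak{P}_{u}(Y)$ a plot, and since $y_0 \in \phi(x_0) = (\phi\circ\sigma)(r_0)$, a further application of \textbf{UPD} yields a neighborhood $V \subset V_1$ of $r_0$ and a plot $\tau : V \rightarrow Y$ with $\tau(r) \in \phi(\sigma(r))$ for all $r \in V$ and $\tau(r_0) = y_0$. It then remains to observe that for $r \in V$ one has $\sigma(r) \in P(r)$, whence $\phi(\sigma(r)) \subseteq \bigcup_{x \in P(r)} \phi(x) = Q(r)$, so that $\tau(r) \in Q(r)$; together with $\tau(r_0) = y_0$ this exhibits $\tau$ as the required local implicit plot of $Q$, and \textbf{UPD} is verified.

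I do not expect a serious obstacle here, since the argument is essentially a chaining of the selection property through the two diffeologies. The only point demanding care is the bookkeeping of the two nested neighborhoods and the verification that the second selection $\tau$, which a priori lives only in the fiber $\phi(\sigma(r))$, automatically lands in the larger union $Q(r)$; this is exactly where the monotonicity $\sigma(r)\in P(r) \Rightarrow \phi(\sigma(r)) \subseteq \widetilde{\phi}(P(r))$ of the extension is used, and it is what makes the union diffeology the natural target.
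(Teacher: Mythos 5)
Your proof is correct; the paper itself gives no argument here, deferring instead to the analogous proof in \cite{ADD2020}, so your write-up is a complete, self-contained version of the intended reasoning. The two-step chaining of \textbf{UPD} (first through $P$ to get $\sigma$, then through $\phi\circ\sigma$ to get $\tau$), together with the inclusion $\phi(\sigma(r))\subseteq\widetilde{\phi}(P(r))$ and the factorization $\phi=\widetilde{\phi}\circ\imath$ for the easy direction, is exactly the standard argument, and the vacuity of \textbf{UPD} when $\widetilde{\phi}(P(r_0))=\emptyset$ means no case is missed.
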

\begin{proof}
The proof is quite similar to that of \cite{ADD2020}. 
\end{proof}
\begin{Theorem}
The functional diffeology on $ C^{\infty}(X,\mathfrak{P}_{u}(Y)) $ is the coarsest diffeology for which
$ \widetilde{\mathrm{ev}}: C^{\infty}(X,\mathfrak{P}_{u}(Y))\times\mathfrak{P}_{u}(X)\rightarrow \mathfrak{P}_{u}(Y)  $ defined by $ \widetilde{\mathrm{ev}}(\phi,A)=\widetilde{\phi}(A) $ is smooth.
In particular, the operator $ \sim: C^{\infty}(X,\mathfrak{P}_{u}(Y))\rightarrow C^{\infty}(\mathfrak{P}_{u}(X),\mathfrak{P}_{u}(Y))  $ taking $  \phi $ to the extension $ \widetilde{\phi} $ is smooth.
\end{Theorem}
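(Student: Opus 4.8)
The plan is to prove the exact analogue, for $\widetilde{\mathrm{ev}}$, of the classical characterization of the functional diffeology recalled above for $\mathrm{ev}$, and then to deduce the smoothness of $\sim$ by currying. Throughout I write $\mathcal{F} = C^{\infty}(X,\mathfrak{P}_{u}(Y))$ equipped with its functional diffeology, and I use freely the \textbf{UPD} description of plots in $\mathfrak{P}_{u}$.

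First I would show that $\widetilde{\mathrm{ev}}$ is smooth for the functional diffeology. A plot of $\mathcal{F}\times\mathfrak{P}_{u}(X)$ is a pair $(Q,A)$ on a domain $W$ with $Q$ a functional plot and $A$ a plot in $\mathfrak{P}_{u}(X)$, and I must verify that $w\mapsto\widetilde{Q(w)}(A(w))$ satisfies \textbf{UPD}. Fix $w_0\in W$ and $y_0\in\widetilde{Q(w_0)}(A(w_0))=\bigcup_{x\in A(w_0)}Q(w_0)(x)$, so that $y_0\in Q(w_0)(x_0)$ for some $x_0\in A(w_0)$. Applying \textbf{UPD} to $A$ produces a local implicit plot $\sigma:W_1\rightarrow X$ with $\sigma(w)\in A(w)$ and $\sigma(w_0)=x_0$. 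Since $Q$ is a functional plot and $\sigma$ is a plot in $X$, the parametrization $Q\circledcirc\sigma$ is a plot in $\mathfrak{P}_{u}(Y)$; restricting it along the diagonal $w\mapsto(w,w)$ (which maps $W_1$ into $W\times W_1$) shows that $R:w\mapsto Q(w)(\sigma(w))$ is a plot on $W_1$ with $y_0\in R(w_0)$. A second application of \textbf{UPD}, now to $R$, yields a local implicit plot $\tau$ with $\tau(w)\in Q(w)(\sigma(w))$ and $\tau(w_0)=y_0$; since $\sigma(w)\in A(w)$ we get $\tau(w)\in\widetilde{Q(w)}(A(w))$, which is exactly the selection demanded by \textbf{UPD}. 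Hence $\widetilde{\mathrm{ev}}$ is smooth.

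Next I would verify coarsest-ness. Let $\mathcal{D}$ be any diffeology on $\mathcal{F}$ making $\widetilde{\mathrm{ev}}$ smooth, and let $Q:V\rightarrow(\mathcal{F},\mathcal{D})$ be a plot; the goal is that $Q\circledcirc P'$ is a plot for every plot $P':U\rightarrow X$, i.e. that $Q$ is a functional plot. The idea is to feed singletons into $\widetilde{\mathrm{ev}}$: as $\imath$ is smooth (Proposition \ref{pro:union-se}), the pair $(Q\circ\pi_1,\,\imath\circ P'\circ\pi_2)$ is a plot of $(\mathcal{F},\mathcal{D})\times\mathfrak{P}_{u}(X)$ on $V\times U$, and composing with $\widetilde{\mathrm{ev}}$ together with the identity $\widetilde{Q(v)}(\{P'(s)\})=Q(v)(P'(s))$ shows that $(v,s)\mapsto Q(v)(P'(s))=(Q\circledcirc P')(v,s)$ is a plot. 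Thus $Q$ lies in the functional diffeology, so $\mathcal{D}$ is finer than it; combined with the previous paragraph, this identifies the functional diffeology as the coarsest diffeology for which $\widetilde{\mathrm{ev}}$ is smooth.

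Finally, the smoothness of $\sim$ comes by currying. Well-definedness, namely $\widetilde{\phi}$ smooth, is the preceding Proposition. For smoothness, take a functional plot $Q:V\rightarrow\mathcal{F}$; by the functional-diffeology description of the target $C^{\infty}(\mathfrak{P}_{u}(X),\mathfrak{P}_{u}(Y))$ it suffices to check that $(v,s)\mapsto\widetilde{Q(v)}(A(s))$ is a plot for every plot $A:U\rightarrow\mathfrak{P}_{u}(X)$, and this map equals $\widetilde{\mathrm{ev}}\circ(Q\circ\pi_1,\,A\circ\pi_2)$, a plot by the first step. I expect the main obstacle to be that first step: checking \textbf{UPD} for $\widetilde{\mathrm{ev}}$ requires a two-stage lifting (first lift $A$ to a selection $\sigma$ valued in $X$, then lift $w\mapsto Q(w)(\sigma(w))$ to a selection $\tau$ valued in $Y$), with the diagonal restriction used to keep all lifts defined on a common neighborhood of $w_0$. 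The remaining steps are formal once the functional diffeology and the embedding $\imath$ are in hand.
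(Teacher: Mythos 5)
Your proposal is correct and follows essentially the same route as the paper: the same two-stage \textbf{UPD} lifting (first a local implicit plot $\sigma$ for the plot in $\mathfrak{P}_{u}(X)$, then a second lift $\tau$ for the resulting plot $w\mapsto Q(w)(\sigma(w))$ in $\mathfrak{P}_{u}(Y)$), and the same recovery of $\mathrm{ev}$ as $\widetilde{\mathrm{ev}}\circ(\mathrm{id}\times\imath)$ for the coarseness claim. The only differences are organizational (you work on a common domain with a diagonal restriction where the paper uses external products $U\times V$, and you make the currying step for $\sim$ explicit where the paper just notes that the same computation applies), which do not change the argument.
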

\begin{proof}
Assume that $ P:U\rightarrow C^{\infty}(X,\mathfrak{P}_{u}(Y)) $ is a plot with respect to the functional diffeology and
denote by $ \widetilde{P}:U\rightarrow C^{\infty}(\mathfrak{P}_{u}(X),\mathfrak{P}_{u}(Y)) $
the map given by $ \widetilde{P}(r)=\widetilde{P(r)} $.
Let $ Q:V\rightarrow\mathfrak{P}_{u}(X) $ be a plot in $ \mathfrak{P}_{u}(X) $,
$ (r_0,s_0) \in U\times V $ and $ y_0\in\widetilde{\mathrm{ev}}\circ (\widetilde{P}\times Q)(r_0,s_0) $. Then there exists $ x_0\in Q(s_0) $ such that $ y_0\in P(r_0)(x_0) $.
So we have a plot $ \sigma:V'\rightarrow X $ defined on an open neighborhood of $ s_0\in V'\subset V $ with $ \sigma(s_0)=x_0 $ and  $ \sigma(s)\in Q(s) $ for all $ s\in V' $.
On the other hand, 	$ \mathrm{ev}: C^{\infty}(X,\mathfrak{P}_{u}(Y))\times X\rightarrow \mathfrak{P}_{u}(Y)  $ is smooth. So 
$ \mathrm{ev}\circ(P\times \sigma):U\times V'\rightarrow\mathfrak{P}_{u}(Y) $ is a plot in $ \mathfrak{P}_{u}(Y) $ that  $ y_0\in \mathrm{ev}\circ(P\times \sigma)(r_0,s_0) $. Consequently, we obtain a plot $ \tau:W \rightarrow Y $ defined on an open neighborhood of $ (r_0,s_0)\in W\subset U\times V' $ with $ \tau(r_0,s_0)=y_0 $ and  $ \tau(r,s)\in \mathrm{ev}\circ(P\times \sigma)(r,s)\subset\widetilde{\mathrm{ev}}\circ (\widetilde{P},Q)(r,s) $ for all $ (r,s)\in W $.
As a result, the way of proof shows that
the map $ \sim: C^{\infty}(X,\mathfrak{P}_{u}(Y))\rightarrow C^{\infty}(\mathfrak{P}_{u}(X),\mathfrak{P}_{u}(Y))  $ is smooth as well.

Finally, if $ \mathcal{D} $ is any diffeology on $ C^{\infty}(X,\mathfrak{P}_{u}(Y)) $ for which
$ \widetilde{\mathrm{ev}}  $ is smooth, then obviously the composition
\begin{center}
$ \mathrm{ev}:C^{\infty}(X,\mathfrak{P}_{u}(Y))\times X\stackrel{\mathrm{id}\times \imath}{\longrightarrow} C^{\infty}(X,\mathfrak{P}_{u}(Y))\times\mathfrak{P}_{u}(X)\stackrel{\widetilde{\mathrm{ev}}}{\longrightarrow} \mathfrak{P}_{u}(Y) $
\end{center}
is smooth as well,
which is equal to
$ \mathrm{ev}: C^{\infty}(X,\mathfrak{P}_{u}(Y))\times X\rightarrow \mathfrak{P}_{u}(Y)  $.
Therefore, $ \mathcal{D} $ is finer than the functional diffeology.
\end{proof}

\begin{Proposition}
Let $ f:X\rightarrow Y $ be a map between diffeological spaces and $ f_\star:\mathfrak{P}_{u}(X) \rightarrow\mathfrak{P}_{u}(Y) $ be the induced hyper-map, taking every subset $ A\subset X $ to $ f(A) $.
\begin{enumerate}
\item[(1)]
The map $ f $ is smooth if and only if $ f_\star $ is smooth.
\item[(2)]
The map $ f $ is an induction if and only if $ f_\star $ is an induction.
\item[(3)]
The map $ f $ is a diffeomorphism if and only if $ f_\star $ is a diffeomorphism.
\item[(4)]
If $ f_\star $ is a diffeological immersion, $ f $ is a diffeological immersion.
\item[(5)]
If $ f_\star $ is a diffeological submersion, $ f $ is a diffeological submersion.
\item[(6)]
If $ f_\star $ is a diffeological \'etale map, $ f $ is a diffeological \'etale map.
\item[(7)]
If $ f_\star $ is a local subduction, $ f $ is a local subduction.
\item[(8)]
If $ f_\star $ is a subduction, $ f $ is a subduction.
\item[(9)]
If  $ f $ is a local subduction, then $ f_\star $ is a subduction.
\end{enumerate} 

\end{Proposition}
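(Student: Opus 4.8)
The backbone of the whole proposition is the commutative square $\imath_Y\circ f=f_\star\circ\imath_X$ (which holds because $f_\star(\{x\})=\{f(x)\}$), together with two facts already at our disposal: by Proposition \ref{pro:union-se} both $\imath_X$ and $\imath_Y$ are strong embeddings, hence inductions, diffeological immersions and D-embeddings; and the characterization \textbf{UPD}, which lets us convert a set-valued plot into honest pointwise selection plots. I would first dispose of the purely set-theoretic layer: $f$ is injective (resp. surjective, bijective) if and only if $f_\star$ is, the nontrivial implications being read off by testing $f_\star$ on singletons $\{x\}=\imath_X(x)$ and using $f_\star(\{x\})=\{f(x)\}$. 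These equivalences already supply the injectivity/surjectivity/bijectivity halves of (2), (3), (8) and (9).

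For (1), the implication $f_\star$ smooth $\Rightarrow f$ smooth is immediate: $\imath_Y\circ f=f_\star\circ\imath_X$ is then smooth, and since $\imath_Y$ is an induction, $f$ is smooth. Conversely, if $f$ is smooth and $P\colon U\to\mathfrak{P}_u(X)$ is a plot, I verify \textbf{UPD} for $f_\star\circ P$: given $y_0\in f(P(r_0))$ choose $x_0\in P(r_0)$ with $f(x_0)=y_0$, take an implicit plot $\sigma$ of $P$ through $x_0$, and note $f\circ\sigma$ is the desired implicit plot of $f_\star\circ P$. Part (3) then follows from (1) with no extra work: when $f$ is bijective one has $(f_\star)^{-1}=(f^{-1})_\star$, so $f$ and $f^{-1}$ are both smooth iff $f_\star$ and $(f_\star)^{-1}$ are. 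For (2) the forward plot conditions are just smoothness, handled by (1); the delicate halves are the reverse plot conditions. If $f_\star$ is an induction and $f\circ P$ is a plot, then $\imath_Y\circ f\circ P=f_\star\circ(\imath_X\circ P)$ is a plot, so $\imath_X\circ P$ is a plot ($f_\star$ induction) and hence $P$ is a plot ($\imath_X$ induction). If instead $f$ is an induction and $f_\star\circ Q$ is a plot, I check \textbf{UPD} for $Q$: for $x_0\in Q(r_0)$, an implicit plot $\tau$ of $f_\star\circ Q$ through $f(x_0)$ has, by injectivity of $f$, a well-defined pointwise preimage $\sigma(r)=f^{-1}(\tau(r))\in Q(r)$; since $f$ is an induction and $f\circ\sigma=\tau$ is a plot, $\sigma$ is a plot, giving the required implicit plot through $x_0$.

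The local statements (4)--(7) share one mechanism: pull the relevant structure of $f_\star$ back along $\imath_Y\circ Q$ and then use \textbf{UPD} to replace the resulting set-valued lift by a pointwise one. For (4) I would argue through immersions directly: $f_\star\circ\imath_X=\imath_Y\circ f$ is a composition of the diffeological immersions $\imath_X$ and $f_\star$, hence a diffeological immersion (stability of immersions under composition), whereupon Proposition \ref{p:immr} applied to the triangle $X\xrightarrow{f}Y\xrightarrow{\imath_Y}\mathfrak{P}_u(Y)$ yields that $f$ is a diffeological immersion. For (5) I use that a diffeological submersion is exactly a map all of whose plot-pullbacks admit pointed local lifts: given a plot $Q$ of $Y$ and $(r_0,x_0)$ with $f(x_0)=Q(r_0)$, the plot $\imath_Y\circ Q$ satisfies $(\imath_Y\circ Q)(r_0)=f_\star(\{x_0\})$, so the submersion $f_\star$ provides a plot $R$ into $\mathfrak{P}_u(X)$ with $f_\star\circ R=\imath_Y\circ Q$ and $x_0\in R(r_0)$; applying \textbf{UPD} to $R$ at $x_0$ produces $\sigma$ with $\sigma(r)\in R(r)$, whence $f\circ\sigma=Q$ and $\sigma(r_0)=x_0$. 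Part (7) is (5) together with the surjectivity equivalence (local subduction = surjective diffeological submersion). Part (6) I would settle by the criterion that a map is a diffeological étale map iff it is simultaneously a diffeological immersion and a diffeological submersion: $f_\star$ étale gives $f_\star$ immersion and submersion, so by (4) and (5) $f$ is both, hence étale.

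The two remaining items are the reverse directions toward subductions. For (8), given a plot $Q$ of $Y$, lift $\imath_Y\circ Q$ through the subduction $f_\star$ to a plot $R$ with $f_\star\circ R=\imath_Y\circ Q$ (so $f(R(r))=\{Q(r)\}$), then select through \textbf{UPD} a plot $\sigma$ with $\sigma(r)\in R(r)$; automatically $f\circ\sigma=Q$ locally, which is exactly the local lifting characterizing a subduction (surjectivity coming from the set-theoretic layer). For (9) the cleanest route is an explicit global lift: set $R(r)=f^{-1}(Q(r))$, so that $f_\star\circ R=Q$ since $f$ is surjective; that $R$ is a plot of $\mathfrak{P}_u(X)$ is checked by \textbf{UPD}, for $x_0\in R(r_0)$ one takes an implicit plot $\tau$ of $Q$ through $f(x_0)$ and uses the local subduction $f$ to lift $\tau$ to a plot $\sigma$ through $x_0$ with $f\circ\sigma=\tau$, so that $\sigma(r)\in f^{-1}(Q(r))=R(r)$. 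I expect the main obstacle to be precisely the points where a set-valued lift must be turned into a single-valued smooth selection: in (2) this forces the use of injectivity of $f$ to make $f^{-1}\circ\tau$ well defined and smooth, and in (5)--(9) it is \textbf{UPD} that does the work, so the whole argument hinges on having the correct plot-lifting reformulations of submersion, local subduction and subduction, together with the two structural lemmas from \cite{ARA} (stability of immersions under composition and the characterization of étale maps as immersion-plus-submersion) that I invoke in (4) and (6).
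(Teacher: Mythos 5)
Your proposal is correct and follows essentially the same route as the paper's proof item by item: the same \textbf{UPD} selection arguments for (1), (2), (5), (8), (9), the same use of Proposition \ref{p:immr} together with the strong embeddings $\imath_X,\imath_Y$ for (4), and the same reductions for (6) and (7). The only (inessential) deviation is (3), which the paper deduces from (2) plus the surjectivity equivalence rather than from (1) via $(f_\star)^{-1}=(f^{-1})_\star$.
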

\begin{proof}
(1)	Assume that $ f $ is smooth and $ P:U\rightarrow\mathfrak{P}_{u}(X) $ is a plot. To show $ f_\star\circ P:U\rightarrow\mathfrak{P}_{u}(Y)  $  is a plot in $ \mathfrak{P}_{u}(Y) $, let $ r_0 \in U $, $ y_0\in f_\star\circ P(r_0) $.
Then there is an element $ x_0\in P(r_0) $ with $ y_0=f(x_0) $.
By \textbf{UPD}, there exist an open neighborhood $ V \subset U $ of $ r_0 $ and a plot $ \sigma: V \rightarrow X $ such that $ \sigma(r_0)=x_0 $ and $ \sigma(r) \in P(r) $, 
for every $ r \in V $. But $ f\circ \sigma $ is a plot in $ Y $ with $ f\circ \sigma(r)=f(x_0)=y_0 $ and one can write
$ f\circ \sigma(r) \in f(P(r))=f_\star\circ P(r) $, for every $ r \in V $.
Thus, the parametrization $ f_\star\circ P:U\rightarrow\mathfrak{P}_{u}(Y)  $  is a plot in $ \mathfrak{P}_{u}(Y) $ and  $ f_\star $ is smooth.
Conversely, if $ f_\star $ is smooth, then the composition $ f_\star\circ\imath $ is also smooth.
Hence, $ f $ is smooth.

(2)
Suppose first that  $ f $ is an induction.
To see that $ f_\star $ is  injective, let $ f(A)=f(B) $ for subsets $ A $ and $ B $ of $ X $.
Let $ a\in A $, then $ f(a)\in f(A)=f(B) $. There is some $ b\in B $ such that $  f(a)=f(b) $.
Thus, $ a=b $ and $ a $ belongs to $ B $, that is, $ A\subset B $. Interchanging the roles of elements of $ A $ and $ B $, we obtain the equality  $ A= B $. 
The smoothness of $ f_\star $ is a consequence of (1).
So assume that  $ P:U\rightarrow\mathfrak{P}_{u}(X) $ is a parametrization such that $ f_\star\circ P $ is a plot in $ \mathfrak{P}_{u}(Y) $.
To prove that $ P $ is a plot in $ \mathfrak{P}_{u}(X) $, let $ r_0 \in U $, $ x_0 \in P(r_0) $.
By \textbf{UPD}, there exist an open neighborhood $ V \subset U $ of $ r_0 $ and a plot $ \sigma: V \rightarrow Y $ such that $ \sigma(r_0)=f(x_0) $ and $ \sigma(r) \in f_\star\circ P(r) $, 
for every $ r \in V $.
Thus, for every $ r \in V $, one can choose an element $ \tau_r\in  P(r) $  with $ \sigma(r)=f(\tau_r) $.
In particular, $ \tau_{r_0}=x_0 $.
Define the parametrization $\tau:V\rightarrow X $ by $ r\mapsto \tau_r $ so that $ f\circ\tau=\sigma $.
Since $ f $ is an induction, we conclude that $ \tau $ is a plot in $ X $ and hence $ P $ is a plot in $ \mathfrak{P}_{u}(X) $.
Therefore, $ f_\star $ is an induction.

Conversely, let $ f_\star $ be an induction. Obviously, $ f $ is  injective.
Assume that $ P:U\rightarrow X $ is a parametrization such that $ f\circ P $ is a plot in $ Y $.
Then 
$ \imath\circ f\circ P $, or equivalently, $ f_\star\circ\imath\circ P $ is a plot in $ \mathfrak{P}_{u}(Y) $.
Since $ f_\star\circ\imath $ is an induction, $ P $ is a plot in $ X $.

(3)	By item  (2) and this fact that $ f $ is surjective if and only if $ f_\star $ is surjective, the result is achieved.

(4) Smoothness of $ f $ is a consequence of (1). We also have the commutative diagram
\begin{displaymath}
\xymatrix{
	\mathfrak{P}_{u}(X)\ar[r]^{f_\star}	&  \mathfrak{P}_{u}(Y)   \\
	X\ar[u]^{\imath}\ar[r]_{f} & Y\ar[u]_{\imath}  \\
}
\end{displaymath}
If $ f_\star $ is a diffeological immersion, then so is $ f_\star\circ\imath $. Consequently, $ f $ is a diffeological immersion by Proposition \ref{p:immr}.

(5)
Smoothness of $ f $ is a consequence of (1).
Assume that $ P:U\rightarrow Y $ is a plot in $ Y $ and let $ r_0\in U $, $ x_0\in f^{-1}(P(r_0)) $.
Then $ \imath\circ P $ is a plot in $ \mathfrak{P}_{u}(Y) $. 
As  $ f_\star $ is a diffeological submersion, there exists a plot $ L:V\rightarrow  \mathfrak{P}_{u}(X) $, where $ V \subset U $ is an open neighborhood of $ r_0 $, such that $ f_\star\circ L=\imath\circ P|_V $ and that $ P(r_0)=\{x_0\} $.
By \textbf{UPD}, we get a plot $ \sigma:W\rightarrow X $, where $ W \subset V $ is an open neighborhood of $ r_0 $, such that $ \sigma(r_0)=x_0 $ and
$ \sigma(r)\in L(r) $, for every $ r \in W $.
Obviously, $ f\circ\sigma(r) \in f_\star\circ L(r)=\imath\circ P(r)=\{P(r)\} $ for every $ r \in W $,  and so $ f\circ\sigma=P|_W $.

(6)
This is a direct consequence of items (4) and (5).

(7)
This is clear by item  (5) and this fact that $ f $ is surjective if and only if $ f_\star $ is surjective.

(8) The proof is similar to (5).

(9) 
Again, the smoothness of $ f_\star $ is a consequence of (1). 
Also,	$ f $ is surjective if and only if $ f_\star $ is surjective. 
Now consider a plot $ Q:U\rightarrow\mathfrak{P}_{u}(Y) $.
Define the parametrization $ P:U\rightarrow \mathfrak{P}_{u}(X) $ by $ P(r):=f^{-1}(Q(r)) $.
To verify $ P $ is a plot, let $ r_0\in U $ and $ x_0\in P(r_0) $ so that $ f(x_0)\in Q(r_0) $.
By PD, there is a plot $ \tau:V\rightarrow Y $, where $ V \subset U $ is an open neighborhood of $ r_0 $, such that  $ \tau(r_0)=f(x_0) $ and $ \tau(r) \in Q(r) $, for every $ r \in V $.
Because $ f $ is a local subduction, one can find a plot $ \sigma:W\rightarrow X $, where $ W\subset V $ is an open neighborhood of $ r_0 $, such that $ \sigma(r_0)=x_0 $ and $ f\circ\sigma=\tau $, which implies $ \sigma(r) \in f^{-1}(\tau(r))\subset f^{-1}(Q(r))=P(r)$, for every $ r \in W $.
Thus, $ P $ is a plot in $ \mathfrak{P}_{u}(X) $.
Finally, we have
$ f_\star\circ P(r)=f_\star(f^{-1}(Q(r))=Q(r) $.
Therefore,  $ f_\star $ is a subduction.
\end{proof}

Denote by $\mathsf{Diff}$ the category of diffeological spaces and smooth maps.

\begin{Cor}
The correspondence  $ \mathfrak{P}_{u}:\mathsf{Diff}\rightarrow\mathsf{Diff}  $ given by
$ X\mapsto\mathfrak{P}_{u}(X) $ in the level of objects
and $ f\mapsto f_\star $ in the level of morphisms defines a faithful functor.
\end{Cor}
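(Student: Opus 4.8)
The plan is to check the three defining properties of a faithful functor in turn: that the assignments on objects and morphisms land in $\mathsf{Diff}$, that they respect identities and composition, and that the morphism map is injective on each hom-set.

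First, the object assignment is well-defined since $\mathfrak{P}_{u}(X)$ is a genuine diffeological space by Definition \ref{d:upd}, and the morphism assignment is well-defined by item (1) of the preceding Proposition: whenever $f\colon X\to Y$ is smooth, $f_\star\colon \mathfrak{P}_{u}(X)\to\mathfrak{P}_{u}(Y)$ is smooth. This is the only analytically nontrivial input, and it is already available, so I would simply cite it.

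Next I would verify functoriality by elementary set-theoretic identities on images. For the identity, $(\mathrm{id}_X)_\star(A)=\mathrm{id}_X(A)=A$ for every $A\subset X$, so $(\mathrm{id}_X)_\star=\mathrm{id}_{\mathfrak{P}_{u}(X)}$. For composition, given smooth maps $f\colon X\to Y$ and $g\colon Y\to Z$, the standard fact that the image of a set under a composite equals the composite of the images gives $(g\circ f)_\star(A)=(g\circ f)(A)=g(f(A))=g_\star\big(f_\star(A)\big)$ for every $A\subset X$, whence $(g\circ f)_\star=g_\star\circ f_\star$. Both verifications are purely formal and require no diffeological considerations.

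Finally, for faithfulness, suppose $f,g\colon X\to Y$ are smooth maps with $f_\star=g_\star$. The key observation is the intertwining relation $f_\star\circ\imath=\imath\circ f$, which holds because $f_\star(\{x\})=\{f(x)\}=\imath(f(x))$ for each $x\in X$, and likewise for $g$. Since $\imath$ is injective, being an induction by Proposition \ref{pro:union-se}, from $\imath\circ f=f_\star\circ\imath=g_\star\circ\imath=\imath\circ g$ we conclude $f=g$; equivalently, one evaluates $f_\star=g_\star$ directly on singletons. There is no genuine obstacle in this statement: the substantive content, namely the smoothness of $f_\star$, was settled in the preceding Proposition, and the remaining steps are formal. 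The one point to keep in mind is to read faithfulness off from the behavior of $f_\star$ on the embedded copy $\imath(X)$ of $X$ (i.e.\ on singletons) rather than on arbitrary subsets.
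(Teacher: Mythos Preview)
Your proof is correct and is precisely the natural unpacking of what the paper leaves implicit: the Corollary is stated without proof, as an immediate consequence of the preceding Proposition, and your argument supplies exactly the routine verifications (well-definedness via item (1), functoriality via set-theoretic image identities, faithfulness via evaluation on singletons through $\imath$) that justify calling it a Corollary.
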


\subsection{Higher order union power spaces}
\begin{notation}
Let $ X $ be a diffeological space.
Denote $ \mathfrak{P}_{u}(\mathfrak{P}_{u}(X)) $ by $ \mathfrak{P}^2_{u}(X) $ and inductively, 
$ \mathfrak{P}_{u}(\mathfrak{P}^{n-1}_{u}(X)) $ by $ \mathfrak{P}^n_{u}(X) $, for every integer $ n\geq 2 $.
\end{notation}

\begin{Proposition}
Let $ X $ be a diffeological space.
The map
$ \tau_X:\mathfrak{P}_{u}(X)\rightarrow \mathfrak{P}^2_{u}(X) $ taking any subset $ A\subset X $ to the subspace topology on $ A $ induced from the D-topology of $ X $, i.e.
$ \tau_X(A)=\{A\cap O\mid O\subset X $ is D-open $ \} $,
is a strong embedding.
\end{Proposition}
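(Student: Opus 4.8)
The plan is to exhibit an explicit smooth retraction of $\tau_X$ and to deduce all three defining properties of a strong embedding from it. Define the \emph{union map}
$$\mathcal{U}\colon \mathfrak{P}^2_u(X) \to \mathfrak{P}_u(X), \qquad \mathcal{U}(\mathcal{B}) = \bigcup_{B \in \mathcal{B}} B.$$
Since $X$ itself is D-open, for every $A\subseteq X$ one has $\mathcal{U}(\tau_X(A)) = \bigcup_{O \text{ D-open}}(A \cap O) = A \cap X = A$, so that $\mathcal{U} \circ \tau_X = \mathrm{id}_{\mathfrak{P}_u(X)}$. In particular $\tau_X$ is injective. Everything will follow once I show that both $\tau_X$ and $\mathcal{U}$ are smooth.

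First I would prove that $\tau_X$ is smooth. Let $P\colon U \to \mathfrak{P}_u(X)$ be a plot; I must verify \textbf{UPD} for $\tau_X \circ P$ at the level of $\mathfrak{P}^2_u(X) = \mathfrak{P}_u(\mathfrak{P}_u(X))$. Given $r_0 \in U$ and $B_0 \in \tau_X(P(r_0))$, write $B_0 = P(r_0)\cap O_0$ with $O_0$ D-open, and take as candidate implicit plot $\Sigma(r) = P(r) \cap O_0$. By construction $\Sigma(r) \in \tau_X(P(r))$ and $\Sigma(r_0) = B_0$, so it remains to check that $\Sigma$ is itself a plot in $\mathfrak{P}_u(X)$: given $x_1 \in \Sigma(r_1) = P(r_1)\cap O_0$, an implicit plot $\sigma$ for $P$ through $(r_1,x_1)$, restricted to the open set $\sigma^{-1}(O_0)$ (open because $O_0$ is D-open and $\sigma$ is a plot), lands in $P(r)\cap O_0 = \Sigma(r)$, which is exactly \textbf{UPD} for $\Sigma$.

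Next I would prove that $\mathcal{U}$ is smooth by a two-level application of \textbf{UPD}. Let $Q\colon W \to \mathfrak{P}^2_u(X)$ be a plot and take $r_0 \in W$, $x_0 \in \mathcal{U}(Q(r_0))$, so $x_0 \in B_0$ for some $B_0 \in Q(r_0)$. Applying \textbf{UPD} for $Q$ at the outer level produces a plot $\Sigma\colon V \to \mathfrak{P}_u(X)$ with $\Sigma(r) \in Q(r)$ and $\Sigma(r_0)=B_0$; applying \textbf{UPD} for the plot $\Sigma$ at the point $x_0 \in \Sigma(r_0)$ produces a plot $\sigma\colon V' \to X$ with $\sigma(r)\in \Sigma(r)$ and $\sigma(r_0)=x_0$. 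Since $\Sigma(r) \in Q(r)$ forces $\Sigma(r) \subseteq \mathcal{U}(Q(r))$, we get $\sigma(r) \in \mathcal{U}(Q(r))$, establishing \textbf{UPD} for $\mathcal{U}\circ Q$.

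Finally I would harvest the three properties. \emph{Induction}: if $\tau_X \circ P$ is a plot, then $P = \mathcal{U}\circ\tau_X\circ P$ is a plot because $\mathcal{U}$ is smooth, and conversely $\tau_X$ is smooth, so the pullback diffeology $\tau_X^*(\p)$ coincides with the union power set diffeology. \emph{Immersion}: apply Proposition \ref{p:immr} to the commutative triangle formed by $\tau_X\colon \mathfrak{P}_u(X)\to\mathfrak{P}^2_u(X)$, by $\mathcal{U}\colon\mathfrak{P}^2_u(X)\to\mathfrak{P}_u(X)$, and by $\mathrm{id}=\mathcal{U}\circ\tau_X$; since $\mathrm{id}$ is trivially a diffeological immersion, so is $\tau_X$. \emph{D-embedding}: $\tau_X$ is D-continuous and injective, and its inverse on the image equals $\mathcal{U}$ restricted to $\tau_X(\mathfrak{P}_u(X))$, which is continuous for the subspace D-topology because $\mathcal{U}$ is smooth hence D-continuous. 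The main obstacle is the careful bookkeeping in the nested \textbf{UPD} verifications, in particular keeping the ``element of'' relation $\Sigma(r)\in Q(r)$ distinct from the ``subset of'' relation $\Sigma(r)\subseteq \mathcal{U}(Q(r))$, and using that plots pull D-open sets back to open sets in the smoothness of $\tau_X$.
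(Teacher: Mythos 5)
Your proposal is correct and follows essentially the same route as the paper: both prove smoothness of $\tau_X$ via the intersection plot $r\mapsto P(r)\cap O_0$ together with the D-openness of $O_0$, both introduce the union map (the paper's $\theta$) and prove its smoothness by the same nested \textbf{UPD} argument, and both conclude from the retraction identity $\theta\circ\tau_X=\mathrm{id}$. Your write-up is in fact slightly more explicit than the paper's in spelling out how the retraction yields the induction, immersion, and D-embedding properties separately.
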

\begin{proof}
To check that $ \tau_X $ is smooth, 
let  $ P:U\rightarrow \mathfrak{P}_{u}(X) $  be a plot and $ r_0\in U, O_0\in\tau_X\circ P(r_0) $.
Then $ O_0= P(r_0)\cap O'_0 $, for some D-open subset $ O'_0\subset X $.
Define $ Q:U\rightarrow \mathfrak{P}_{u}(X) $ by $ Q(r)=P(r)\cap O'_0 $ for all $ r\in U $ so that $ Q(r)\in \tau_X\circ P(r) $ and $ Q(r_0)=O_0 $. If we prove that $ Q $ is a plot in $ \mathfrak{P}_{u}(X) $, then $ \tau_X\circ P $ is a plot in $ \mathfrak{P}^2_{u}(X) $ by definition. 
So let $ s_0\in U $ and $ x_0\in Q(s_0)=P(s_0)\cap O'_0  $.
As $ x_0\in P(s_0) $ and $ P $ is a plot in $ \mathfrak{P}_{u}(X) $, we deduce that  there exists an implicit plot $ \sigma : V \rightarrow X $, defined on an open neighborhood $ V \subset U $ of $ s_0 $ with $ \sigma(s_0)=x_0 $ and $ \sigma(r)\in P(r) $ for all $ r\in V $. 
Also, since $ x_0\in O'_0 $ and $ O'_0 \subset X $ is a D-open subset, after shrinking $ V $, we obtain $ \sigma(r)\in P(r)\cap O'_0 =Q(r)  $ on some neighborhood of $ s_0 $. Thus, $ Q $ is a plot in $ \mathfrak{P}_{u}(X) $.

On the other hand, consider the map $ \theta:\mathfrak{P}^2_{u}(X)\rightarrow \mathfrak{P}_{u}(X) $ taking any collection $ \mathcal{C} $ of the subsets of $ X $ to its union, i.e. $ \theta(\mathcal{C})=\bigcup \mathcal{C} $.
To observe that $ \theta $ is smooth, let $ \Pi:U\rightarrow \mathfrak{P}^2_{u}(X) $ be a plot and $ r_0\in U, x_0\in\theta\circ \Pi(r_0)=\bigcup \Pi(r_0) $.
Then $ x_0\in A_0 $ for some $ A_0\in \Pi(r_0) $, and by \textbf{UPD}, there exists a plot $ Q : V \rightarrow \mathfrak{P}_{u}(X) $, defined on an open neighborhood $ V \subset U $ of $ r_0 $ such that $ Q(r_0)=A_0\ni x_0 $ and $ Q(r)\in\Pi(r) $. 
Again by \textbf{UPD}, one can find an implicit plot $ \sigma : W \rightarrow X $ with $ \sigma(r_0)=x_0 $ and $ \sigma(r)\in Q(r)\subset\bigcup \Pi(r)=\theta\circ \Pi(r) $. Hence $ \theta\circ \Pi $ is a plot in $ \mathfrak{P}_{u}(X) $.
Finally, $ \theta\circ\tau_X(A)=A $ for all $ A\in \mathfrak{P}_{u}(X) $, so that $ \tau_X $ is a strong embedding.
\end{proof}

As a result, for any diffeological space $ X $, one has two infinite sequences
\begin{center}
$ X\stackrel{\imath_X}{\longrightarrow}\mathfrak{P}_{u}(X)\stackrel{\imath_{\mathfrak{P}_{u}(X)}}{\longrightarrow}\mathfrak{P}^2_{u}(X)\longrightarrow\cdots\stackrel{}{\longrightarrow}\mathfrak{P}^n_{u}(X)\stackrel{\imath_{\mathfrak{P}^{n}_{u}(X)}}{\longrightarrow}\mathfrak{P}^{n+1}_{u}(X)\longrightarrow\cdots$
\end{center}
and
\begin{center}
$ \mathfrak{P}_{u}(X)\stackrel{\tau_X}{\longrightarrow}\mathfrak{P}^2_{u}(X)\longrightarrow\cdots\stackrel{}{\longrightarrow}\mathfrak{P}^n_{u}(X)\stackrel{\tau_{\mathfrak{P}^{n-1}_{u}(X)}}{\longrightarrow}\mathfrak{P}^{n+1}_{u}(X)\longrightarrow\cdots$
\end{center}
of consecutive strong embeddings.

\subsection{Smooth selection problem}

Here we state  smooth selection problem in the context of diffeology and explore its connections with smoothness of set-valued maps.
\begin{Definition}
Let  $ X $ and $ Y $ be diffeological spaces. 
Let $ \phi:X \rightarrow\mathfrak{P}^\star(Y) $ be a set-valued map. 
The \textbf{smooth selection problem} of $ \phi $ for given $ x_0\in X $ and $ y_0\in \phi(x_0) $ is finding a \textbf{smooth selection}, i.e., a smooth map $ \sigma:X\rightarrow Y $ such that $ \sigma(x)\in\phi(x) $ for all $ x\in X $, and $ \sigma(x_0)=y_0 $.
Furthermore, we call a smooth map $ \sigma:X\rightarrow Y $ with $ \sigma(x)\in\phi(x) $ for all $ x\in X $, a \textbf{weak smooth selection}.
\end{Definition}

\begin{example}
Let $ \phi:\mathbb{R}\rightarrow\mathfrak{P}^\star(\mathbb{\mathbb{R}}) $ be
given by $ \phi(0)=\{0\} $ and
$ \phi(r)=(-|r|,|r|) $ for $ r\neq 0 $. To solve the smooth selection problem, for every $ x_0\in \mathbb{R} $ and $ y_0\in \phi(x_0) $, consider the straight line $ \sigma:r\mapsto (\dfrac{y_0}{x_0})r $, passing through $ y_0 $ and $ 0 $, as a smooth selection of $ \phi $.

\end{example}

\begin{Lemma}\label{l:ssp}
Suppose that $ \phi:X \rightarrow\mathfrak{P}^\star(Y) $ is a set-valued map and $ f:X'\rightarrow X $ is a smooth map.
If for $ \phi:X \rightarrow\mathfrak{P}^\star(Y) $, the smooth selection problem has solutions, then so is $ \phi\circ f $.
\end{Lemma}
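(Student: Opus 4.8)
The plan is to prove this by a straightforward composition: a solution of the smooth selection problem for $\phi \circ f$ is obtained by pulling back along $f$ a solution of the selection problem for $\phi$. First I would unwind the hypotheses. The composite $\phi \circ f : X' \rightarrow \mathfrak{P}^\star(Y)$ is well defined and still lands in nonempty subsets, since for each $x' \in X'$ the point $f(x') \in X$ forces $(\phi \circ f)(x') = \phi(f(x'))$ to be a nonempty subset of $Y$; thus the smooth selection problem for $\phi \circ f$ is indeed posed. Interpreting the phrase ``the smooth selection problem has solutions'' as ``for every $x_0 \in X$ and every $y_0 \in \phi(x_0)$ there is a smooth selection $\sigma : X \rightarrow Y$ of $\phi$ with $\sigma(x_0) = y_0$'', I must produce, for an arbitrary base point $x_0' \in X'$ and an arbitrary $y_0 \in (\phi \circ f)(x_0')$, a smooth selection of $\phi \circ f$ passing through $y_0$ at $x_0'$.

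Next I would carry out the construction. Set $x_0 := f(x_0') \in X$; then $y_0 \in \phi(f(x_0')) = \phi(x_0)$, so by hypothesis there is a smooth selection $\sigma : X \rightarrow Y$ of $\phi$ with $\sigma(x) \in \phi(x)$ for all $x \in X$ and $\sigma(x_0) = y_0$. Define $\sigma' := \sigma \circ f : X' \rightarrow Y$. Since $f$ and $\sigma$ are smooth, $\sigma'$ is smooth by the composition axiom for smooth maps (Definition \ref{d:diffeolmap}). For every $x' \in X'$ one has $\sigma'(x') = \sigma(f(x')) \in \phi(f(x')) = (\phi \circ f)(x')$, so $\sigma'$ is a selection of $\phi \circ f$; moreover $\sigma'(x_0') = \sigma(f(x_0')) = \sigma(x_0) = y_0$. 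Thus $\sigma'$ solves the smooth selection problem of $\phi \circ f$ at $(x_0', y_0)$, and since the base point $(x_0', y_0)$ was arbitrary, the conclusion follows.

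The argument has no genuine obstacle: the only points requiring care are purely formal, namely checking that $\phi \circ f$ still takes values in $\mathfrak{P}^\star(Y)$ so that the problem is meaningful, and that the composite $\sigma \circ f$ inherits smoothness, which is immediate. The substantive feature being used is that the selection hypothesis is stated uniformly over all admissible base points $(x_0, y_0)$; this uniformity is exactly what lets the prescribed value $y_0 \in \phi(f(x_0'))$ be matched by a selection of $\phi$ through $x_0 = f(x_0')$, which the pullback then transports to a selection of $\phi \circ f$ through $x_0'$.
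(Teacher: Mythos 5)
Your proof is correct and is exactly the straightforward composition argument the paper has in mind (the paper simply states ``The proof is straightforward'' and omits the details): pull back a selection $\sigma$ of $\phi$ through $(f(x_0'),y_0)$ to the selection $\sigma\circ f$ of $\phi\circ f$ through $(x_0',y_0)$. Nothing further is needed.
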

\begin{proof}
The proof is straightforward.
%Let we are given $ x_0\in X' $ and $ y_0\in \phi\circ f(x_0) $. For $ f(x_0) $, by hypothesis, we can find a smooth map $ \sigma:X\rightarrow Y $ such that $ \sigma(x)\in\phi(x) $ for all $ x\in X $, and $ \sigma(f(x_0))=y_0 $. Then $ \sigma\circ f:X'\rightarrow Y $ is a smooth selection for $ \phi\circ f $.
\end{proof}	
% One can observe that the smoothness of a set-valued map $ \phi:X \rightarrow\mathfrak{P}_u^\star(Y) $ is equivalent to solving the smooth selection problem over a covering generating family of $ X $. 
\begin{Proposition}
Let $ X $ and $ Y $ be diffeological spaces, and let $ \phi:X \rightarrow\mathfrak{P}^\star(Y) $ be a set-valued map.
$ \phi $ is smooth if and only if there exists some covering generating family $ \mathcal{C} $ of $ X $ such that for all $ P\in\mathcal{C} $, the smooth selection problem has solutions for $ \phi\circ P $.
\end{Proposition}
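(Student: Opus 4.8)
The plan is to prove the two implications separately, the engine in both cases being the \textbf{UPD} characterization of plots in $\mathfrak{P}_u(Y)$ together with Lemma~\ref{l:ssp}. The guiding observation is that solvability of the smooth selection problem for a set-valued parametrization $\psi:U\to\mathfrak{P}^\star(Y)$ forces \textbf{UPD}: a global smooth selection through $(r_0,y_0)$ is in particular a plot of $Y$ defined near $r_0$ whose values lie in $\psi(r)$, i.e. a local implicit plot. Conversely, \textbf{UPD} only delivers such selections \emph{locally and pointwise}, and the whole difficulty of the statement is concentrated in bridging that gap.

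For the implication ($\Leftarrow$), suppose a covering generating family $\mathcal{C}$ with the stated property is given, and let $Q:U\to X$ be an arbitrary plot. Since $\langle\mathcal{C}\rangle=\p_X$, by definition of the generated diffeology $Q$ is, near each $r_0\in U$, of the form $P\circ F$ with $P\in\mathcal{C}$ and $F$ smooth. Given $y_0\in\phi\circ Q(r_0)$, Lemma~\ref{l:ssp} shows the selection problem is solvable for $(\phi\circ P)\circ F=\phi\circ(P\circ F)$, and the resulting global smooth selection through $(r_0,y_0)$ serves, after restriction, as a local implicit plot for $\phi\circ Q$ at $(r_0,y_0)$. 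As $r_0$ and $y_0$ were arbitrary, $\phi\circ Q$ satisfies \textbf{UPD} and is therefore a plot in $\mathfrak{P}_u(Y)$; hence $\phi$ is smooth.

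For the converse ($\Rightarrow$), assume $\phi$ smooth and set $\mathcal{C}=\{\,P\in\p_X \mid \text{the smooth selection problem is solvable for }\phi\circ P\,\}$. I would first check the covering condition: for a constant plot $P\equiv\{x\}$ and any $y_0\in\phi(x)$ the constant map $\sigma\equiv y_0$ solves the problem, so all constant parametrizations lie in $\mathcal{C}$. Next, Lemma~\ref{l:ssp} shows $\mathcal{C}$ is stable under precomposition with smooth maps, whence $\langle\mathcal{C}\rangle$ is exactly the collection of parametrizations that are locally in $\mathcal{C}$. The statement then reduces to showing $\mathcal{C}$ is \emph{generating}, i.e. that every plot $Q:U\to X$ is, after restriction to a suitable neighborhood of each point, selection-solvable.

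This last reduction is where I expect the main obstacle to lie. Smoothness of $\phi$ gives, via \textbf{UPD} applied to the plot $\phi\circ Q$, a local implicit plot through each \emph{individual} pair $(r_0,y_0)$, but on a neighborhood that may depend on the pair and shrink as $y_0$ varies over $\phi\circ Q(r_0)$. To certify that $Q|_W\in\mathcal{C}$ one instead needs a \emph{single} neighborhood $W$ of $r_0$ carrying, for \emph{every} $r_1\in W$ and every $y_1\in\phi\circ Q(r_1)$, a selection defined on all of $W$. Passing from the pointwise-local selections furnished by \textbf{UPD} to such a uniform, globally-defined-on-$W$ family is a smooth analogue of the continuous-selection and monodromy problem, and I anticipate it cannot be done at the level of bare diffeological spaces without extra input: one would likely need either a partition-of-unity gluing (hence some manifold or locally convex structure on $Y$, matching the ``over an open cover'' formulation announced for the manifold case) or a careful shrinking of the generating family so that each of its plots has a domain small enough to rule out the crossing obstructions. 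I would therefore invest the bulk of the effort in establishing this uniform local solvability, treating the covering and reparametrization-closure steps as routine.
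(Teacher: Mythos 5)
Your ($\Leftarrow$) direction is correct and is essentially the paper's argument, which simply invokes Lemma \ref{l:ssp}: locally $Q=P\circ F$ with $P\in\mathcal{C}$, and a global smooth selection of $\phi\circ P$ through $(F(r_0),y_0)$ pulls back to a local implicit plot for $\phi\circ Q$ through $(r_0,y_0)$, which is exactly \textbf{UPD}.

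For ($\Rightarrow$) you stop short of a proof, so as it stands the proposal is incomplete; but the obstacle you isolate is precisely the step the paper asserts without justification. The paper's proof reads: ``by \textbf{UPD}, there is an open cover $\{U_i\}_{i\in J}$ of $U$ such that the smooth selection problem has solutions for $\phi\circ P|_{U_i}$.'' As you observe, \textbf{UPD} only produces, for each individual pair $(r_0,y_0)$, a neighborhood depending on that pair together with a selection through that single pair; it does not produce one neighborhood of $r_0$ on which selections exist through every $(r_1,y_1)$ with $r_1$ in that neighborhood. Your suspicion that this uniformity is genuinely unavailable is well founded. Take $X=Y=\R$ with the standard diffeology, $\phi(0)=\R$ and $\phi(x)=\R\setminus\{1/x\}$ for $x\neq 0$: constant local selections witness \textbf{UPD}, so $\phi$ is smooth. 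Yet any covering generating family of $\R$ must contain some $Q$ admitting a smooth $F$ on a neighborhood $W$ of $0$ in $\R$ with $Q\circ F=\mathrm{id}_W$. Choosing $c>0$ with $[0,c]\subset W$, $s_1=F(c)$ and $y_1=2/c\in\phi(Q(s_1))$, any selection $\sigma$ of $\phi\circ Q$ with $\sigma(s_1)=y_1$ gives the continuous function $g=\sigma\circ F$ on $W$ satisfying $g(c)>1/c$, $g(r)\neq 1/r$ for $r\neq 0$, and $g$ bounded near $0$ (hence $g(r)<1/r$ for small $r>0$), contradicting the intermediate value theorem. So under the reading of ``has solutions'' (solvable for every initial condition) that your ($\Leftarrow$) argument and Lemma \ref{l:ssp} require, the forward implication fails; the correct statement at this level of generality is the pointwise-local one of Propositions \ref{p:ssp} and \ref{p:svlman}. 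In short: you did not fill the gap, but the gap cannot be filled, and the paper's own proof of this step is erroneous.
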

\begin{proof}
Assume that $ \phi:X \rightarrow\mathfrak{P}^\star(Y) $ is smooth and let $ P:U\rightarrow X $ be any plot in $ X $. 
Then $ \phi\circ P $ is a plot in $ \mathfrak{P}^\star(Y) $, and by \textbf{UPD}, there is an open cover $ \{U_i\}_{i\in J} $ of $ U $ such that the smooth selection problem has solutions for $ \phi\circ P|_{U_i} $.
Notice that the collection of such restrictions for all plots in $ X $ constitutes a covering generating family for $ X $.
The converse is trivial in view of Lemma \ref{l:ssp}.
\end{proof}	

Similarly, the smoothness of a set-valued map $ \phi:X \rightarrow\mathfrak{P}_w^\star(Y) $ is equivalent to solving the weak smooth selection problem over a covering generating family of $ X $. 
\begin{Proposition}\label{p:ssp}
Let $ X $ and $ Y $ be diffeological spaces, and let $ \phi:X \rightarrow\mathfrak{P}^\star(Y) $ be a set-valued map.
If for each $ x_0\in X $ and $ y_0\in\phi(x_0) $, there exist a D-open neighborhood $ O\subset X $ of $ x_0 $ and a smooth map $ \alpha:O\rightarrow Y $ such that $ \alpha(x_0)=y_0 $ and $ \alpha(x)\in \phi(x) $ for all $ x\in O $, then $ \phi:X \rightarrow\mathfrak{P}_u^\star(Y) $ is smooth with respect to the union power set diffeology.
\end{Proposition}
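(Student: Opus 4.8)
The plan is to prove smoothness directly from the characterization \textbf{UPD} of plots in the union power space. So one fixes an arbitrary plot $P : U \rightarrow X$ and must show that $\phi \circ P : U \rightarrow \mathfrak{P}_u^\star(Y)$ satisfies \textbf{UPD}: around every $r_0 \in U$ and for every initial condition $y_0 \in \phi(P(r_0))$ there must exist a local implicit plot $\sigma$ valued in $\phi \circ P$ with $\sigma(r_0) = y_0$. Reducing the smoothness of $\phi$ to this local condition is the whole structure of the argument.

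To build $\sigma$, I would set $x_0 = P(r_0)$, so that $y_0 \in \phi(x_0)$, and feed the pair $(x_0, y_0)$ into the hypothesis. This yields a D-open neighborhood $O \subseteq X$ of $x_0$ together with a smooth map $\alpha : O \rightarrow Y$ satisfying $\alpha(x_0) = y_0$ and $\alpha(x) \in \phi(x)$ for every $x \in O$. The key move is then to pull $O$ back along $P$: since every plot is D-continuous (smooth maps are D-continuous, as recalled after the definition of the D-topology), the set $V := P^{-1}(O)$ is open in $U$, and it contains $r_0$ because $P(r_0) = x_0 \in O$. On $V$ the restriction $P|_V$ takes values in $O$, hence is a plot for the subset diffeology of $O$, and I would define $\sigma := \alpha \circ P|_V : V \rightarrow Y$.

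It then remains to check that $\sigma$ has the three required properties, all of which are immediate: $\sigma$ is smooth as a composite of smooth maps, so it is a plot in $Y$; for every $r \in V$ one has $P(r) \in O$, whence $\sigma(r) = \alpha(P(r)) \in \phi(P(r)) = (\phi \circ P)(r)$; and $\sigma(r_0) = \alpha(x_0) = y_0$. Thus $\sigma$ is the desired local implicit plot, \textbf{UPD} holds for $\phi \circ P$, and since $P$ was arbitrary, $\phi : X \rightarrow \mathfrak{P}_u^\star(Y)$ is smooth.

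The argument is essentially mechanical, so there is no serious obstacle; the only point requiring care is the passage from the D-open set $O \subseteq X$ to an honest open set in the Euclidean domain $U$. This is exactly where D-continuity of the plot $P$ is used, and it explains why the hypothesis is phrased with a \emph{D-open} neighborhood rather than an arbitrary one: without D-continuity one could not guarantee that $V = P^{-1}(O)$ is open, and the local implicit plot would then fail to be defined on a neighborhood of $r_0$.
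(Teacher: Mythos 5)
Your proof is correct and follows essentially the same route as the paper's: apply the hypothesis at $x_0=P(r_0)$, pull the D-open set $O$ back along the plot $P$ to get an open $V=P^{-1}(O)$ (open by the very definition of the D-topology), and take $\sigma=\alpha\circ P|_V$ as the local implicit plot. Your closing remark correctly identifies the one point of care, namely why $O$ must be D-open.
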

\begin{proof}
Let $ P:U\rightarrow X $ be a plot, let $ r_0\in U $ and $ y_0\in\phi\circ P(r_0) $. By hypothesis, there exist a D-open subset $ P(r_0)\in O $ of $ X $ and a smooth map $ \alpha:O\rightarrow Y $ such that $ \alpha(P(r_0))=y_0 $ and $ \alpha(x)\in \phi(x) $ for all $ x\in O $. Set $ V=P^{-1}(O) $, which is an open neighborhood of $ r_0 $ in $ U $. Then $ \alpha\circ P|_V $ is a plot in $ Y $ with $ \alpha\circ P|_V (r_0)=y_0 $ and $ \alpha\circ P|_V (r)\in \phi\circ P(r) $ for all $ r\in V $.
\end{proof}	
In particular, if one can solve smooth selection problem for a set-valued map $ \phi:X \rightarrow\mathfrak{P}^\star(Y) $,
then it is smooth with respect to the union power set diffeology.
Thus, smoothness is a necessary, but not a sufficient, condition for smooth selection problem.
%In fact, the set-valued map $ \mathbb{T}_{\alpha}\hookrightarrow\mathfrak{P}^\star(\mathbb{R}) $ is smooth, while it does not have any smooth selection (even locally), because the principal bundle $ \mathbb{R}\rightarrow\mathbb{T}_{\alpha} $ is not trivial.
% (idea: principle bundles)
%Remark 2.15. It follows from the above theorem that a principal bundle is trivial if and only if it has a smooth global section. 

For manifolds, however, the smoothness is equivalent to solving the smooth selection problem over a D-open cover of $ X $.
%locally with respect to the D-topology of $ X $.

\begin{Proposition}\label{p:svlman}
Let $ M $ be a manifold, $ X $ be a diffeological space and let $ \phi:M \rightarrow\mathfrak{P}^\star(X) $ be a set-valued map.
Then 
$ \phi $ is smooth with respect to the union power set diffeology if and only if
for each $ p_0\in M $ and $ x_0\in\phi(p_0) $, there exist a D-open neighborhood $  O\subset M $ of $ p_0 $ and smooth map $ \alpha:O\rightarrow X $ such that $ \alpha(p_0)=x_0 $ and $ \alpha(p)\in \phi(p) $ for all $ p\in O $.
\end{Proposition}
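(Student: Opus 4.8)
The plan is to treat the two implications separately, the backward one being already available. Since a manifold $M$ is a diffeological space under its nebulae diffeology and its D-topology coincides with the manifold topology, the backward implication is precisely Proposition \ref{p:ssp} applied with $X=M$ and $Y=X$: the hypothesis furnishes, through every pair $(p_0,x_0)$ with $x_0\in\phi(p_0)$, a smooth map $\alpha$ defined on a D-open neighborhood and selecting into $\phi$, which is exactly the input Proposition \ref{p:ssp} requires to conclude that $\phi:M\rightarrow\mathfrak{P}_u^\star(X)$ is smooth. Thus only the forward implication calls for an argument specific to manifolds.

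For the forward implication I would suppose $\phi$ smooth and fix $p_0\in M$ together with $x_0\in\phi(p_0)$. The idea is to exploit the one feature distinguishing a manifold from a general diffeological space, namely that charts are invertible so that the inverse chart is itself a plot. Concretely, choose a chart $\psi:O\rightarrow\tilde U$ around $p_0$, with $\tilde U$ an open subset of a Euclidean space and $\psi(p_0)=r_0$, and put $P=\psi^{-1}:\tilde U\rightarrow M$, which is a plot of $M$. Smoothness of $\phi$ makes $\phi\circ P:\tilde U\rightarrow\mathfrak{P}_u(X)$ a plot, so the defining condition \textbf{UPD} applied at $r_0$ with initial condition $x_0\in\phi(P(r_0))=\phi(p_0)$ yields an open neighborhood $V\subset\tilde U$ of $r_0$ and a local implicit plot $\sigma:V\rightarrow X$ with $\sigma(r_0)=x_0$ and $\sigma(r)\in\phi(P(r))$ for every $r\in V$.

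It then remains to transport $\sigma$ back to $M$ through the chart. Setting $O'=P(V)$, a D-open neighborhood of $p_0$, I would define $\alpha=\sigma\circ\psi|_{O'}:O'\rightarrow X$. This $\alpha$ is smooth, being a composite of the chart $\psi|_{O'}$ with the plot $\sigma$; it satisfies $\alpha(p_0)=\sigma(r_0)=x_0$; and for $p\in O'$, writing $r=\psi(p)$ so that $P(r)=p$, one gets $\alpha(p)=\sigma(r)\in\phi(P(r))=\phi(p)$. This is the required local smooth selection, completing the forward direction.

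The delicate point, and the place where the manifold hypothesis is essential, is the passage in the forward direction from the plot-level data supplied by \textbf{UPD} to an honest smooth selection defined on an open subset of $M$ itself. In a general diffeological space plots cannot be inverted, so a local implicit plot $\sigma$ living over a parameter domain need not descend to a selection on the space; this is exactly why, for arbitrary $X$, smoothness is only a \emph{necessary} condition for the smooth selection problem, as recorded after Proposition \ref{p:ssp}. For a manifold the inverse chart $P=\psi^{-1}$ plays a double role, being simultaneously a plot and a diffeomorphism onto a neighborhood of $p_0$, and it is precisely this that allows $\psi$ to carry $\sigma$ back to a genuine smooth selection $\alpha$ on $M$.
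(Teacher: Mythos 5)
Your proposal is correct and follows essentially the same route as the paper's proof: the backward direction is delegated to Proposition \ref{p:ssp}, and the forward direction uses a chart around $p_0$ (viewed as a plot of $M$), applies \textbf{UPD} to $\phi$ composed with that plot, and transports the resulting local implicit plot $\sigma$ back to a selection $\alpha$ on $M$ via the inverse chart. The only difference is notational (you write the chart as a map $M\to\R^n$ and invert it, while the paper writes it as a parametrization $U\to M$), and your closing remark correctly identifies invertibility of charts as the point where the manifold hypothesis enters.
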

\begin{proof}
The ``if" direction is clear by Proposition \ref{p:ssp}.
For ``only if", assume that $ \phi:M \rightarrow\mathfrak{P}_u^\star(X) $ is smooth, and let $ p_0\in M $ and $ x_0\in\phi(p_0) $.
Choose a chart $ \psi:U\rightarrow U'\subset M $ at $ p_0 $ and let $ p_0=\psi(r_0) $, for some $ r_0\in U $. Since $ \phi\circ\psi $ is a plot in $ \mathfrak{P}_u^\star(X) $,
there exist an open neighborhood $ V\subset U $ of $ r_0 $ and a plot $ \sigma:V\rightarrow Y $ such that $ \sigma(r_0)=x_0 $ and $ \sigma(r)\in \phi\circ\psi(r) $ for all $ r\in V $.
Now let $ O=\psi(V)\subset U' $ and $ \alpha=\sigma\circ \psi^{-1}|_{O} $. Then $ \alpha(p_0)=\sigma\circ \psi^{-1}(p_0)=\sigma(r_0)=x_0 $ and
$ \alpha(p)=\sigma\circ \psi^{-1}(p)\in\phi\circ\psi(\psi^{-1}(p))=\phi(p)  $, for all $ p\in O $.
\end{proof}	 
As a result, we observe that the notion of a smooth set-valued map is compatible with the classical one between Euclidean spaces (see, e.g., \cite[Theorem 2.2.1]{SZ}).

\subsection{Smooth relations and power set diffeologies}

It is well-known that there is a natural one-to-one correspondence between set-valued maps and relations.
In fact, if  $ \phi:X\rightarrow\mathfrak{P}(Y) $  is
a set-valued map, 
the subset 
$ Graph(\phi) =\{(x,y)\mid y\in \phi(x)\}$
of $ X\times Y $ is its corresponding relation.
On the other hand, a relation $ R $  from  $ X $ to  $ Y $ define a set-valued map $ \phi:X\rightarrow\mathfrak{P}(Y) $  by $ \phi(x)=\{y\in Y\mid (x,y)\in R \} $, and we get  $  Graph(\phi)=R $.
According to this one-to-one correspondence, a smooth relation between diffeological spaces makes sense.

\begin{Proposition}
Let  $ X $ and $ Y $ be diffeological spaces. 
Suppose that $ \phi:X\rightarrow\mathfrak{P}(Y) $ is a set-valued map and $  Graph(\phi) $ is its corresponding relation, as a subspace of $ X\times Y $. Then
\begin{enumerate}
\item[(i)] 
$ \phi:X\rightarrow\mathfrak{P}_{w}(Y) $ is smooth if and only if the map $ \Pr_1: Graph(\phi)\rightarrow X $ is a weak subduction.  
\item[(ii)] 
$ \phi:X\rightarrow\mathfrak{P}_{u}(Y) $ is smooth if and only if the map $ \Pr_1: Graph(\phi)\rightarrow X $ is a diffeological submersion.  	
\end{enumerate} 	

\end{Proposition}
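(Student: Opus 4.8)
The plan is to unwind all the definitions involved and observe that the lifting/section conditions defining \emph{weak subduction} and \emph{diffeological submersion} match, almost verbatim, the selection conditions \textbf{WPD} and \textbf{UPD}. Two preliminary facts drive everything. First, since $ Graph(\phi) $ carries the subspace diffeology from $ X\times Y $, a plot $ Q:V\rightarrow Graph(\phi) $ is exactly a pair $ Q=(Q_X,Q_Y) $ of plots $ Q_X:V\rightarrow X $ and $ Q_Y:V\rightarrow Y $ with $ Q_Y(r)\in\phi(Q_X(r)) $ for all $ r $. Second, $ \Pr_1 $ is the restriction of a projection, hence automatically smooth, and $ \Pr_1(Graph(\phi))=\{x\in X\mid\phi(x)\neq\emptyset\} $.

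For part (i), I would first treat the forward direction. Given a plot $ P:U\rightarrow X $ and $ r_0\in U $ with $ P(r_0)\in\Pr_1(Graph(\phi)) $, i.e. $ \phi(P(r_0))\neq\emptyset $, smoothness of $ \phi $ makes $ \phi\circ P $ a plot of $ \mathfrak{P}_{w}(Y) $; then \textbf{WPD} yields an open $ V\ni r_0 $ and a local selection plot $ \sigma:V\rightarrow Y $ with $ \sigma(r)\in\phi(P(r)) $. The map $ L=(P|_V,\sigma):V\rightarrow Graph(\phi) $ is then a plot lifting $ P|_V $ along $ \Pr_1 $, so $ \Pr_1 $ is a weak subduction. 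For the converse I reverse this: given a plot $ P $ and $ r_0 $ with $ \phi(P(r_0))\neq\emptyset $, the weak subduction hypothesis produces a local lift $ L:V\rightarrow Graph(\phi) $ of $ P|_V $; its second component $ \sigma=\Pr_2\circ L $ is a plot in $ Y $ with $ \sigma(r)\in\phi(P(r)) $, which is exactly a local selection plot certifying, via \textbf{WPD}, that $ \phi\circ P $ is a plot of $ \mathfrak{P}_{w}(Y) $. Hence $ \phi $ is smooth.

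For part (ii), the crux is identifying the pullback. For a plot $ P:U\rightarrow X $, the fibre product is $ P^{*}Graph(\phi)=\{(r,(P(r),y))\mid r\in U,\ y\in\phi(P(r))\} $, with $ P^{*}\Pr_1 $ the projection to $ U $. A smooth local section $ s $ of $ P^{*}\Pr_1 $ through a point $ (r_0,(P(r_0),y_0)) $ is forced to have the form $ s(r)=(r,(P(r),\sigma(r))) $, and, via the fibre-product diffeology, its smoothness is equivalent to $ \sigma:O\rightarrow Y $ being a plot with $ \sigma(r_0)=y_0 $ and $ \sigma(r)\in\phi(P(r)) $. Thus $ P^{*}\Pr_1 $ is a submersion exactly when, for every $ r_0\in U $ and every $ y_0\in\phi(P(r_0)) $, such a local implicit plot exists, which is precisely condition \textbf{UPD} for $ \phi\circ P $. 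Quantifying this equivalence over all plots $ P $ gives: $ \Pr_1 $ is a diffeological submersion iff $ \phi\circ P $ is a plot of $ \mathfrak{P}_{u}(Y) $ for every plot $ P $, i.e. iff $ \phi $ is smooth.

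I expect the only genuinely delicate step to be the bookkeeping in part (ii): correctly describing the fibre-product diffeology on $ P^{*}Graph(\phi) $ and checking that a smooth section of $ P^{*}\Pr_1 $ is exactly a local implicit plot $ \sigma $. The conceptual point worth emphasizing is \emph{why} (i) and (ii) differ: weak subduction asks only for the existence of some lift with no prescribed value, mirroring the selection plot of \textbf{WPD} which carries no initial condition, whereas the submersion/section condition requires a section through each prescribed point $ (r_0,y_0) $, mirroring the initial condition $ \sigma(r_0)=x_0 $ in \textbf{UPD}. This is the structural reason the Proposition pairs $ \mathfrak{P}_{w} $ with weak subduction and $ \mathfrak{P}_{u} $ with diffeological submersion.
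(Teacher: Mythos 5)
Your proof is correct and is essentially the argument the paper has in mind: the paper dismisses this Proposition with ``The proof is straightforward,'' and your careful unwinding of \textbf{WPD}/\textbf{UPD} against the lift and section conditions (including the identification of plots of $Graph(\phi)$ with pairs $(Q_X,Q_Y)$ satisfying $Q_Y(r)\in\phi(Q_X(r))$, and of sections of $P^{*}\Pr_1$ with local implicit plots) is exactly that straightforward verification, spelled out.
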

\begin{proof}
The proof is straightforward.
\end{proof} 
This suggests to define smooth relation as follows  (see also \cite[Exercise 62(2)]{Igdiff}):
\begin{Definition}
Let $  R $ be a relation from a diffeological space $ X $ to a diffeological space $ Y $.
Consider $  R $ as a subspace of $ X\times Y $.
\begin{enumerate}
\item[(i)] 
$  R $  is smooth with respect to the weak power set diffeology if and only if $ \Pr_1:R\rightarrow X $ is a weak subduction.  
\item[(ii)] 
$  R $  is smooth with respect to the union power set diffeology if and only if $ \Pr_1:R\rightarrow X $ is a diffeological submersion.  
\end{enumerate} 	
We denote by $ \mathrm{Rel}^{\infty}(X,Y) $
the space of smooth (partially defined) relations from $ X $ to $ Y $
as a subspace of $ \mathfrak{P}_{u}(X\times Y) $. 
\end{Definition}

\begin{example}
Any equivalence relation $ R $ on a diffeological space $ X $ is smooth with respect to the weak power set diffeology.
\end{example}
\begin{Proposition}
Let $ X $ and $ Y $ be diffeological spaces. 
The set-valued map
$ Graph:C^{\infty}(X,\mathfrak{P}_{u}(Y))\rightarrow \mathrm{Rel}^{\infty}(X,Y) $ taking
$ f \mapsto Graph(f) $
is smooth.
In other words, the functional diffeology on $ C^{\infty}(X,\mathfrak{P}_{u}(Y)) $ is finer than
the subspace diffeology on it, inherited from the union power space $  \mathfrak{P}_{u}(X\times Y)  $.
\end{Proposition}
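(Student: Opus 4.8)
The plan is to unwind both diffeologies down to the level of plots and verify the \textbf{UPD} criterion inside $\mathfrak{P}_{u}(X\times Y)$. First I would check that $Graph$ is well defined as a map into $\mathrm{Rel}^{\infty}(X,Y)$: for a fixed smooth $f\in C^{\infty}(X,\mathfrak{P}_{u}(Y))$, the earlier Proposition says precisely that $\Pr_1:Graph(f)\to X$ is a diffeological submersion, which is the defining property of a smooth relation for the union power set diffeology, so $Graph(f)\in\mathrm{Rel}^{\infty}(X,Y)$. Since $\mathrm{Rel}^{\infty}(X,Y)$ carries the subspace diffeology inherited from $\mathfrak{P}_{u}(X\times Y)$ and the image of $Graph\circ P$ already lies in $\mathrm{Rel}^{\infty}(X,Y)$, it suffices to prove that for every plot $P:U\to C^{\infty}(X,\mathfrak{P}_{u}(Y))$ of the functional diffeology, the composite $Graph\circ P:U\to\mathfrak{P}_{u}(X\times Y)$ is itself a plot.

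Next I would verify the \textbf{UPD} condition for $Graph\circ P$. Fix $r_0\in U$ and an initial condition $(x_0,y_0)\in Graph\bigl(P(r_0)\bigr)$, that is, $y_0\in P(r_0)(x_0)$. The key device is the smoothness of evaluation: since $\mathrm{ev}:C^{\infty}(X,\mathfrak{P}_{u}(Y))\times X\to\mathfrak{P}_{u}(Y)$ is smooth and $r\mapsto(P(r),x_0)$ is a plot in the product (its first factor is $P$, its second is the constant map at $x_0$), the composite $E:U\to\mathfrak{P}_{u}(Y)$, $E(r)=P(r)(x_0)$, is a plot in $\mathfrak{P}_{u}(Y)$. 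Because $y_0\in E(r_0)$, applying \textbf{UPD} to $E$ yields an open neighborhood $V\subset U$ of $r_0$ and a plot $\beta:V\to Y$ with $\beta(r_0)=y_0$ and $\beta(r)\in E(r)=P(r)(x_0)$ for every $r\in V$.

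Finally I would assemble the local implicit plot in $X\times Y$ by holding the first coordinate fixed: set $\widetilde{\sigma}:V\to X\times Y$, $\widetilde{\sigma}(r)=(x_0,\beta(r))$. Its first component is constant and its second is the plot $\beta$, so $\widetilde{\sigma}$ is a plot for the product diffeology; moreover $\beta(r)\in P(r)(x_0)$ gives $\widetilde{\sigma}(r)\in Graph(P(r))$ for all $r\in V$, while $\widetilde{\sigma}(r_0)=(x_0,y_0)$. This is exactly a local implicit plot witnessing \textbf{UPD} for $Graph\circ P$ at $(r_0,(x_0,y_0))$, so $Graph\circ P$ is a plot in $\mathfrak{P}_{u}(X\times Y)$, hence a plot for the subspace diffeology of $\mathrm{Rel}^{\infty}(X,Y)$; thus $Graph$ is smooth. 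Since $Graph$ is injective (a map is determined by its graph), smoothness means the functional diffeology is contained in the pullback under $Graph$ of the subspace diffeology, i.e. it is the finer of the two. I do not expect a serious obstacle here; the only point requiring care is recognizing that the first coordinate may be kept constant, so that producing the $X$-component of the implicit plot reduces to a single application of \textbf{UPD} to the evaluation plot $E$, rather than requiring a simultaneous lift of both coordinates of $(x_0,y_0)$.
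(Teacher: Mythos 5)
Your proposal is correct and follows essentially the same route as the paper: the paper likewise forms the evaluation plot $\mathrm{ev}(P,c_{x_0})$ (your $E$), applies \textbf{UPD} to obtain a local implicit plot $\sigma$ in $Y$ with $\sigma(r_0)=y_0$, and then pairs it with the constant plot at $x_0$ to get the required local implicit plot $(x_0,\sigma(r))$ in $X\times Y$. Your extra preliminary check that $Graph(f)$ indeed lands in $\mathrm{Rel}^{\infty}(X,Y)$ is a reasonable addition that the paper leaves implicit.
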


\begin{proof}
Assume that $ P:U\rightarrow C^{\infty}(X,\mathfrak{P}_{u}(Y)) $ is a plot with respect to the functional diffeology and
let $ r_0 \in U $ and $ (x_0,y_0)\in Graph\circ P(r_0) $, which means $ y_0\in P(r_0)(x_0) $.
For the constant plot $ c_{x_0}:U\rightarrow X $ with the value $ x_0 $, the map
$ ev(P, c_{x_0}) $ is a plot in $ \mathfrak{P}_{u}(Y) $.
So there is a plot $ \sigma:V\rightarrow Y $, where $ V \subset U $ is an open neighborhood of $ r_0 $, such that $ \sigma(r_0)=y_0 $ and
$ \sigma(r)\in ev(P, c_{x_0})(r)=P(r)(x_0) $, for every $ r \in V $.
Now consider the plot $ \tau=(c_{x_0}|_{V},\sigma) $ in $ X\times Y $ defined on $ V $. Obviously,  $ \tau(r)=(x_0, \sigma(r)) \in Graph\circ P(r) $ for every $ r\in V $, and that $ \tau(r_0)=(x_0, \sigma(r_0))=(x_0,y_0) $.
Hence $ Graph\circ P $ is a plot in $  \mathrm{Rel}^{\infty}(X,Y)  $ with respect to the union power set diffeology.
\end{proof}

\begin{Proposition}
%	For the union power set diffeology on   $ C^{\infty}(X,\mathfrak{P}_{u}(Y)) $, 
The maps $ \mathrm{def}: \mathrm{Rel}^{\infty}(X,Y)\rightarrow \mathfrak{P}_{u}(X)  $ and $ \mathrm{Im}: \mathrm{Rel}^{\infty}(X,Y)\rightarrow \mathfrak{P}_{u}(Y)  $ are smooth.
\end{Proposition}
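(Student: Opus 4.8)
The plan is to recognize $\mathrm{def}$ and $\mathrm{Im}$ as restrictions of hyper-maps that are already known to be smooth, so that almost nothing new has to be proved. First I would unwind the definitions. A relation $R \subset X\times Y$ corresponds to the set-valued map $\phi(x) = \{y \mid (x,y)\in R\}$, and its domain of definition is $\mathrm{def}(R) = \{x\in X \mid \phi(x)\neq\emptyset\} = \Pr_1(R)$, while its image is $\mathrm{Im}(R) = \bigcup_{x\in X}\phi(x) = \Pr_2(R)$, where $\Pr_1 : X\times Y\to X$ and $\Pr_2 : X\times Y\to Y$ are the canonical projections. Consequently, on the subset $\mathrm{Rel}^{\infty}(X,Y)\subset\mathfrak{P}_{u}(X\times Y)$, the maps $\mathrm{def}$ and $\mathrm{Im}$ coincide respectively with the induced hyper-maps $(\Pr_1)_\star$ and $(\Pr_2)_\star$.

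Second, the projections $\Pr_1$ and $\Pr_2$ are smooth for the product diffeology (each $\pi_i$ is a subduction, hence smooth). Applying item (1) of the earlier Proposition on induced hyper-maps with $f=\Pr_1$ and $f=\Pr_2$, the maps $(\Pr_1)_\star : \mathfrak{P}_{u}(X\times Y)\to\mathfrak{P}_{u}(X)$ and $(\Pr_2)_\star : \mathfrak{P}_{u}(X\times Y)\to\mathfrak{P}_{u}(Y)$ are smooth. Third, since $\mathrm{Rel}^{\infty}(X,Y)$ carries the subset diffeology inherited from $\mathfrak{P}_{u}(X\times Y)$, the inclusion $\imath : \mathrm{Rel}^{\infty}(X,Y)\hookrightarrow\mathfrak{P}_{u}(X\times Y)$ is smooth, and $\mathrm{def} = (\Pr_1)_\star\circ\imath$ together with $\mathrm{Im} = (\Pr_2)_\star\circ\imath$ are composites of smooth maps, hence smooth.

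If one prefers a self-contained argument that avoids citing the hyper-map Proposition, I would verify the conclusion directly through \textbf{UPD}. Given a plot $P : U\to\mathrm{Rel}^{\infty}(X,Y)$, that is, a plot into $\mathfrak{P}_{u}(X\times Y)$ taking values in $\mathrm{Rel}^{\infty}(X,Y)$, fix $r_0\in U$ and $x_0\in\mathrm{def}(P(r_0))=\Pr_1(P(r_0))$, and choose $y_0$ with $(x_0,y_0)\in P(r_0)$. Applying \textbf{UPD} to $P$ yields a local implicit plot $\sigma : V\to X\times Y$ with $\sigma(r)\in P(r)$ for all $r\in V$ and $\sigma(r_0)=(x_0,y_0)$; then $\Pr_1\circ\sigma : V\to X$ is smooth, satisfies $\Pr_1\circ\sigma(r)\in\Pr_1(P(r))=\mathrm{def}(P(r))$, and $\Pr_1\circ\sigma(r_0)=x_0$, so $\mathrm{def}\circ P$ meets \textbf{UPD} and is a plot in $\mathfrak{P}_{u}(X)$. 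The case of $\mathrm{Im}$ is word-for-word the same with $\Pr_2$ in place of $\Pr_1$.

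There is essentially no obstacle here: the only care required is the set-theoretic identifications $\mathrm{def}(R)=\Pr_1(R)$ and $\mathrm{Im}(R)=\Pr_2(R)$, and the elementary observation that restricting a smooth map to a subspace equipped with the subset diffeology preserves smoothness. The statement is therefore an immediate corollary of the already established behaviour of $f_\star$ under smooth $f$.
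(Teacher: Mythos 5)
Your proposal is correct, and your ``self-contained'' variant is essentially word-for-word the paper's own proof: the authors apply \textbf{UPD} to a plot $P$ into $\mathrm{Rel}^{\infty}(X,Y)\subset\mathfrak{P}_{u}(X\times Y)$, obtain a local implicit plot $\sigma:V\rightarrow X\times Y$ through a chosen point $(x_0,y_0)\in P(r_0)$, and compose with $\Pr_1$ (resp.\ $\Pr_2$). Your primary route --- identifying $\mathrm{def}=(\Pr_1)_\star$ and $\mathrm{Im}=(\Pr_2)_\star$ on $\mathrm{Rel}^{\infty}(X,Y)$ and invoking item (1) of the proposition on induced hyper-maps together with smoothness of restriction to a subset diffeology --- is a genuinely different and arguably cleaner packaging of the same computation: it buys brevity and makes the functoriality of $\mathfrak{P}_u$ do the work, at the cost of having to record the set-theoretic identities $\mathrm{def}(R)=\Pr_1(R)$ and $\mathrm{Im}(R)=\Pr_2(R)$, which you do. Both arguments are complete; there is no gap.
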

\begin{proof}
Let   $ P:U\rightarrow \mathrm{Rel}^{\infty}(X,Y) $ be a plot and let $r_0\in U, x_0\in \mathrm{def}\circ P(r_0) $.
Then there exists an element $ y_0\in Y $ such that $ y_0\in P(r_0)(x_0) \neq\emptyset $, or $ (x_0,y_0)\in P(r_0) $. There are an open neighborhood  $ V \subset U $ of $ r_0 $ and a plot $  \sigma:V\rightarrow X\times Y $ with $ \sigma(r)\in P(r) $ and $ \sigma(r_0)=(x_0,y_0) $. Then
$ \Pr_1\circ\sigma:V\rightarrow X $ is a plot such that $ \Pr_1\circ\sigma(r)\in \mathrm{def}\circ P(r) $ and $  \Pr_1\circ\sigma(r_0)=x_0 $. 

Just as the above argument, one can see that $ \mathrm{Im}: \mathrm{Rel}^{\infty}(X,Y)\rightarrow \mathfrak{P}_{u}(Y)  $ is smooth.
\end{proof}

\begin{Cor}
If $ C^{\infty}(X,\mathfrak{P}_{u}(Y)) $ is endowed with the functional diffeology, then
$ \mathrm{def}: C^{\infty}(X,\mathfrak{P}_{u}(Y))\rightarrow \mathfrak{P}_{u}(X)  $ and $ \mathrm{Im}: C^{\infty}(X,\mathfrak{P}_{u}(Y))\rightarrow \mathfrak{P}_{u}(Y)  $ are smooth.
\end{Cor}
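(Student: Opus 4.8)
The plan is to recognize this corollary as an immediate consequence of the two preceding propositions, obtained simply by composing smooth maps. The key observation is that the maps $\mathrm{def}$ and $\mathrm{Im}$ appearing here on $C^{\infty}(X,\mathfrak{P}_{u}(Y))$ are nothing but the composites of the $Graph$ map with the maps $\mathrm{def}$ and $\mathrm{Im}$ already defined on $\mathrm{Rel}^{\infty}(X,Y)$.

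First I would unwind the definitions to confirm the factorization. For a smooth set-valued map $f \in C^{\infty}(X,\mathfrak{P}_{u}(Y))$ one has $Graph(f) = \{(x,y)\mid y \in f(x)\}$, whence $\mathrm{def}(Graph(f)) = \Pr_1(Graph(f)) = \{x \in X \mid f(x) \neq \emptyset\}$ and $\mathrm{Im}(Graph(f)) = \Pr_2(Graph(f)) = \bigcup_{x \in X} f(x)$. These coincide with the intended domain-of-definition and total-image maps attached to $f$, so that
$$\mathrm{def} = \mathrm{def} \circ Graph, \qquad \mathrm{Im} = \mathrm{Im} \circ Graph,$$
where the right-hand occurrences of $\mathrm{def}$ and $\mathrm{Im}$ are the maps on $\mathrm{Rel}^{\infty}(X,Y)$ treated in the previous proposition.

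Then smoothness follows at once. By the proposition on the $Graph$ map, the map $Graph : C^{\infty}(X,\mathfrak{P}_{u}(Y)) \rightarrow \mathrm{Rel}^{\infty}(X,Y)$ is smooth when the source carries the functional diffeology, and by the immediately preceding proposition both $\mathrm{def} : \mathrm{Rel}^{\infty}(X,Y) \rightarrow \mathfrak{P}_{u}(X)$ and $\mathrm{Im} : \mathrm{Rel}^{\infty}(X,Y) \rightarrow \mathfrak{P}_{u}(Y)$ are smooth. Since a composition of smooth maps between diffeological spaces is smooth, both composites are smooth, which is exactly the assertion. There is no genuine obstacle here; the only point requiring care is the bookkeeping of the factorization, namely verifying that the $\mathrm{def}$ and $\mathrm{Im}$ named in the corollary really agree with the composites through $Graph$ rather than being independently introduced maps. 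Once this identification is made explicit, the result is purely formal.
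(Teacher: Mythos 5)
Your proposal is correct and matches the paper's intent: the corollary is stated without proof precisely because it follows by composing the smooth map $Graph : C^{\infty}(X,\mathfrak{P}_{u}(Y))\rightarrow \mathrm{Rel}^{\infty}(X,Y)$ with the smooth maps $\mathrm{def}$ and $\mathrm{Im}$ on $\mathrm{Rel}^{\infty}(X,Y)$ from the two preceding propositions. Your explicit check that the corollary's $\mathrm{def}$ and $\mathrm{Im}$ really are these composites is the right (and only) point of care.
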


\subsection{Smooth family of plots and the union power set diffeology}

{{} We here investigate the relationship between the power set diffeology and the diffeology on plots as it is defined in \cite{Igdiff}. For this, we first recall:}

\begin{Definition}
\cite[\S 1.63]{Igdiff} 
Let  $ (X, \mathcal{D}) $ be a diffeological space. The diffeological structure $ \mathcal{D} $ itself has a natural diffeology called the \textbf{standard
functional diffeology}. 
A parametrization $ \rho:U\rightarrow\mathcal{D} $ is a \textbf{plot} in $ \mathcal{D} $ or a \textbf{smooth family of plots in} $ X $ if and only if 
for all $ r_0\in U $, for all $ s_0\in\mathrm{dom}(\rho(r_0)) $, there exist an open neighborhood $  V\subset U $ of $ r_0 $
and an open neighborhood $  W $ of $ s_0 $ such that $ W \subset\mathrm{dom}(\rho(r)) $ for all $ r\in V $, and $ (r,s)\mapsto \rho(r)(s)$ defined on $ V\times W $ is a plot in $ X $.
\end{Definition}

\begin{Proposition}
The map  $ Im:\mathcal{D}\rightarrow \mathfrak{P}_{u}(X),~~ P\mapsto\mathrm{Im}(P)  $ is smooth. 
\end{Proposition}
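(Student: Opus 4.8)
The plan is to unwind what smoothness of $Im$ means and reduce it to the \textbf{UPD} criterion. Fix a plot $\rho : U \rightarrow \mathcal{D}$ for the standard functional diffeology, that is, a smooth family of plots, and set $Q = Im \circ \rho$, so that $Q(r) = \mathrm{Im}(\rho(r)) = \{\rho(r)(s) \mid s \in \mathrm{dom}(\rho(r))\}$. I must show that $Q$ is a plot in $\mathfrak{P}_{u}(X)$, which by \textbf{UPD} amounts to producing, for every $r_0 \in U$ and every $x_0 \in Q(r_0)$, a local implicit plot $\sigma$ passing through $x_0$ at $r_0$ and whose values lie in $Q(r)$ nearby.

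The construction is to freeze the source variable of $\rho$. Given $x_0 \in Q(r_0)$, write $x_0 = \rho(r_0)(s_0)$ for some $s_0 \in \mathrm{dom}(\rho(r_0))$. The definition of the standard functional diffeology, applied to the pair $(r_0, s_0)$, supplies an open neighborhood $V \subset U$ of $r_0$ and an open neighborhood $W$ of $s_0$ with $W \subset \mathrm{dom}(\rho(r))$ for all $r \in V$, such that the evaluation map $(r,s) \mapsto \rho(r)(s)$ is a plot in $X$ on $V \times W$. I then define $\sigma : V \rightarrow X$ by $\sigma(r) = \rho(r)(s_0)$, which is the composition of that evaluation plot with the smooth inclusion $r \mapsto (r, s_0)$; by the smooth compatibility condition, $\sigma$ is a plot in $X$.

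It remains to verify the two \textbf{UPD} requirements, both immediate: first $\sigma(r_0) = \rho(r_0)(s_0) = x_0$; second, since $s_0 \in W \subset \mathrm{dom}(\rho(r))$ for every $r \in V$, we have $\sigma(r) = \rho(r)(s_0) \in \mathrm{Im}(\rho(r)) = Q(r)$. Hence $\sigma$ is a local implicit plot, $Q$ satisfies \textbf{UPD}, and therefore $Im \circ \rho$ is a plot in $\mathfrak{P}_{u}(X)$, so $Im$ is smooth.

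The only point requiring care—and the reason the statement holds at all—is that the domains $\mathrm{dom}(\rho(r))$ vary with $r$, so a priori the chosen $s_0$ need not remain admissible for parameters near $r_0$. This is precisely what the clause $W \subset \mathrm{dom}(\rho(r))$ for all $r \in V$ in the definition of a smooth family of plots guarantees; without it the freezing construction would break down. I do not expect any genuine obstacle beyond invoking this clause, so the argument is essentially a direct translation between the definition of the standard functional diffeology and the \textbf{UPD} characterization of the union power set diffeology.
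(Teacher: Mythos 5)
Your proof is correct and follows exactly the same route as the paper's: write $x_0=\rho(r_0)(s_0)$, invoke the definition of a smooth family of plots to obtain $V\times W$ on which the evaluation is a plot with $W\subset\mathrm{dom}(\rho(r))$, and freeze the source variable to get the local implicit plot $\sigma(r)=\rho(r)(s_0)$. No differences worth noting.
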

%{\color{red} JPM: We must precise the "best" diffeology on $\mathfrak{P}^*(X).$}
%{\color{red}ARA: I have not checked this section yet. We deal with the union power set diffeology in this section.}
\begin{proof}
Let  $ \rho:U\rightarrow\mathcal{D} $ be a plot in $ \mathcal{D} $, and  $ r_0\in U $,  $ x_0\in \mathrm{Im}(\rho(r_0)) $.
Thus, there is an $ s_0\in\mathrm{dom}(\rho(r_0)) $ such that $ \rho(r_0)(s_0)=x_0 $.
By definition, there exist an open neighborhood $  V\subset U $ of $ r_0 $
and an open neighborhood $  W $ of $ s_0 $ such that $ W \subset\mathrm{dom}(\rho(r)) $ for all $ r\in V $, and $ (r,s)\mapsto \rho(r)(s)$ defined on $ V\times W $ is a plot in $ X $.
Define $ \sigma:V\rightarrow X $ by $ \sigma(r)=\rho(r)(s_0)\in \mathrm{Im}(\rho(r)) $, as the desired local implicit plot in $ X $.
\end{proof}

Let $ \p_n(X) $ denote the set of $ n $-plots in $ X $ with the subspace diffeology inherited from $ \mathcal{D} $.
Consider the set-valued map $ dom:\p_n(X)\rightarrow \mathfrak{P}(\mathbb{R}^n) $ taking any $ n $-plot to its domain.

\begin{Proposition}
The map  $ dom:\p_n(X)\rightarrow \mathfrak{P}_u(\mathbb{R}^n) $ is smooth. 
\end{Proposition}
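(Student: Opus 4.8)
The plan is to verify the defining property \textbf{UPD} of the union power set diffeology directly for the composite $dom \circ \rho$, where $\rho$ is an arbitrary plot of the standard functional diffeology on $\p_n(X)$. Recall that smoothness of $dom$ means precisely that for every such $\rho : U \rightarrow \p_n(X)$, the parametrization $P := dom \circ \rho : U \rightarrow \mathfrak{P}(\mathbb{R}^n)$, given by $P(r) = \mathrm{dom}(\rho(r))$, is a plot in $\mathfrak{P}_u(\mathbb{R}^n)$.

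So I would fix $r_0 \in U$ and an arbitrary initial condition $s_0 \in P(r_0) = \mathrm{dom}(\rho(r_0))$, and seek a local implicit plot through $s_0$. The key observation is that the definition of the standard functional diffeology supplies exactly the data needed: there exist an open neighborhood $V \subset U$ of $r_0$ and an open neighborhood $W$ of $s_0$ in $\mathbb{R}^n$ such that $W \subset \mathrm{dom}(\rho(r))$ for every $r \in V$. In particular, $s_0$ lies in $\mathrm{dom}(\rho(r)) = P(r)$ for all $r \in V$.

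This makes the construction immediate: take $\sigma : V \rightarrow \mathbb{R}^n$ to be the constant map $\sigma \equiv s_0$. It is smooth, hence a plot in $\mathbb{R}^n$, it satisfies $\sigma(r_0) = s_0$, and it obeys $\sigma(r) = s_0 \in W \subset P(r)$ for every $r \in V$. Thus \textbf{UPD} holds at $(r_0,s_0)$; since $r_0$ and $s_0$ were arbitrary, $P = dom \circ \rho$ is a plot in $\mathfrak{P}_u(\mathbb{R}^n)$, and as $\rho$ was an arbitrary plot in $\p_n(X)$, the map $dom$ is smooth.

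There is essentially no obstacle here: the only thing to notice is that the \emph{uniform} inner neighborhood $W$ provided by the standard functional diffeology is precisely what forces $s_0$ to remain inside the moving domain, so that the constant selection is legitimate. The same constant selection would also serve as a local selection plot for the weak power set diffeology, so the argument equally shows $dom$ is smooth into $\mathfrak{P}_w(\mathbb{R}^n)$; in view of Corollary \ref{cor:weak-union}, the content of the present statement is that $dom$ already lands smoothly in the finer union power set diffeology.
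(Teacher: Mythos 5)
Your proof is correct and is essentially identical to the paper's: both invoke the uniform inner neighborhood $W\subset\mathrm{dom}(\rho(r))$ supplied by the standard functional diffeology and take the constant map $\sigma\equiv s_0$ on $V$ as the local implicit plot. The closing remark about the weak diffeology is a harmless extra observation not present in the paper.
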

%{\color{red} JPM: We must precise the "best" diffeology on $\mathfrak{P}^*(\R^n).$ In fact, it is in a section dedicated to the power set diffeology, so it seems trivial, but by experience it will be trivial except for the referee!}
\begin{proof}
Let  $ \rho:U\rightarrow\p_n(X) $ be a smooth family of $ n $-plots in $ X $. Let $ r_0\in U $ and $ s_0\in\mathrm{dom}\circ\rho(r_0) $.
By definition, there exist an open neighborhood $  V\subset U $ of $ r_0 $
and an open neighborhood $  W $ of $ s_0 $ such that $ W \subset\mathrm{dom}(\rho(r)) $ for all $ r\in V $, and $ (r,s)\mapsto \rho(r)(s)$ defined on $ V\times W $ is a plot in $ X $.
Let $ \sigma:V\rightarrow W $ be the constant map with the value $ s_0 $, so that $ \sigma(r)\in W \subset\mathrm{dom}\circ\rho(r) $ for all $ r \in V $ and $ \sigma(r_0)=s_0 $.
\end{proof}
Consider
$ \Gamma_n:=\{(P,r)\mid P\in\p_n(X), r\in \mathrm{dom}(P) \} $ as a subspace of $ \p_n(X)\times\mathbb{R}^n  $. 
\begin{Proposition}
The map $ \mathrm{ev}:\Gamma_n\rightarrow X $ defined by $ \mathrm{ev}(P,r)=P(r) $  is smooth. 
\end{Proposition}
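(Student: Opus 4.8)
The plan is to unwind what a plot in the subspace $\Gamma_n \subset \p_n(X) \times \R^n$ actually is, and then to reduce the smoothness of $\mathrm{ev}$ to the local description of the standard functional diffeology combined with the smooth compatibility axiom. First I would take an arbitrary plot $Q : U \to \Gamma_n$. By the definitions of the product and subset diffeologies, $Q$ is of the form $Q = (\rho, c)$, where $\rho : U \to \p_n(X)$ is a smooth family of $n$-plots and $c : U \to \R^n$ is an ordinary smooth map, subject to the constraint $c(r) \in \mathrm{dom}(\rho(r))$ for all $r \in U$; this last condition is exactly what it means for the image of $Q$ to land in $\Gamma_n$. The goal is then to show that the parametrization $r \mapsto \mathrm{ev}(Q(r)) = \rho(r)\big(c(r)\big)$ is a plot in $X$.

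Next, fix $r_0 \in U$ and set $s_0 = c(r_0) \in \mathrm{dom}(\rho(r_0))$. Applying the defining property of the standard functional diffeology to the smooth family $\rho$ at the pair $(r_0, s_0)$, I obtain an open neighborhood $V \subset U$ of $r_0$ and an open neighborhood $W$ of $s_0$ such that $W \subset \mathrm{dom}(\rho(r))$ for every $r \in V$ and such that the map $E : V \times W \to X$ given by $E(r,s) = \rho(r)(s)$ is a plot in $X$. Since $c$ is smooth, hence continuous, and $c(r_0) = s_0 \in W$ with $W$ open, I can shrink $V$ to an open neighborhood $V' \subset V$ of $r_0$ with $c(V') \subset W$. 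On $V'$ the assignment $r \mapsto (r, c(r))$ is a smooth map into $V \times W$, so $\mathrm{ev} \circ Q|_{V'}(r) = E\big(r, c(r)\big)$ is the composition of the plot $E$ with a smooth map between domains, which is again a plot in $X$ by the smooth compatibility axiom of Definition \ref{d:diffeology}.

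Finally, since $r_0$ was arbitrary, I will have produced around every point of $U$ an open neighborhood on which $\mathrm{ev} \circ Q$ is a plot; by the locality axiom this shows that $\mathrm{ev} \circ Q$ is a plot in $X$ on all of $U$, and therefore $\mathrm{ev}$ is smooth. The only genuinely delicate point—the rest being bookkeeping against the axioms—is the passage from the abstract smoothness of the family $\rho$ to a concrete plot $E$ defined on a product neighborhood $V \times W$, which is precisely the content of the local description of the standard functional diffeology. Here the role of the continuity of $c$ is to guarantee that the fixed-domain window $W$ furnished by that description actually contains the moving evaluation points $c(r)$ for $r$ near $r_0$. I expect this synchronization of the neighborhoods $V'$ and $W$ to be the main thing to get right, while everything else follows mechanically from the diffeological axioms.
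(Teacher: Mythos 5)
Your argument is correct and follows essentially the same route as the paper's proof: decompose the plot as a pair $(\rho,c)$, invoke the local description of the standard functional diffeology to obtain the product-domain plot $(r,s)\mapsto\rho(r)(s)$ on $V\times W$, and shrink to $V'=V\cap c^{-1}(W)$ before composing with $r\mapsto(r,c(r))$. The only difference is that you spell out the appeal to the smooth compatibility and locality axioms at the end, which the paper leaves implicit.
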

\begin{proof}
Assume that $ (\rho,F):U\rightarrow \Gamma_n $ is a plot. That is, $ \rho:U\rightarrow\p_n(X) $ is a smooth family of $ n $-plots in $ X $ and $ F:U\rightarrow \mathbb{R}^n  $ is a smooth map between domains with $ F(r)\in\mathrm{dom}(\rho(r)) $ for all $ r\in U $.
Take any $ r_0\in U $, so that $ F(r_0)\in\mathrm{dom}(\rho(r_0)) $.
By definition, there exist an open neighborhood $  V\subset U $ of $ r_0 $
and an open neighborhood $  W $ of $ F(r_0) $ such that $ W \subset\mathrm{dom}(\rho(r)) $ for all $ r\in V $, and $ (r,s)\mapsto \rho(r)(s)$ defined on $ V\times W $ is a plot in $ X $. Set $ V':=V\cap F^{-1}(W) $, which is an open neighborhood of $ r_0 $.
Thus, the parametrization $ ev\circ (\rho,F)|_{V'} $ taking $ r\in V' $ to $ \rho(r)(F(r)) $ is a plot in $ X $. Hence $ ev\circ (\rho,F) $ itself is a plot in $ X $.
\end{proof}
\begin{Proposition}
Consider $ \Delta=\{(P,F)\mid P\in\p_n(X), F\in\p_m(\mathbb{R}^n), \mathrm{Im}(F)\subset \mathrm{dom}(P) \} $  as a subspace of $ \mathcal{D}_n(X)\times\p_m(\mathbb{R}^n) $.
The map $ \circ:\Delta\rightarrow \p_m(X) $ taking any combinable pair $ (P,F) $ to $ P\circ F $ is smooth.
\end{Proposition}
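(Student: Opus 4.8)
The plan is to verify directly that $\circ$ carries every plot of $\Delta$ to a plot of $\p_m(X)$, by applying the defining condition of the standard functional diffeology twice and then invoking the smooth compatibility axiom of the diffeology on $X$. A plot of $\Delta$ is a pair $(\rho,\Phi):W\rightarrow\Delta$ in which $\rho:W\rightarrow\p_n(X)$ is a smooth family of $n$-plots and $\Phi:W\rightarrow\p_m(\R^n)$ is a smooth family of $m$-plots, subject to the standing constraint $\mathrm{Im}(\Phi(w))\subset\mathrm{dom}(\rho(w))$ for every $w\in W$. Its image under $\circ$ is the parametrization $w\mapsto\rho(w)\circ\Phi(w)$, and because of that constraint we have $\mathrm{dom}(\rho(w)\circ\Phi(w))=\mathrm{dom}(\Phi(w))$. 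So the task is to check the smooth-family-of-plots condition for this parametrization at an arbitrary $w_0\in W$ and $s_0\in\mathrm{dom}(\Phi(w_0))$.

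First I would apply the smooth-family condition to $\Phi$ at $(w_0,s_0)$: this yields open neighborhoods $V_1\ni w_0$ and $S\ni s_0$ with $S\subset\mathrm{dom}(\Phi(w))$ for all $w\in V_1$, and such that $g:(w,s)\mapsto\Phi(w)(s)$, defined on $V_1\times S$, is a plot in $\R^n$, i.e. a smooth map into $\R^n$. Next, set $t_0=\Phi(w_0)(s_0)=g(w_0,s_0)$; by the standing constraint $t_0\in\mathrm{dom}(\rho(w_0))$, so applying the smooth-family condition to $\rho$ at $(w_0,t_0)$ gives open neighborhoods $V_2\ni w_0$ and $T\ni t_0$ with $T\subset\mathrm{dom}(\rho(w))$ for all $w\in V_2$, and such that $h:(w,t)\mapsto\rho(w)(t)$, defined on $V_2\times T$, is a plot in $X$.

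The final step is to glue these two pieces. Since $g$ is continuous and $g(w_0,s_0)=t_0\in T$, the set $g^{-1}(T)$ is an open neighborhood of $(w_0,s_0)$ in $V_1\times S$, so I may choose a product neighborhood $V_3\times S'\subset g^{-1}(T)$. Putting $V=V_2\cap V_3$, for every $w\in V$ and $s\in S'$ one has $s\in\mathrm{dom}(\Phi(w))$ and $\Phi(w)(s)=g(w,s)\in T\subset\mathrm{dom}(\rho(w))$, whence $S'\subset\mathrm{dom}(\rho(w)\circ\Phi(w))$, and moreover the reparametrization $\psi:(w,s)\mapsto(w,g(w,s))$ is a smooth map $V\times S'\rightarrow V_2\times T$. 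The smooth compatibility axiom (Definition \ref{d:diffeology}, condition 3) then forces $h\circ\psi$ to be a plot in $X$; since $h\circ\psi(w,s)=\rho(w)\big(\Phi(w)(s)\big)=(\rho(w)\circ\Phi(w))(s)$, this is exactly the local plot on $V\times S'$ required to exhibit $w\mapsto\rho(w)\circ\Phi(w)$ as a smooth family of $m$-plots.

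I expect the only genuine subtlety to be the bookkeeping of the nested shrinking of neighborhoods: the functional diffeology condition for $\Phi$ must be invoked first, because only after obtaining $t_0=g(w_0,s_0)$ does one know where to center the condition for $\rho$, and then a further shrinking is needed so that the image of $g$ lands inside the window $T$ on which $h$ is controlled. The conceptual heart, once the neighborhoods are arranged, is simply that the composition factors as $h\circ\psi$ through the smooth reparametrization $\psi(w,s)=(w,\Phi(w)(s))$, reducing the claim to the compatibility axiom of the diffeology on $X$.
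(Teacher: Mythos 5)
Your argument is correct, and its overall skeleton coincides with the paper's: invoke the standard functional diffeology condition first for the family of $m$-plots $\Phi$ (the paper's $\nu$) at $(w_0,s_0)$, then for the family of $n$-plots $\rho$ at the image point $t_0=\Phi(w_0)(s_0)$, and finally shrink neighborhoods so that the values of $(w,s)\mapsto\Phi(w)(s)$ land inside the window on which $(w,t)\mapsto\rho(w)(t)$ is controlled. The one genuine difference is the finishing move. The paper closes by citing the smoothness of the composition operator $\circ\colon C^{\infty}(W',X)\times C^{\infty}(W,W')\rightarrow C^{\infty}(W,X)$ from Iglesias-Zemmour's book, which packages the local data as a pair of plots in functional spaces; you instead factor the local composition as $h\circ\psi$ through the smooth reparametrization $\psi(w,s)=(w,g(w,s))$ between Euclidean domains and appeal only to the smooth compatibility axiom of the diffeology on $X$. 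Your route is more elementary and self-contained, and your bookkeeping of the nested shrinkings (in particular taking a product neighborhood inside $g^{-1}(T)$) is in fact tidier than the paper's, which is slightly loose about which restricted domain the final composed plot lives on; what the paper's route buys is brevity and a direct link to a standard fact about functional diffeologies that is reused elsewhere. Both arguments are complete.
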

\begin{proof}
Assume that $ (\rho,\nu):U\rightarrow\Delta $ be a plot. This means that $ \mathrm{Im}(\nu(r))\subset \mathrm{dom}(\rho(r)) $ for all $ r\in U $. To see the parametrization $ r\mapsto \rho(r)\circ\nu(r) $ on $ U $ is a smooth family of $ m $-plots in $ X $, let $ r_0\in U $ and  $ s_0\in\mathrm{dom}(\rho(r_0)\circ\nu(r_0))=\mathrm{dom}(\nu(r_0)) $.
By definition, there exist an open neighborhood $  V\subset U $ of $ r_0 $
and an open neighborhood $  W $ of $ s_0 $ such that $ W \subset\mathrm{dom}(\nu(r)) $ for all $ r\in V $, and $ (r,s)\mapsto \nu(r)(s)$ defined on $ V\times W $ is a plot in $ \mathbb{R}^n $.
On the other hand, for $ \nu(r_0)(s_0)\in\mathrm{dom}(\rho(r_0)) $, there exist an open neighborhood $  V'\subset V $ of $ r_0 $
and an open neighborhood $  W' $ of $ \nu(r_0)(s_0) $ such that $ W' \subset\mathrm{dom}(\rho(r)) $ for all $ r\in V' $, and $ (r,s)\mapsto \rho(r)(s)$ defined on $ V'\times W' $ is a plot in $ X $.
But there exists some open subsets $ W''\subset W $ and $ V''\subset V $ such that  $ (r,s)\mapsto \nu(r)(s)$ defined on $ V''\times W'' $ is a plot in $ W $.
Since the composition map
$ \circ: C^{\infty}(W',X)\times C^{\infty}(W,W') \rightarrow C^{\infty}(W,X)$
is smooth by \cite[\S 1.59]{Igdiff}, we conclude that  
$ (r,s)\mapsto (\rho(r)\circ\nu(r))(s)$ defined on $ V'\times W $ is a plot in $ X $.
\end{proof}

%	Let  $\lbrace P_i: U_i\rightarrow X\rbrace_{i\in J}$  be a (not necessarily compatible) family of $ n $-parametrizations  in  a set $ X $ and let $ U=\bigcup_{i\in J} U_i $.
%The \textit{union} parametrization $ P:U\rightarrow \mathfrak{P}(X) $ is defined by  $ P(r)=\{ P_i(r)\mid i\in J, r\in \mathrm{dom}(P_i) \} $, for each $ r\in U $.

\begin{Proposition}
The map $ Un:\mathfrak{P}_{u}(\p_n(X))\rightarrow \p_n(\mathfrak{P}_{u}(X)) $ taking any family of $ n $-plots to its union is a surjective smooth map.
\end{Proposition}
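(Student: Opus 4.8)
The plan is to verify three points in turn: that $Un$ lands in $\p_n(\mathfrak{P}_{u}(X))$, that it is smooth, and that it is surjective. For the first point, let $\A=\{P_i:U_i\rightarrow X\}_{i\in J}$ be a family of $n$-plots and set $Q=Un(\A)$. Its domain $\bigcup_{i}U_i$ is open in $\R^n$, and \textbf{UPD} holds trivially: for $r_0$ in the domain and $x_0\in Q(r_0)$ we have $x_0=P_j(r_0)$ for some $j$ with $r_0\in U_j$, and the plot $P_j$ is itself a local implicit plot through $(r_0,x_0)$. Hence $Q\in\p_n(\mathfrak{P}_{u}(X))$ and $Un$ is well defined.

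For smoothness, let $\rho:U\rightarrow\mathfrak{P}_{u}(\p_n(X))$ be a plot; I must show $Un\circ\rho$ is a smooth family of $n$-plots in $\mathfrak{P}_{u}(X)$. Fix $r_0\in U$ and $s_0\in\mathrm{dom}\big((Un\circ\rho)(r_0)\big)=\bigcup_{P\in\rho(r_0)}\mathrm{dom}(P)$, and choose $P_0\in\rho(r_0)$ with $s_0\in\mathrm{dom}(P_0)$. Applying \textbf{UPD} to the plot $\rho$ at the point $P_0\in\rho(r_0)$ yields a neighborhood $V$ of $r_0$ and a plot $\Sigma:V\rightarrow\p_n(X)$ (that is, a smooth family of $n$-plots) with $\Sigma(r)\in\rho(r)$ for all $r\in V$ and $\Sigma(r_0)=P_0$. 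Since $\Sigma$ is a smooth family and $s_0\in\mathrm{dom}(\Sigma(r_0))$, the definition of the standard functional diffeology provides, after shrinking $V$, a neighborhood $W$ of $s_0$ with $W\subseteq\mathrm{dom}(\Sigma(r))\subseteq\mathrm{dom}\big((Un\circ\rho)(r)\big)$ for every $r\in V$; this secures the required control on domains.

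It then remains to prove that the parametrization $(r,s)\mapsto(Un\circ\rho)(r)(s)=\{P(s)\mid P\in\rho(r),\ s\in\mathrm{dom}(P)\}$ is a plot of $\mathfrak{P}_{u}(X)$ on $V\times W$, which I would again do through \textbf{UPD}. Given $(r_1,s_1)\in V\times W$ and $x_1\in(Un\circ\rho)(r_1)(s_1)$, write $x_1=P_1(s_1)$ with $P_1\in\rho(r_1)$; a fresh application of \textbf{UPD} to $\rho$ at $P_1$ gives a smooth family $\Sigma_1$ with $\Sigma_1(r)\in\rho(r)$ and $\Sigma_1(r_1)=P_1$, and the smooth-family property of $\Sigma_1$ then furnishes, on a neighborhood of $(r_1,s_1)$, the plot $(r,s)\mapsto\Sigma_1(r)(s)$ in $X$. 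This is a local implicit plot for the parametrization above, since $\Sigma_1(r)(s)\in(Un\circ\rho)(r)(s)$ and $\Sigma_1(r_1)(s_1)=x_1$. The main obstacle lies precisely here: intertwining two uses of \textbf{UPD} with the unfolding of the standard functional diffeology, while keeping the several shrinking neighborhoods consistent.

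Finally, for surjectivity, given $Q\in\p_n(\mathfrak{P}_{u}(X))$ let $\A_Q\subseteq\p_n(X)$ be the family of all local implicit plots of $Q$; each is a genuine $n$-plot in $X$ by \textbf{UPD}. Exactly as in the proof that \textbf{UPD} characterizes $\mathfrak{P}_{u}(X)$, the plot $Q$ is the union of its local implicit plots, so $Un(\A_Q)=Q$ at every point where $Q$ is nonempty. The one delicate point is the locus where $Q$ takes the value $\emptyset$: a union of single-valued $n$-plots never attains $\emptyset$ on its domain, whereas a plot of $\mathfrak{P}_{u}(X)$ may. To realize such a $Q$ exactly one must admit the $\emptyset$-valued parametrizations allowed in the union power set diffeology (which are not elements of $\p_n(X)$), or else read surjectivity relative to $\mathfrak{P}^{\star}_{u}$. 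I regard this empty-value bookkeeping as the subtlest aspect of the surjectivity claim.
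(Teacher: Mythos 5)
Your smoothness argument coincides with the paper's own proof: pick $P_0\in\Pi(r_0)$ with $s_0\in\mathrm{dom}(P_0)$, apply \textbf{UPD} to the plot in $\mathfrak{P}_{u}(\p_n(X))$ to get a smooth family $\sigma$ through $P_0$, invoke the standard functional diffeology to secure $W\subset\mathrm{dom}(\sigma(r))\subset\mathrm{dom}(Un\circ\Pi(r))$, and then repeat the same two steps at an arbitrary $(v_0,w_0)$ to exhibit $(r,s)\mapsto\sigma(r)(s)$ as a local implicit plot; your well-definedness check is the observation the paper leaves implicit. Your reservation about surjectivity is justified: the paper dispatches it with a one-line appeal to Definition \ref{d:upd}, but, as you note, the union of a family of $X$-valued $n$-plots is nonempty at every point of its domain, so an $n$-plot of $\mathfrak{P}_{u}(X)$ that attains the value $\emptyset$ (e.g.\ a constant $\emptyset$-valued $n$-parametrization) is not in the image, and the claim is accurate only on $\mathfrak{P}^{\star}_{u}(X)$ or after admitting the $\emptyset$-valued parametrizations into the families.
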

%{\color{red}JPM: In view of the developments on Boolean diffeologies, we really need to precise the diffeology on $\mathfrak{P}^\star(\p_n(X))$ and on $\p_n(\mathfrak{P}^\star(X)) .$}

\begin{proof}
By definition \ref{d:upd}, $ Un $ is surjective.
Suppose that $ \Pi:U\rightarrow \mathfrak{P}_{u}(\p_n(X)) $ is a plot in $ \mathfrak{P}_{u}(\p_n(X))  $.
To prove that $ Un\circ\Pi $ is a plot in  $ \p_n(\mathfrak{P}_{u}(X)) $,
let $ r_0\in U $ and  $ s_0\in\mathrm{dom}(Un\circ\Pi(r_0)) $. 
Notice that $ \Pi(r_0) $ is a collection of $ n $-plots in $ X $, so one can find an $ n $-plot $ P_0\in \Pi(r_0) $ such that $ s_0\in \mathrm{dom}(P_0) $.
Since $ \Pi $ is a plot in $  \mathfrak{P}_{u}(\p_n(X))  $, by definition, there exist an open neighborhood $ U' \subset U $ of $ r_0 $ and a  smooth family  $ \sigma : U' \rightarrow \p_n(X) $ of plots in $ X $ such that $ \sigma(r) \in \Pi(r) $, 
for every $ r \in U' $ and $ \sigma(r_0)=P_0 $.
For $ s_0\in\mathrm{dom}(\sigma(r_0)) $, there exist an open neighborhood $  V\subset U' $ of $ r_0 $
and an open neighborhood $  W $ of $ s_0 $ such that $ W \subset\mathrm{dom}(\sigma(r)) $ for all $ r\in V $, and $ (r,s)\mapsto \sigma(r)(s)$ defined on $ V\times W $ is a plot in $ X $.
Thus $ W \subset\mathrm{dom}(\sigma(r))\subset\mathrm{dom}(Un\circ\Pi(r))  $ for all $ r\in V $.

Now we show that $ (r,s)\mapsto Un\circ\Pi(r)(s)$ defined on $ V\times W $ is a plot in $ \mathfrak{P}_{u}(X) $. Let
$ (v_0,w_0)\in V\times W  $ and $ x_0\in Un\circ\Pi(v_0)(w_0)$. This means that $ w_0\in \mathrm{dom}(Un\circ\Pi(v_0))  $ and that
for some $ n $-plot $ P_0\in \Pi(v_0) $ we have $ P_0(w_0) =x_0 $.
Similar to the above argument, one can find a local implicit plot  $ (r,s)\mapsto \sigma(r)(s)$ in $ X $ with $ \sigma(r)(s)\in Un\circ\Pi(r)(s)$ and $ \sigma(v_0)(w_0)=P_0(w_0)=x_0 $.
Therefore, $ Un\circ\Pi $ is a plot in $  \mathcal{D}_n(\mathfrak{P}_{u}(X))  $.
\end{proof}

\begin{Cor}
The supremum map $ Sup:\mathsf{Comp}(\mathfrak{P}_{u}(\mathcal{D}_n(X)))\rightarrow \mathcal{D}_n(X) $ taking any compatible family of $ n $-plots to its supremum is smooth.
\end{Cor}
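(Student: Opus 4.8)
The plan is to reduce the statement to the previous Proposition (smoothness of $Un$) by exploiting that $\imath : X \to \mathfrak{P}_{u}(X)$ is an induction, which we have from Proposition \ref{pro:union-se}. The starting point is a simple algebraic identity. If $\mathcal{F}=\{P_i:U_i\rightarrow X\}$ is a \emph{compatible} family of $n$-plots, then for every $r\in\bigcup_i U_i$ the value $Un(\mathcal{F})(r)=\{P_i(r)\mid r\in U_i\}$ is a singleton, since compatibility forces all the $P_i(r)$ to coincide, and that common value is exactly $Sup(\mathcal{F})(r)$. Hence on the subspace $\mathsf{Comp}(\mathfrak{P}_{u}(\p_n(X)))$ one has the factorization
$$ Un(\mathcal{F}) = \imath \circ \big(Sup(\mathcal{F})\big), $$
an equality of $n$-plots in $\mathfrak{P}_{u}(X)$, where the right-hand side is the composite of the $n$-plot $Sup(\mathcal{F})$ in $X$ with $\imath:X\rightarrow\mathfrak{P}_{u}(X)$. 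In particular $Un|_{\mathsf{Comp}}$ takes values among the $n$-plots of $\mathfrak{P}_{u}(X)$ whose image lies in the singletons $\imath(X)$.

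Next I would observe that, since $\mathsf{Comp}(\mathfrak{P}_{u}(\p_n(X)))$ carries the subspace diffeology inherited from $\mathfrak{P}_{u}(\p_n(X))$ and $Un:\mathfrak{P}_{u}(\p_n(X))\rightarrow\p_n(\mathfrak{P}_{u}(X))$ is smooth by the preceding Proposition, the restriction $Un|_{\mathsf{Comp}}$ is smooth as a composite of the (smooth) subspace inclusion with $Un$.

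The core of the argument is to descend this smoothness through $\imath$. Let $\Pi:U\rightarrow\mathsf{Comp}(\mathfrak{P}_{u}(\p_n(X)))$ be a plot; I must show that $Sup\circ\Pi$ is a smooth family of $n$-plots in $X$. Fix $r_0\in U$ and $s_0\in\mathrm{dom}(Sup\circ\Pi(r_0))$. Since $Un\circ\Pi=\imath\circ(Sup\circ\Pi)$ is a smooth family of plots in $\mathfrak{P}_{u}(X)$, there are open neighborhoods $V\subset U$ of $r_0$ and $W$ of $s_0$ with $W\subset\mathrm{dom}(Un\circ\Pi(r))$ for all $r\in V$ such that $(r,s)\mapsto Un\circ\Pi(r)(s)=\{Sup\circ\Pi(r)(s)\}$ is a plot in $\mathfrak{P}_{u}(X)$ on $V\times W$. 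This last parametrization is precisely $\imath$ composed with $(r,s)\mapsto Sup\circ\Pi(r)(s)$; because $\imath$ is an induction, the pulled-back parametrization $(r,s)\mapsto Sup\circ\Pi(r)(s)$ is then a plot in $X$ on $V\times W$. As $r_0$ and $s_0$ were arbitrary, $Sup\circ\Pi$ satisfies the defining condition of the standard functional diffeology, hence is a plot in $\p_n(X)$, and $Sup$ is smooth.

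The step requiring the most care is this final descent: one must check that the standard functional diffeology condition is verified \emph{locally} on the products $V\times W$, and that the induction property of $\imath$ — a statement about parametrizations into $\mathfrak{P}_{u}(X)$ — applies verbatim to these local plots $(r,s)\mapsto\{Sup\circ\Pi(r)(s)\}$. The remaining ingredients, namely the singleton identity for compatible families and the smoothness of a restriction to a subspace, are routine.
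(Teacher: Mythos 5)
Your argument is correct, and it is precisely the derivation the paper intends (the Corollary is stated without proof, immediately after the Proposition on $Un$ and the earlier remark that for a compatible family the union parametrization coincides with the supremum): you factor $Un(\mathcal{F})=\imath\circ Sup(\mathcal{F})$ on $\mathsf{Comp}(\mathfrak{P}_{u}(\mathcal{D}_n(X)))$, invoke the smoothness of $Un$, and descend through $\imath$ using that it is an induction (Proposition \ref{pro:union-se}). The local verification of the standard functional diffeology condition on $V\times W$ and the matching of domains $\mathrm{dom}(Un\circ\Pi(r))=\mathrm{dom}(Sup\circ\Pi(r))$ are handled correctly, so nothing is missing.
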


\section{The strong power set diffeology}\label{spsdiff}
Hereafter, we have another diffeology on the power set, which  we call  the strong power set diffeology.

\begin{Definition} \cite[p. 61]{Igdiff}  
Let $ X $ be a diffeological space. The set of all parametrizations
$ P:U\rightarrow\mathfrak{P}(X) $
with the following property is a diffeology on $  \mathfrak{P}(X) $, called the \textbf{strong power set diffeology}: 
\begin{enumerate}
\item[\textbf{SPD.}] 
for every $ r_0 \in U $ and every plot $ Q_0  $ in $ X $ with $ \mathrm{Im}(Q_0)\subset P(r_0) $, there exist an open neighborhood $ V \subset U $ of $ r_0 $ and a \textit{local smooth family} $ \sigma : V \rightarrow \mathcal{D} $ of plots  in $ X $  such that $ \mathrm{Im}(\sigma(r)) \subset P(r) $, 
for every $ r \in V $ and $ \sigma(r_0)=Q_0 $.
\end{enumerate} 	
We denote by $  \mathfrak{P}_{s}(X) $  the power set	$  \mathfrak{P}(X) $ endowed with the strong power set diffeology and call it the \textbf{strong power space}.
\end{Definition}
The strong power set diffeology is indeed the coarsest diffeology on  $  \mathfrak{P}(X) $ such that the map
$ \delta_X:\mathfrak{P}_{s}(X)\rightarrow \mathfrak{P}_{u}(\mathcal{D}) $ taking any subset $ A\subset X $ to the subspace diffeology on $ A $, i.e.
$ \delta_X(A)=\{P\in\mathcal{D}\mid \mathrm{Im}(P)\subset A \} $,
is smooth (see \cite[Exercise 62(2)]{Igdiff}).
One can see that $ \delta_X $ is an induction.

\begin{Proposition}\label{pro:strong-union}
The identity map 
$ \mathrm{id}:\mathfrak{P}_{s}(X)\rightarrow \mathfrak{P}_{u}(X) $
is smooth. That is, the strong power set diffeology is finer than the union power set diffeology.
\end{Proposition}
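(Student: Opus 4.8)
The plan is to unwind both diffeologies at the level of their defining plot conditions: the identity map $\mathrm{id}:\mathfrak{P}_{s}(X)\rightarrow \mathfrak{P}_{u}(X)$ is smooth if and only if every parametrization $P:U\rightarrow\mathfrak{P}(X)$ satisfying \textbf{SPD} also satisfies \textbf{UPD}. So I fix a plot $P$ in $\mathfrak{P}_{s}(X)$, a point $r_0\in U$, and an initial condition $x_0\in P(r_0)$, and I aim to produce a local implicit plot $\tilde\sigma$ through $(r_0,x_0)$ with values in $P$.

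The key idea is to feed \textbf{SPD} the simplest admissible plot whose image lies in $P(r_0)$. I take $Q_0$ to be the constant $0$-plot with value $x_0$, so that $\mathrm{Im}(Q_0)=\{x_0\}\subset P(r_0)$. Applying \textbf{SPD} yields an open neighborhood $V\subset U$ of $r_0$ and a local smooth family $\sigma:V\rightarrow\mathcal{D}$ of plots in $X$ with $\mathrm{Im}(\sigma(r))\subset P(r)$ for all $r\in V$ and $\sigma(r_0)=Q_0$. It then remains to convert this smooth family of plots into a single honest plot of $X$, which is accomplished by point evaluation. Let $s_0\in\mathrm{dom}(Q_0)$; since $s_0\in\mathrm{dom}(\sigma(r_0))$, the definition of the standard functional diffeology on $\mathcal{D}$ furnishes open neighborhoods $V'\subset V$ of $r_0$ and $W$ of $s_0$ with $W\subset\mathrm{dom}(\sigma(r))$ for all $r\in V'$, such that $(r,s)\mapsto\sigma(r)(s)$ is a plot in $X$ on $V'\times W$. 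Defining $\tilde\sigma:V'\rightarrow X$ by $\tilde\sigma(r)=\sigma(r)(s_0)$ gives a plot in $X$ (it is the precomposition of that plot with the smooth map $r\mapsto(r,s_0)$), and by construction $\tilde\sigma(r)\in\mathrm{Im}(\sigma(r))\subset P(r)$ while $\tilde\sigma(r_0)=\sigma(r_0)(s_0)=Q_0(s_0)=x_0$. Thus $\tilde\sigma$ is exactly a local implicit plot witnessing \textbf{UPD}, so $P$ is a plot in $\mathfrak{P}_{u}(X)$.

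The one genuinely delicate step is the passage from the smooth family $\sigma$ to the single-parameter plot $\tilde\sigma$: one must ensure that the evaluation point $s_0$ remains in $\mathrm{dom}(\sigma(r))$ for all $r$ near $r_0$, and this is precisely what the standard functional diffeology guarantees after shrinking $V$ to $V'$. Everything else is a formal verification, and the choice of $Q_0$ as a constant plot is what makes the point evaluation trivially land on $x_0$.
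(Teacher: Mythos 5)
Your proposal is correct and follows essentially the same route as the paper's proof: feed \textbf{SPD} the constant $0$-plot at $x_0$, then evaluate the resulting local smooth family of plots at a fixed point of its domain (using the standard functional diffeology on $\mathcal{D}$ to guarantee the evaluation is well-defined and smooth near $r_0$) to produce the local implicit plot required by \textbf{UPD}. Your explicit attention to keeping $s_0$ in $\mathrm{dom}(\sigma(r))$ after shrinking to $V'$ is exactly the step the paper handles by noting that $\sigma(r)$ remains a $0$-plot on a neighborhood of $r_0$.
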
 
\begin{proof}
Let 	$ P:U\rightarrow\mathfrak{P}_{s}(X) $ be a plot and let  $ r_0 \in U $, $ x\in P(r_0) $.
Consider the $ 0 $-plot $ \mathbf{x} $ for which we get $ \mathrm{Im}(\mathbf{x})\subset P(r_0) $.
By \textbf{SPD}, we obtain  an open neighborhood $ V \subset U $ of $ r_0 $ and a  local smooth family $ \sigma : V \rightarrow \mathcal{D} $ of plots  in $ X $  such that $ \mathrm{Im}(\sigma(r)) \subset P(r) $, 
for every $ r \in V $ and $ \sigma(r_0)=\mathbf{x} $.
By definition, there exist an open neighborhood $  V'\subset V $ of $ r_0 $
such that $  \sigma(r) $ is a $ 0 $-plot for all $ r\in V' $, and $ r\mapsto \sigma(r)(0) $ defined on $ V' $ is a plot in $ X $.
Now  define $ \tau: V'\rightarrow X $ by $ \tau(r)=\sigma(r)(0) $ so that $ \tau(r)\in P(r) $ and $ \tau(r_0)=\sigma(r_0)(0)=x $.
Therefore, $ P $ satisfies \textbf{UPD} and it is a plot in $ \mathfrak{P}_{u}(X) $.
\end{proof}
\begin{rem}\label{rem:strong-lsc}
By Theorem \ref{The-lsc}, and the fact that the identity map 
$ \mathrm{id}:\mathfrak{P}_{s}(X)\rightarrow \mathfrak{P}_{u}(X) $
is smooth, 
any  smooth set-valued map $ \phi:Y\rightarrow\mathfrak{P}_{s}(X) $ is lower semi-continuous with respect to the D-topology.
\end{rem}
In \cite{DP}, it is shown that the map $ \imath:X\rightarrow\mathfrak{P}_{s}(X) $ is an embedding. Furthermore, we can say it is actually a strong embedding.
\begin{Proposition}
The map $ \imath:X\rightarrow\mathfrak{P}_{s}(X) $ defined by $ \imath(x)=\{x\} $ is a strong embedding.
\end{Proposition}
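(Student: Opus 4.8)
The plan is to exploit the already-established union case together with the comparison of the two diffeologies, so that every part of the statement reduces to a short formal argument plus one direct verification. Recall that a strong embedding is by definition an induction, a diffeological immersion, and a D-embedding, and that we already know $\imath\colon X\to\mathfrak{P}_u(X)$ is a strong embedding (Proposition \ref{pro:union-se}) and that $\mathrm{id}\colon\mathfrak{P}_s(X)\to\mathfrak{P}_u(X)$ is smooth (Proposition \ref{pro:strong-union}). Write $\imath_s\colon X\to\mathfrak{P}_s(X)$, $\imath_u\colon X\to\mathfrak{P}_u(X)$ and $j=\mathrm{id}\colon\mathfrak{P}_s(X)\to\mathfrak{P}_u(X)$, so that $\imath_u=j\circ\imath_s$.

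First I would verify directly that $\imath_s$ is smooth, which is the only genuinely new computation. Given a plot $P\colon U\to X$, for $r_0\in U$ and any plot $Q_0\colon W_0\to X$ with $\mathrm{Im}(Q_0)\subset\imath_s\circ P(r_0)=\{P(r_0)\}$ (so $Q_0$ is constant equal to $P(r_0)$), one defines $\sigma(r)\colon W_0\to X$ to be the constant plot with value $P(r)$; then $(r,s)\mapsto\sigma(r)(s)=P(r)$ is a plot in $X$ (it is $P\circ\mathrm{pr}_1$), $\mathrm{Im}(\sigma(r))\subset\{P(r)\}$ and $\sigma(r_0)=Q_0$, so \textbf{SPD} holds. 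Injectivity being obvious, the induction property follows at once: the forward inclusion is the smoothness just proved, while if $\imath_s\circ P$ is a plot in $\mathfrak{P}_s(X)$ then $\imath_u\circ P=j\circ\imath_s\circ P$ is a plot in $\mathfrak{P}_u(X)$, whence $P$ is a plot in $X$ because $\imath_u$ is an induction.

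The diffeological immersion property is then immediate from Proposition \ref{p:immr}: since $\imath_u\colon X\to\mathfrak{P}_u(X)$ is a diffeological immersion and $j\colon\mathfrak{P}_s(X)\to\mathfrak{P}_u(X)$ is smooth, the ``in particular'' clause gives that $\imath_s\colon X\to\mathfrak{P}_s(X)$ is a diffeological immersion. For the D-embedding it suffices to check that $\imath_s$ is a homeomorphism onto its image; it is continuous (being smooth) and injective, so only continuity of the inverse remains. Here one uses the factorization: on $\imath_s(X)$ one has $\imath_s^{-1}=\imath_u^{-1}\circ j|_{\imath_s(X)}$, and the right-hand side is a composite of D-continuous maps ($j$ is smooth hence D-continuous, and $\imath_u^{-1}$ is continuous because $\imath_u$ is a D-embedding by Proposition \ref{pro:union-se}). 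Hence $\imath_s^{-1}$ is continuous and $\imath_s$ is a D-embedding. Alternatively, one can run the argument of Proposition \ref{pro:union-se} verbatim, using Remark \ref{rem:strong-lsc} in place of Theorem \ref{The-lsc} to see that $O'=\{A\mid A\cap O\neq\emptyset\}$ is D-open in $\mathfrak{P}_s(X)$. I expect the only real point requiring care to be bookkeeping the direction of ``finer'' in the comparison $\mathfrak{P}_s\hookrightarrow\mathfrak{P}_u$, so that Proposition \ref{p:immr} and the topological factorization are applied with the correct variance; the substantive content is entirely carried by the union case.
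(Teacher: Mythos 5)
Your proof is correct and follows essentially the same route as the paper, which likewise reduces everything to the union case via Proposition \ref{pro:strong-union} together with Remark \ref{rem:strong-lsc} and the argument of Proposition \ref{pro:union-se}. The only additions on your side are welcome ones: you spell out the \textbf{SPD} verification of smoothness explicitly (using the constant family $\sigma(r)$ with value $P(r)$), and for the D-embedding you offer a purely formal factorization $\imath_s^{-1}=\imath_u^{-1}\circ j|_{\imath_s(X)}$ in place of re-running the lower semi-continuity argument, while correctly noting that the latter (the paper's route) works verbatim.
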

\begin{proof}
In view of Proposition \ref{pro:strong-union} and Remark \ref{rem:strong-lsc}, the proof is quite similar to that of Proposition \ref{pro:union-se}.
\end{proof}

\section{Projectable diffeologies on power sets}\label{ppsdiff}

\subsection{The locally projectable parametrizations on $\mathfrak{P}^\star(X)$}
By contrast with global projectable parametrizations, we define local projactable parametrizations at a fixed $A \in \mathfrak{P}^\star(X).$
\begin{Definition}
Let $A \in \mathfrak{P}^\star(X)$ and let $U$ be a non-empty open subset of an Euclidean space. A {\bf local projectable parametrization} at $A$ with domain $U$ is a map $ \phi: U \rightarrow \mathfrak{P}^\star(X)$ such that there exists a smooth map $\varphi : U \rightarrow C^\infty(A,X)$ such that 
\begin{itemize}
\item  $\varphi(r_0)|_{A} = \mathrm{id}_A$, for some $ r_0 \in U,$
\item $\phi(r) = \varphi(r)(A),$  for all $r \in U$.
\end{itemize}
and we define the \textbf{locally projectable diffeology} $\p_{lp}$ as the diffeology generated by the family of local projectable parametrizations for each $A \in \mathfrak{P}^\star(X).$ 
\end{Definition}
Since $C^\infty(A,X)$ is equipped with the functional diffeology, the family of local projectable parametrizations has the covering property (1) and the smooth compatibility property (3) of Definition \ref{d:diffeology}. In fact, it constitutes an example of a prediffeology (see \cite[Definition 2.6]{DA}). The locally projectable parametrizations do not form a diffeology in most case. This explains the  formulation for $\p_{lp}.$ The power set $\mathfrak{P}(X),$ equipped with $\p_{lp},$ is denoted by $\mathfrak{P}_{lp}(X).$ 
%{\color{red} 
%\begin{Proposition}
%	The identity map on $X$ extends to a smooth map from $\mathfrak{P}_{lp}(X)$ to $\mathfrak{P}_{u}(X).$
%\end{Proposition} 
%\begin{proof}
%	Let $P$ be a plot in $\mathfrak{P}_{lp}(X)$. Then, locally, there exists $A \subset \mathfrak{P}(X)$ and a parametrization $\phi_A : U \rightarrow C^\infty(A,X)$ such that $P$ restricts on $U$ to a smooth map of the type $P|_U = \phi(A).$ Therefore, for any $x_A \in A,$  $\phi(x)$ is a plot $U \rightarrow X.$ Given a  covering $ \{U_i\}_{i \in I}$ of such domains $U_i,$ we get that $P = \bigcup_{i, x_{A_i}} \phi(x_{A_i}) \in \p_{u}.$
%\end{proof}
%}

\begin{Proposition} 
The identity map 
$ \mathrm{id}:\mathfrak{P}^\star_{lp}(X)\rightarrow \mathfrak{P}^\star_{u}(X) $
is smooth, namely, the locally projectable diffeology is finer than the union power set diffeology.
\end{Proposition}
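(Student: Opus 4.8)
The plan is to exploit that $\p_{lp}$ is by construction the diffeology \emph{generated} by the family $\mathcal{C}$ of local projectable parametrizations. Since $\langle\mathcal{C}\rangle$ is the smallest diffeology containing $\mathcal{C}$, and since the union power set diffeology $\p_u$ on $\mathfrak{P}^\star(X)$ is a genuine diffeology (closed under smooth compatibility and locality), the containment $\p_{lp}\subseteq\p_u$ --- equivalently, smoothness of the identity --- will follow as soon as we verify the single statement that \emph{every local projectable parametrization is itself a plot of} $\mathfrak{P}^\star_u(X)$. Indeed, any plot of $\p_{lp}$ is locally of the form $Q\circ F$ with $Q\in\mathcal{C}$ and $F$ a smooth map between domains; if $Q\in\p_u$ then $Q\circ F\in\p_u$ by smooth compatibility, and the locality axiom of $\p_u$ then absorbs the supremum of such a compatible family.

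So let $\phi:U\rightarrow\mathfrak{P}^\star(X)$ be a local projectable parametrization at some $A\in\mathfrak{P}^\star(X)$, witnessed by a smooth map $\varphi:U\rightarrow C^\infty(A,X)$ with $\phi(r)=\varphi(r)(A)$ for all $r\in U$. I would verify the criterion \textbf{UPD} head-on. Fix an arbitrary $r_1\in U$ and an initial condition $y_0\in\phi(r_1)=\varphi(r_1)(A)$, and select a point $a_0\in A$ with $\varphi(r_1)(a_0)=y_0$. The natural candidate for the required local implicit plot is the single ``orbit'' of $a_0$, namely $\sigma:U\rightarrow X$ defined by $\sigma(r)=\varphi(r)(a_0)$. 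By construction $\sigma(r)=\varphi(r)(a_0)\in\varphi(r)(A)=\phi(r)$ for every $r\in U$, and $\sigma(r_1)=y_0$, so $\sigma$ meets the demands of \textbf{UPD} on the neighborhood $V=U$ of $r_1$, provided it is genuinely a plot of $X$.

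The only point carrying content is precisely this smoothness of $\sigma$, and it is delivered by the definition of the functional diffeology on $C^\infty(A,X)$. Writing $c_{a_0}$ for the constant parametrization at $a_0$, which is a plot of $A$ for the subset diffeology since $a_0\in A$, the pair $(\varphi,c_{a_0})$ is a plot of $C^\infty(A,X)\times A$; composing it with the smooth evaluation map $\mathrm{ev}:C^\infty(A,X)\times A\rightarrow X$ exhibits $\sigma=\mathrm{ev}\circ(\varphi,c_{a_0})$ as a plot of $X$. This establishes that $\phi$ satisfies \textbf{UPD}, hence lies in $\p_u$, and by the reduction of the first paragraph the identity map is smooth. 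I do not anticipate a genuine obstacle: the entire argument reduces to recognizing that a single fixed point $a_0$ generates the implicit plot demanded by \textbf{UPD}, with cartesian closedness (smoothness of evaluation) as the one structural input. It is worth noting that the defining condition $\varphi(r_0)|_A=\mathrm{id}_A$ of a local projectable parametrization is never used here; it only governs how $\phi$ passes through $A$ at $r_0$ and is immaterial to membership in $\p_u$.
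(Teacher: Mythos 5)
Your proposal is correct and follows essentially the same route as the paper: reduce to showing that each local projectable parametrization satisfies \textbf{UPD}, then take the orbit $\sigma(r)=\varphi(r)(a_0)$ of a preimage point $a_0\in A$ as the local implicit plot. The only difference is that you spell out the smoothness of $\sigma$ via the evaluation map of the functional diffeology, which the paper leaves implicit.
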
 
\begin{proof}
It is sufficient to prove that any local projectable parametrization is a plot in $ \mathfrak{P}^\star_{u}(X) $, because then the diffeology generated by the family of local projectable parametrizations would be contained in $ \mathfrak{P}^\star_{u}(X) $.
Let $ A \in \mathfrak{P}^\star(X)$ and suppose that $ \phi: U \rightarrow \mathfrak{P}^\star(X)$ is a local projectable parametrization at $ A $, $ s_0\in U $, and $ x_0\in\phi(s_0) $. By definition, there exists a smooth map $\varphi : U \rightarrow C^\infty(A,X)$ such that $\phi(r) = \varphi(r)(A) $  for all $r \in U$.
In particular,  $x_0\in\phi(s_0) = \varphi(s_0)(A) $, so there exists an element $ a\in A $ with $ \varphi(s_0)(a)=x_0 $.
Define $ \sigma:U\rightarrow X $ by $ \sigma(r)=\varphi(r)(a) $, which is smooth. Moreover, $ \sigma(r)=\varphi(r)(a)\in \varphi(r)(A)=\phi(r) $. And, $ \sigma(s_0)=\varphi(s_0)(a)=x_0 $. Thus, $ \phi: U \rightarrow \mathfrak{P}^\star(X)$ is a plot in $ \mathfrak{P}^\star_{u}(X) $.
\end{proof}
Analogous to the case of the strong power set diffeology, one can observe that
every smooth set-valued map $ \phi:Y\rightarrow\mathfrak{P}_{lp}(X) $ is lower semi-continuous with respect to the D-topology.
Also, the map $ \imath:X\rightarrow\mathfrak{P}_{lp}(X) $ defined by $ \imath(x)=\{x\} $ is a strong embedding.

%Of course, we can have the same results for global projectable diffeology as well.}
\subsection{The globally projectable parametrizations on $\mathfrak{P}(X)$} \label{gp}
The monoid of smooth maps $C^\infty(X,X)$ equipped with the composition rule has a natural action on the left on $\mathfrak{P}(X)$ by 
\begin{equation}
\begin{array}{cccc}
L : & C^\infty(X,X) \times \mathfrak{P}(X) &\rightarrow &\mathfrak{P}(X) \\
& (f, A) & \mapsto & f(A)
\end{array} 
\end{equation}
\begin{Proposition}
$L$ is smooth when
\begin{itemize}
\item $C^\infty(X,X)$ is equipped with the functional diffeology,
\item $\mathfrak{P}(X)$ is equipped with either the weak power set diffeology, or the strong power set diffeology.
\end{itemize}
\end{Proposition}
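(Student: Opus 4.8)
The plan is to unfold smoothness of $L$ plot by plot. A plot of the domain is a pair $(Q,P)$, where $Q : W \to C^\infty(X,X)$ is a plot for the functional diffeology and $P : W \to \mathfrak{P}(X)$ is a plot for the chosen power set diffeology, and I must show that $R : r \mapsto Q(r)(P(r))$ is again a plot. The one tool used throughout is that, $Q$ being a plot for the functional diffeology, for every plot $\alpha$ in $X$ the map $(r,s) \mapsto Q(r)(\alpha(s))$ is a plot in $X$; precomposing with a smooth diagonal then shows that $r \mapsto Q(r)(\alpha(r))$, and more generally $(r,s) \mapsto Q(r)(\alpha(r,s))$ for any plot $\alpha(r,s)$, is a plot in $X$.

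For the \textbf{weak} case, fix $r_0$ with $R(r_0) \neq \emptyset$; then $P(r_0) \neq \emptyset$, and \textbf{WPD} applied to $P$ furnishes a local selection plot $\sigma : V \to X$ with $\sigma(r) \in P(r)$ on a neighborhood $V$ of $r_0$. I would then set $\tau(r) = Q(r)(\sigma(r))$: by the diagonal remark $\tau$ is a plot in $X$, and $\tau(r) = Q(r)(\sigma(r)) \in Q(r)(P(r)) = R(r)$, so $\tau$ is a local selection plot for $R$. Hence $R$ satisfies \textbf{WPD}, and this case is routine.

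For the \textbf{strong} case, fix $r_0$ and a plot $\rho_0$ in $X$ with $\mathrm{Im}(\rho_0) \subset Q(r_0)(P(r_0))$. The natural approach is: first \emph{lift} $\rho_0$ to a plot $\rho_0'$ in $X$ with $\mathrm{Im}(\rho_0') \subset P(r_0)$ and $Q(r_0) \circ \rho_0' = \rho_0$; then apply \textbf{SPD} to $P$ to extend $\rho_0'$ to a local smooth family $\sigma' : V \to \mathcal{D}$ with $\mathrm{Im}(\sigma'(r)) \subset P(r)$ and $\sigma'(r_0) = \rho_0'$; and finally post-compose, setting $\Sigma(r) = Q(r) \circ \sigma'(r)$. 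The family $\Sigma$ is a local smooth family of plots by the diagonal remark together with the smoothness of composition \cite[\S 1.59]{Igdiff}, it satisfies $\mathrm{Im}(\Sigma(r)) \subset Q(r)(P(r)) = R(r)$, and $\Sigma(r_0) = Q(r_0) \circ \rho_0' = \rho_0$, so $R$ satisfies \textbf{SPD}.

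The hard part is the very first step of the strong case: the lift $\rho_0'$ of $\rho_0$ through the smooth map $Q(r_0)$ into $P(r_0)$ need not exist, since smooth maps do not admit smooth lifts in general (already the map $x \mapsto x^2$ obstructs lifting a nonnegative plot that has no smooth square root). I expect this to be the main obstacle, and a complete argument must either secure the lift from the data at hand or, when no lift is available, produce the extension $\Sigma$ directly by exploiting the freedom in choosing $\Sigma(r)$ for $r \neq r_0$ (for instance by rescaling or reparametrizing inside $R(r)$), together with the constraint that strongness of $P$ forbids $P(r)$, and hence $R(r)$, from changing connectivity near $r_0$. Everything else, namely the smoothness bookkeeping for $\tau$ and for $\Sigma$, is routine.
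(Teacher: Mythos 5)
Your treatment of the weak case is correct and complete. The paper's own proof of this proposition is the single sentence ``each case follows from the definition of the functional diffeology,'' so your argument --- feed the local selection plot $\sigma$ of $P$ into $Q$ and restrict along the diagonal to get a local selection plot $\tau(r)=Q(r)(\sigma(r))$ for $R(r)=Q(r)(P(r))$ --- is already more explicit than what is written there. (The same diagonal trick applied to the local implicit plots of \textbf{UPD} shows that $L$ is also smooth for the union power set diffeology, which the paper omits from this proposition but includes in the next one for $L_D$.)

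For the strong case you have put your finger on exactly the right difficulty, and it is not merely an obstacle to your proof strategy: the statement appears to fail for the strong power set diffeology as defined. Take $X=\R^3$, let $P$ be the constant parametrization with value the unit sphere $S^2$ (every constant parametrization satisfies \textbf{SPD}: extend $Q_0$ by the constant family), and let $Q(r)(x,y,z)=(x,y,rz)$, a plot for the functional diffeology. Then $R(r)=Q(r)(P(r))$ is the ellipsoid $\{a^2+b^2+(c/r)^2=1\}$ for $r\neq 0$ and the closed disk $\{(a,b,0)\mid a^2+b^2\le 1\}$ for $r=0$. Consider the plot $\rho_0(s,t)=\bigl(\sqrt{1-s^2-t^2},0,0\bigr)$ on $\{s^2+t^2<1\}$, whose image lies in $R(0)$. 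Suppose $\Sigma$ were a local smooth family with $\Sigma(0)=\rho_0$ and $\mathrm{Im}(\Sigma(r))\subset R(r)$. Near $(r,s,t)=(0,0,0)$ write $\Sigma(r)(s,t)=(u,v,w)(r,s,t)$, jointly smooth. Since $w(0,s,t)=0$, Hadamard's lemma gives $w=r\widetilde{w}$ with $\widetilde{w}$ smooth; the constraint $u^2+v^2+(w/r)^2=1$ for $r\neq 0$ becomes $u^2+v^2+\widetilde{w}^{\,2}=1$, which holds everywhere by continuity, and evaluating at $r=0$ yields $\widetilde{w}(0,s,t)^2=s^2+t^2$. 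But $s^2+t^2$ admits no $C^1$ square root near the origin, so no such $\Sigma$ exists and $R$ is not a plot of $\mathfrak{P}_{s}(\R^3)$. This is precisely the lifting obstruction you isolated ($\rho_0$ does not factor through $Q(0)$), realized in a situation where the target genuinely moves; the ``freedom in choosing $\Sigma(r)$ for $r\neq r_0$'' cannot repair it.

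So your instinct was right: the first step of your strong-case argument is exactly where the proof must break, and it cannot be patched in the generality claimed. Note that your two-step strategy (lift, extend by \textbf{SPD}, push forward by $Q(r)$) does go through verbatim when each $Q(r_0)$ is a diffeomorphism, since then $\rho_0'=Q(r_0)^{-1}\circ\rho_0$ is the required lift --- which is why the paper's companion statement for $L_D$ on $\mathrm{Diff}(X)$ is fine, and why the strong case of the present proposition should either be dropped or be accompanied by an additional hypothesis on $Q$.
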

\begin{proof}
Each case follows from the definition of the functional diffeology.
\end{proof}
%{\color{red} Question: Would it be false for the power set diffeology and for the etale power set diffeology? (I do not manage it!)}
We note by $L_D$ the restriction of $L$ to $\mathrm{Diff}(X)$.
\begin{Proposition}
$L_D$ is smooth when
$\mathfrak{P}(X)$ is equipped with the weak power set, the union power set  or the strong power set diffeology.
\end{Proposition}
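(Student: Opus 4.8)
The plan is to handle the weak and strong cases separately from the union case, since the first two are immediate consequences of the preceding proposition. Recall that $L_D$ is the restriction of $L$ to $\mathrm{Diff}(X) \times \mathfrak{P}(X)$ and that the inclusion $\iota : \mathrm{Diff}(X) \hookrightarrow C^\infty(X,X)$ is smooth, since any plot of $\mathrm{Diff}(X)$ is in particular a plot for the functional diffeology on $C^\infty(X,X)$. Thus $L_D = L \circ (\iota \times \mathrm{id})$. As $L$ is smooth when $\mathfrak{P}(X)$ carries the weak or the strong power set diffeology, and a composition of smooth maps is smooth, $L_D$ is smooth in both of these cases.

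The substance of the statement is therefore the union case, which I would establish directly from the characterization \textbf{UPD}. Let $(P_1,P_2) : U \to \mathrm{Diff}(X) \times \mathfrak{P}_{u}(X)$ be a plot, with $P_1(r) = f_r$ and $P_2(r) = A_r$; the goal is to show that $r \mapsto f_r(A_r)$ satisfies \textbf{UPD}. Fix $r_0 \in U$ and $y_0 \in f_{r_0}(A_{r_0})$, and choose $x_0 \in A_{r_0}$ with $f_{r_0}(x_0) = y_0$. Since $P_2$ is a plot in $\mathfrak{P}_{u}(X)$, applying \textbf{UPD} to $P_2$ at $(r_0,x_0)$ produces an open neighborhood $V \subset U$ of $r_0$ and a local implicit plot $\tau : V \to X$ with $\tau(r) \in A_r$ for all $r \in V$ and $\tau(r_0) = x_0$.

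The key step is then to transport $\tau$ through the smooth family $f_r$. Because the functional diffeology on $C^\infty(X,X)$ is the coarsest one for which $\mathrm{ev} : C^\infty(X,X) \times X \to X$ is smooth, the pair $(P_1|_V, \tau) : V \to C^\infty(X,X) \times X$ is a plot, and hence $\sigma := \mathrm{ev} \circ (P_1|_V, \tau) : r \mapsto f_r(\tau(r))$ is a plot in $X$. By construction $\sigma(r) = f_r(\tau(r)) \in f_r(A_r)$ for every $r \in V$, while $\sigma(r_0) = f_{r_0}(x_0) = y_0$; thus $\sigma$ is the desired local implicit plot. This verifies \textbf{UPD} for $r \mapsto f_r(A_r)$, so it is a plot in $\mathfrak{P}_{u}(X)$, and $L_D$ is smooth for the union power set diffeology.

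I do not anticipate a genuine obstacle: the union case is a routine application of \textbf{UPD} combined with smoothness of the evaluation map. It is worth observing that invertibility of the $f_r$ is never used, so the same computation shows that the unrestricted map $L$ is already smooth for the union power set diffeology; the restriction to $\mathrm{Diff}(X)$ is kept only to match the framing of the statement.
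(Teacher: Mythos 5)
Your proof is correct and supplies the details that the paper's one-line proof (``each case follows from the definition of the functional diffeology'') leaves implicit: the weak and strong cases via the preceding proposition composed with the smooth inclusion $\mathrm{Diff}(X)\hookrightarrow C^\infty(X,X)$, and the union case by a direct verification of \textbf{UPD} using smoothness of the evaluation map. Your closing observation that invertibility of the $f_r$ is never used in the union argument---so that $L$ itself is already smooth for the union power set diffeology---is correct and in fact strengthens the preceding proposition, which omits the union case for $L$.
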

\begin{proof}
Each case follows from the definition of the functional diffeology.
\end{proof}
\begin{rem}[$C^\infty(X,X)$ or $\mathrm{Diff}(X)$?] When $X$ is a smooth compact manifold, $\mathrm{Diff}(X)$ is an open subset of $C^\infty(X,X)$ 
for its classical structure of Fr\'echet manifold, and hence in particular for the D-topology of the functional diffeology.
Therefore one can wonder whether, in the definitions, the choice of the use of the \textbf{group} $\mathrm{Diff}(X)$ would be more accurate  than the use of the \textbf{monoid} $C^\infty(X,X).$ Analyzing, e.g., \cite{Olv,Rob} where projectable symmetries are highlighted, the choice of $\mathrm{Diff}(X)$ may be justified by the definition of symmetries for functions on diffeologies, along the lines of \cite{Ma2020-3}. However, (diffeologically) projectable transformations must be generated by (smooth) transformations on $X,$ and the best space for this purpose is $C^\infty(X,X)$ in which $\mathrm{Diff}(X)$ is not an open subset. Therefore, out of precise technical statements comparing these two sets, we feel the necessity to consider the biggest one. However, this third projectable-type maps define a diffeology on power sets which has stronger properties than others.
\end{rem}

\begin{Definition}
We define \textbf{globally projectable parametrizations} of $ \mathfrak{P}^\star(X)$ the push-forward parametrization $\phi$ such that there exists locally a smooth map $P: U \rightarrow C^\infty(X,X) $ and $A \in \mathfrak{P}^\star(X)$ such that the restriction of $\phi$ to the domain $U$ can be defined by $P(\cdot)(A).$ These parametrizations define a diffeology that we denote by $\p_{gp}.$ The same definitions hold changing $C^\infty(X,X)$ into $\mathrm{Diff}(X)$, and we denote this second diffeology by $\p_{gp,D}.$ 
\end{Definition}

\begin{rem}
When $X$ is a compact finite dimensional manifold, a consequence of Nash-Moser inverse functions theorem is that $\mathrm{Diff}(X)$ is open in $C^\infty(X,X).$ However, even when $X=S^1 = \R/\Z, $ $\p_{gp} \neq \p_{gp,D}.$ Indeed, consider the 1-plot $$t \in (-0.25,0.25) \mapsto [-2|t|,2|t|].$$ Then its value at $t= 0$ shows that this is not the orbit of a diffeomorphism while this is possible to find a smooth map $\varphi: (-0.25,0.25) \times S^1 \rightarrow S^1, $ and hence a map $\Phi: (-0.25,0.25) \rightarrow C^\infty(S^1,S^1)$ defined by $\phi(t) = (x \mapsto \varphi(t,x))$ which globally parametrize this plot.   
\end{rem}

\begin{Theorem}
In $\p_{gp,D},$ $\overline{(.)}$ is smooth, and $\emptyset$ as well as $X$ are isolated.
\end{Theorem}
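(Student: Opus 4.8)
The plan is to exploit the local normal form of plots for the generated diffeology $\p_{gp,D}$ together with the two defining features of diffeomorphisms: they are bijections, and, having smooth inverses, they are homeomorphisms for the D-topology (recall that any smooth map is D-continuous). First I would record the local structure of a plot. Since $\p_{gp,D}$ is generated by the globally projectable parametrizations, to which the covering axiom automatically adjoins all constant parametrizations, every plot $P \colon U \to \mathfrak{P}(X)$ is, in a neighbourhood of each $r_0 \in U$, either constant or of the form $r \mapsto D(F(r))(A)$ for some $A \in \mathfrak{P}^\star(X)$, a smooth map $D$ into $\mathrm{Diff}(X)$ (for the functional diffeology), and a smooth map $F$ between domains. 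Because being a plot is a local condition (the locality axiom of Definition \ref{d:diffeology}), it suffices to verify each assertion on such a local piece and then invoke locality.

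For the smoothness of $\overline{(.)}$, I would use that any $f \in \mathrm{Diff}(X)$ is a D-homeomorphism, so that $f$ commutes with the D-closure, i.e.\ $\overline{f(A)} = f(\overline{A})$. Applying this with $f = D(F(r))$ gives, on a globally projectable piece, $\overline{(.)} \circ P(r) = \overline{D(F(r))(A)} = D(F(r))(\overline{A})$. Since $A \subseteq \overline{A}$, the set $\overline{A}$ is again nonempty, so $r \mapsto D(F(r))(\overline{A})$ is itself a (reparametrized) globally projectable parametrization, hence a plot of $\p_{gp,D}$; a constant piece with value $B$ is sent to the constant $\overline{B}$, which is trivially a plot. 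By locality, $\overline{(.)} \circ P$ is a plot, so $\overline{(.)}$ is smooth.

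For the isolation of $\emptyset$ and $X$, I would show that $\{\emptyset\}$ and $\{X\}$ are D-open, that is, that $P^{-1}(\{\emptyset\})$ and $P^{-1}(\{X\})$ are open for every plot $P$. Fix $r_0$ with $P(r_0)=\emptyset$ and pass to a local piece. A globally projectable piece has value $D(F(r))(A)$ with $A \neq \emptyset$, and since $D(F(r))$ is a bijection this value is nonempty; hence the piece through $r_0$ can only be the constant $\emptyset$, so $P \equiv \emptyset$ near $r_0$. Similarly, if $P(r_0)=X$, then on a globally projectable piece $D(F(r_0))(A)=X$ forces $A=X$ by bijectivity of $D(F(r_0))$, and then $D(F(r))(X)=X$ for every $r$, so $P \equiv X$ near $r_0$. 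This yields the openness of both preimages.

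The main point, and the reason the statement is phrased for $\p_{gp,D}$ rather than $\p_{gp}$, is precisely the interchange $\overline{f(A)} = f(\overline{A})$ and the bijectivity used in the isolation step: both genuinely require $f$ to be a diffeomorphism. For a general $f \in C^\infty(X,X)$ the image of a closed set need not be closed, $f$ need not commute with closure, and a proper subset may be mapped onto $X$, so neither the smoothness of $\overline{(.)}$ nor the isolation of $X$ would survive. I expect the steps needing the most care to be the justification of the local normal form of plots in the generated diffeology and the verification of the homeomorphism--closure interchange within the purely diffeological D-topology; the remaining set-theoretic bijectivity facts are routine.
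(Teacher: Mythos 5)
Your proof is correct and follows essentially the same route as the paper, whose entire proof is the single observation that $\overline{g(A)} = g(\overline{A})$ for every $g \in \mathrm{Diff}(X)$; your local normal form for plots of the generated diffeology and the bijectivity argument for the isolation of $\emptyset$ and $X$ supply exactly the details the paper leaves implicit. The only caveat is that the paper never fixes whether $\overline{(.)}$ denotes closure or (as in the later definition of Boolean diffeologies) complementation, but for a diffeomorphism the interchange identity holds in either reading, so your argument goes through unchanged.
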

\begin{proof}
The proof follows from the fact that for any $g \in \mathrm{Diff}(X),$ and for any $A \in \mathfrak{P}(X),$ $\overline{g(A)}= g (\overline{A}).$
\end{proof}

\begin{rem}
$\cup$ and $\cap$ are \textit{not} smooth in $\p_{gp,D}.$ Let us give a counter-example. Let $X=\R,$ let $A=B=\{0\}$ and let $g_1(t): x \mapsto x+t,$ $g_2(t): x \mapsto x-t.$ Let $P_1(t) = g_1(t)(A)$ and let $P_2(t) = g_2(t)(A).$ We have that $(P_1,P_2)\in \p_{gp,D}$ but
$$ P_1(t) \cap P_2(t) = \left\{ \begin{array}{l} \emptyset \hbox{ if } t \neq 0 \\
\{0\} \hbox{ if } t = 0 \end{array}\right.$$
Since $\emptyset$ is isolated in $\p_{gp,D},$ $t \mapsto P_1(t) \cap P_2(t) \notin \p_{gp,D}.$
The same arguments hold for $t \mapsto \overline{ P_1(t)} \cup \overline{P_2(t)} \notin \p_{gp,D}.$
\end{rem}

\section{Diffeologies on Borel algebras and measures}

\subsection{Boolean diffeologies on a Boolean algebra} \label{bdiff}

Let ${ \mathcal{A} }\subset \mathfrak{P}(X)$ be a Boolean algebra. We denote by $\mathcal{A}^\star = \mathcal{A }- \{\emptyset\}.$	
\begin{Definition} \label{BDstar}
Let $\p$ be a diffeology on $\mathcal{A}^\star.$ Then
\begin{enumerate}
\item[(1)] $\p$ is $\cup-$\textbf{stable} if the map $ \cup: (\mathcal{A}^\star)^2 \rightarrow \mathcal{A}^\star$ is smooth
\item[(2)] $\p$ is \textbf{complement-stable} if the map $ \overline{(.)}: (\mathcal{A}^\star-\{X\}) \rightarrow \mathcal{A}^\star(X)-\{X\}$ is smooth.
\item[(3)] $\p$ is \textbf{boolean} if (1) and (2) are fulfilled.
\end{enumerate}

\end{Definition}
With such a definition, a superficial reader may think that, as a direct consequence, all boolean operations such as intersection, symmetric difference are smooth if the diffeology is boolean. We here have to care about $\emptyset$ which is not in the basic target space $\mathfrak{P}^\star(X)$ of our plots. Again, we make this distinction because of our examples produced in next section, on which we may just say naively that $\emptyset$ is disconnected (diffeologically) from $\mathfrak{P}^\star(X).$ Let us consider the following example, which will fit with all our next diffeologies, and which will explain in few words that this choice is not suitable with the classical intuition. 
\begin{example}
Let $t \in \R$ and let $c(t)=[t-1,t+1] \in \mathfrak{P}^\star(\R)${ be a ``traveling interval'' path}. Let $\gamma(t) = c(t) \cap c(-t).$ Then $\gamma(t) = \emptyset$ if $t \notin [-1,1].$ 
\end{example}
Therefore, we have to produce diffeologies such that $\{\emptyset\}$ is not necessarily a disconnected component of $\mathfrak{P}(X).$

\begin{Theorem}
Let $\p$ be a boolean diffeology on $\mathcal{A}^\star.$ Then $\p \cup \overline{\p}$ generates local plots of a diffeology $\p^b$ on $\mathcal{A}$ for which boolean operations are smooth. We also call this diffeology a boolean diffeology (on $\mathcal{A}$).
\end{Theorem}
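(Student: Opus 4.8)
The plan is to set $\overline{\p}=\{\overline{Q}\mid Q\in\p\}$, where $\overline{Q}(r)=\overline{Q(r)}$ is the pointwise complement, and to define $\p^b=\langle\p\cup\overline{\p}\rangle$ as the diffeology generated by $\p\cup\overline{\p}$. First I would check that $\p\cup\overline{\p}$ is a parametrized cover of $\mathcal{A}$: for $A\in\mathcal{A}^\star$ the constant parametrizations are already in $\p$, while the constant parametrization with value $\emptyset$ is $\overline{(\,\cdot\mapsto X\,)}\in\overline{\p}$, since $X\in\mathcal{A}^\star$. Because $\p$ satisfies the smooth compatibility condition of Definition \ref{d:diffeology} and complement commutes with reparametrization, $\overline{Q}\circ F=\overline{Q\circ F}$, both $\p$ and $\overline{\p}$ are stable under right composition with smooth maps. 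Hence each $Q_i\circ F_i$ with $Q_i\in\p\cup\overline{\p}$ again lies in $\p\cup\overline{\p}$, and $\p^b$ is exactly the set of parametrizations that are \emph{locally} in $\p\cup\overline{\p}$, which is the "local plots'' appearing in the statement.

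Next I would prove that the complement $c:\mathcal{A}\to\mathcal{A}$, $c(A)=\overline{A}$, is smooth; this is the structural point of the construction. Since $c$ is an involution with $c\circ\p=\overline{\p}$ and $c\circ\overline{\p}=\p$, it merely permutes the generating family $\p\cup\overline{\p}$, so it carries $\p^b$ into $\p^b$. As $c=c^{-1}$, it is in fact a diffeomorphism of $(\mathcal{A},\p^b)$, and in particular $\overline{(.)}$ is smooth on all of $\mathcal{A}$, including $\emptyset\mapsto X$ and $X\mapsto\emptyset$.

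The real work is the smoothness of $\cup:\mathcal{A}\times\mathcal{A}\to\mathcal{A}$. Using the product diffeology and the local description of $\p^b$, it suffices to evaluate $\cup$ on pairs of generators near a point $r_0$, which gives four cases. When both plots are locally in $\p$, their values lie in $\mathcal{A}^\star$, the union is nonempty, and the $\cup$-stability of $\p$ yields a $\p$-plot directly. When a plot has the form $\overline{Q}$ I would use De Morgan, e.g. $\overline{Q_1}\cup\overline{Q_2}=\overline{Q_1\cap Q_2}$, and on the region where the relevant plots avoid $\emptyset$ and $X$ I would rewrite the $\overline{\p}$-pieces as genuine $\p$-plots through complement-stability and finish by $\cup$-stability. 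The main obstacle I expect is precisely the \emph{degenerate locus} where one of the plots attains $\emptyset$ (equivalently its complementary plot attains $X$): there the local model is forced to switch between $\p$ and $\overline{\p}$, and neither $\cup$-stability, which only sees maps into $\mathcal{A}^\star$, nor complement-stability, which only sees $\mathcal{A}^\star-\{X\}$, applies across it. This is exactly where the requirement that $\{\emptyset\}$ not be isolated is used; to handle it I would argue locally, producing on a full neighborhood of $r_0$ either an explicit selection into $\mathcal{A}^\star$ or an explicit complement-representative $\overline{R}$ with $R\in\p$, so that $P_1\cup P_2$ is seen to be locally in $\p\cup\overline{\p}$ even across the degenerate points.

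Finally, once $\cup$ and complement are smooth, every remaining Boolean operation is a composition of these and is therefore smooth: $A\cap B=\overline{\overline{A}\cup\overline{B}}$, $A\setminus B=A\cap\overline{B}$, and the symmetric difference $A\,\triangle\,B=(A\setminus B)\cup(B\setminus A)$. This establishes that $\p^b$ is a Boolean diffeology on $\mathcal{A}$. The only delicate step, and the one deserving the most care in the write-up, is the degenerate-locus analysis inside the proof that $\cup$ is smooth; everything else is a routine reparametrization-stability and generator-permutation argument.
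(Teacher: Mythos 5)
Your overall architecture coincides with the paper's: the paper likewise takes $\p^b$ to be generated by $\p\cup\overline{\p}$, uses the fact that complementation exchanges the two halves of the generating family (so that $\overline{(.)}$ is smooth and $\p$ is recovered as the subset diffeology on $\mathcal{A}^\star$), and then derives $\cap$ and $\Delta$ from $\cup$ and $\overline{(.)}$ by De Morgan, exactly as you propose. Your argument that the complement is a diffeomorphism because it permutes the generators is correct and is, if anything, cleaner than the paper's phrasing.

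The gap is in the step you yourself flag as ``the real work'': smoothness of $\cup$ in the mixed cases. Your plan is to apply De Morgan and then invoke $\cup$-stability or complement-stability on the region where the plots avoid $\emptyset$ and $X$, deferring the degenerate locus to a local argument that is described but not carried out. Consider $P_1=Q_1\in\p$ and $P_2=\overline{Q_2}$ with $Q_2\in\p$. Then $P_1\cup P_2=\overline{\,Q_2\cap\overline{Q_1}\,}$, and at any parameter $r_0$ with $Q_2(r_0)\subseteq Q_1(r_0)$ the inner parametrization takes the value $\emptyset$, so it cannot be a plot of $\p$ (a diffeology on $\mathcal{A}^\star$) on any neighborhood of $r_0$; nor does either hypothesis apply ($\cup$-stability only sees pairs of $\p$-plots, complement-stability only sees plots avoiding $X$), and the locus $\{r:\,Q_2(r)\subseteq Q_1(r)\}$ need not be open or closed, so the domain cannot simply be split into pieces on which one of the two local models works. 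Thus ``producing an explicit selection or an explicit complement-representative'' is a restatement of what must be shown rather than a proof, and it is not evident that it can be extracted from the stated axioms alone. To be fair, the paper's own proof disposes of exactly this point with the words ``it is clear that this diffeology $\p^b$ is now stable under $\cup$''; your write-up at least localizes the difficulty precisely, but it does not close it.
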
 

\begin{proof}
Let us first describe the plots at $X.$ These are the maps $P: U \rightarrow \mathcal{A}^\star, $ where $ U $ is an open subset of an Euclidean space, for which there exists an open cover $\{U_i, i \in I\}$ of $U $ and a family of plots $\{\phi_i,\varphi_i, i \in I \}$ such that: 
\begin{itemize} 
\item $\forall i \in I, \mathrm{dom}(\phi_i)=\mathrm{dom}(\varphi_i) = U_i$
\item $\forall i \in I, P|_{U_i} = \phi_i \cup \varphi_i.$ 
\end{itemize}
Since $\p$ is boolean, then $\overline{(.)}\circ  \phi_i$ and  $\overline{(.)}\circ  \phi_i$ are in $\overline{\p},$ which shows that $\overline{(.)}\circ  P$ defines a parametrization at $\{\emptyset\}$ such that, if $W =  \left(\overline{(.)}\circ  P\right)^{-1}(\mathfrak{P}^\star(X)),$ $W$ is open in $U$ and  $\left(\overline{(.)}\circ  P\right)|_{W} \in \p.$ Therefore, $\p$ is the subset diffeology on $\mathcal{A}^\star$ of the diffeology generated by $\p \cup \overline{\p}$ on $\mathcal{A}^\star.$ Moreover, it is clear that this diffeology $\p^b$ is now stable under $\cup$ and $\overline{(.)}$, and that these operations are smooth. Therefore, the other two classical operations, the intersection $\cap$ and the symmetric difference $\Delta$ are also smooth. 
\end{proof}
Therefore, for any Boolean diffeology on $\mathcal{A}^\star$ in the sense of Definition \ref{BDstar} we are able to build up a diffeology on $\mathcal{A}$ such that: 
\begin{itemize}
\item the Boolean operations $\cup, \cap, \overline{(.)}$ and $\Delta$ are smooth, 
\item this diffeology coincides with the initial diffeology on $\mathfrak{P}^\star(X) - \{X\}$
\item $\{\emptyset\}$ is not a priori a disconnected component.
\end{itemize} 
We call it \textbf{Boolean diffeology} on $\mathcal{A},$ vocabulary that also applies to the maximal Boolean subalgebra $\mathfrak{P}(X).$ This construction will be discussed %on an example in section \ref{gp} and on another context 
more extensively in section \ref{diffmeas} but in a more general way, one has to know whether such a Boolean diffeology exists, because our construction starts from a Boolean diffeology on $\mathfrak{P}^\star(X),$ which is assume to be at hand. 

For this, let us consider not only $\mathfrak{P}(X)$ but also any boolean algebra $\mathcal{A} \subset \mathfrak{P}(X).$
\begin{Lemma} \label{full}
Let $\mathcal{A}$ be a Boolean algebra on $X.$ 
The  indiscrete diffeology, that is, the diffeology defined by all maps
$$ U \rightarrow \mathcal{A},$$ where $U$ is an open subset of an Euclidean space, is a Boolean algebra. 
\end{Lemma}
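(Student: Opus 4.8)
The plan is to invoke the universal property of the coarse (indiscrete) diffeology: any set map whose \emph{target} carries the indiscrete diffeology is automatically diffeologically smooth, regardless of the diffeology placed on the source. First I would record that the family $\p_{\mathrm{ind}}$ of \emph{all} parametrizations $U \to \mathcal{A}$ is genuinely a diffeology, as already noted for the coarse diffeology in the preliminaries: the covering, locality and smooth-compatibility conditions of Definition \ref{d:diffeology} hold vacuously, since every parametrization is by fiat a plot.

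The heart of the argument is a one-line observation. Let $(Z,\p_Z)$ be any diffeological space and $f : Z \to (\mathcal{A},\p_{\mathrm{ind}})$ any map; for every plot $P \in \p_Z$ the composite $f \circ P$ is a parametrization in $\mathcal{A}$, hence a plot for $\p_{\mathrm{ind}}$, so by Definition \ref{d:diffeolmap} the map $f$ is smooth. Applying this with $Z = \mathcal{A}\times\mathcal{A}$ (product diffeology) for the binary operations and $Z = \mathcal{A}$ for the unary one, I conclude that $\cup,\ \cap,\ \Delta : \mathcal{A}\times\mathcal{A} \to \mathcal{A}$ and $\overline{(.)} : \mathcal{A} \to \mathcal{A}$ are all smooth. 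This is precisely the assertion that $\p_{\mathrm{ind}}$ is a Boolean diffeology in the sense of Definition \ref{BDstar}, which yields the sought-after existence.

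There is no real obstacle here. The only bookkeeping is the remark that the product diffeology of two copies of $(\mathcal{A},\p_{\mathrm{ind}})$ is again indiscrete, since each projection composed with an arbitrary parametrization is automatically a plot; and even this is superfluous, as smoothness of a map into an indiscrete target is insensitive to the source diffeology. I would close by observing that this construction meets the concern raised just before the statement: the D-topology associated with $\p_{\mathrm{ind}}$ is the indiscrete topology, so $\{\emptyset\}$ is \emph{not} a disconnected component of $\mathcal{A}$, which is exactly the feature demanded after Definition \ref{BDstar}. Since every map into the indiscrete target is smooth however $\emptyset$ is treated, no subtlety concerning the empty set intervenes, and the distinction between $\mathcal{A}$ and $\mathcal{A}^\star$ is immaterial here.
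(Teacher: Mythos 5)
Your proposal is correct and is exactly the ``direct'' argument the paper has in mind (the paper gives no details beyond saying the proof is direct): every map into a space carrying the indiscrete diffeology is smooth, so all Boolean operations $\cup$, $\cap$, $\Delta$, $\overline{(.)}$ are automatically smooth, and the restriction to $\mathcal{A}^\star$ or $\mathcal{A}^\star-\{X\}$ required by Definition~\ref{BDstar} changes nothing since the subset diffeology of an indiscrete diffeology is again indiscrete. Your closing remarks about the product diffeology and the empty set are accurate but, as you note yourself, superfluous.
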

%{\color{blue} ARA:Here you mean the indiscrete diffeology on $\mathcal{A}$?}
The proof is direct. Therefore, one can wish to ``complete'' a diffeology to a Boolean diffeology.
\begin{Theorem}
Let $\mathcal{A}$ be a Boolean algebra equipped with a diffeology $\p.$
Then, there exists a unique Boolean diffeology, minimal for inclusion, which contains $\p.$ 
\end{Theorem}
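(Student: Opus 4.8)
The plan is to obtain $\p^b$ as the intersection of the family of all Boolean diffeologies on $\mathcal{A}$ that contain $\p,$ and the work is to check that this intersection is again such a diffeology and that it is the least element of the family for inclusion.

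First I would observe that this family is non-empty: by Lemma \ref{full} the indiscrete diffeology on $\mathcal{A}$ is Boolean, and since it consists of \emph{all} parametrizations into $\mathcal{A}$ it certainly contains $\p.$ Write $\mathcal{F}$ for the set of Boolean diffeologies on $\mathcal{A}$ containing $\p$ and set $\p^b = \bigcap_{\p' \in \mathcal{F}} \p'.$ Because each of the three conditions in Definition \ref{d:diffeology} (covering, locality, smooth compatibility) is stable under arbitrary intersection, $\p^b$ is a diffeology; and since $\p \subseteq \p'$ for every $\p' \in \mathcal{F},$ we get $\p \subseteq \p^b.$

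The main step, and the one requiring care, is to show that $\p^b$ is Boolean, i.e. that $\cup, \cap, \overline{(.)}$ and $\Delta$ are all smooth for $\p^b.$ The key auxiliary fact is that the product diffeology commutes with intersection: a parametrization $Q \colon U \to \mathcal{A} \times \mathcal{A}$ lies in the product diffeology of $\p^b$ precisely when $\pi_1 \circ Q, \pi_2 \circ Q \in \p^b,$ that is when $\pi_i \circ Q \in \p'$ for every $\p' \in \mathcal{F},$ which says exactly that $Q$ lies in the product diffeology of every $\p' \in \mathcal{F}.$ Granting this, take for instance $\cup$ and any plot $Q$ of $\mathcal{A}\times\mathcal{A}$ for the product diffeology of $\p^b$; then $Q$ is a product-plot for each $\p',$ and since each $\p'$ is Boolean, $\cup \circ Q \in \p'$ for every $\p' \in \mathcal{F},$ so $\cup \circ Q \in \p^b.$ The same plot-by-plot argument gives smoothness of $\cap$ and $\Delta,$ while for the unary operation $\overline{(.)}$ one uses directly that any plot of $\p^b$ is a plot of each $\p'$ and that each $\p'$ makes the complement smooth. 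Hence $\p^b \in \mathcal{F}.$

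Finally, minimality and uniqueness are immediate from the construction: by definition of intersection $\p^b$ is contained in every member of $\mathcal{F},$ so it is the least element of $\mathcal{F}$ for inclusion, and as a least element it is in particular the unique minimal one. I expect the only genuine obstacle to be the bookkeeping of the main step, specifically the identity that the product diffeology of an intersection of diffeologies equals the intersection of the product diffeologies; once this is in hand, the preservation of smoothness of every Boolean operation under intersection is uniform and tested plot-by-plot, with no special treatment needed for $\emptyset$ and $X.$
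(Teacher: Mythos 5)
Your proposal is correct and follows essentially the same route as the paper: both arguments consider the family of Boolean diffeologies on $\mathcal{A}$ containing $\p$, observe it is non-empty via the indiscrete diffeology of Lemma \ref{full}, and use the fact that smoothness of the Boolean operations is preserved under intersection of diffeologies (your reduction via the identity ``product diffeology of an intersection equals intersection of the product diffeologies'' is exactly the content the paper compresses into its remark that a map $X\times X\to X$ smooth for $\p_1$ and $\p_2$ is smooth for $\p_1\cap\p_2$). The only divergence is at the final step, where the paper invokes Zorn's Lemma while you take the global intersection of the whole family directly; your version is in fact the cleaner one, since Zorn's Lemma by itself would not deliver uniqueness of the minimal element, whereas the intersection argument gives a least element outright.
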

\begin{proof}
Let $\mathcal{S}(\p)$ be the set of {Boolean} diffeologies that contain $\p.$ $\mathcal{S}(\p)\neq \emptyset$ by Lemma \ref{full}. Moreover, if a map $f: X \times X \rightarrow X$ is smooth for two different diffeologies $\p_1$ and $\p_2$ on $X,$ then it is smooth for $\p_1 \cap \p_2,$ then $\mathcal{S}(\p)$ is stable for intersection. Therefore, for the partial order $\subset$ on $\mathcal{S}(\p),$ applying Zorn's Lemma, we get that $\mathcal{S}(\p)$ has a unique minimal element for $\subset.$ \end{proof}
Hence, we can state:
\begin{Lemma}
There exists a minimal Boolean diffeology on $\mathfrak{P}(X)$ that contains $\p_{gp,D}.$
\end{Lemma}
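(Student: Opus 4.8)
The plan is to obtain this Lemma as an immediate specialization of the Theorem immediately preceding it, namely the one asserting that any Boolean algebra equipped with a diffeology admits a unique minimal Boolean diffeology extending it. First I would set $\mathcal{A} = \mathfrak{P}(X)$ and $\p = \p_{gp,D}$, and check that these satisfy the hypotheses of that Theorem. The Boolean-algebra requirement is immediate: the power set $\mathfrak{P}(X)$, equipped with $\cup$, $\cap$ and $\overline{(.)}$, is the maximal Boolean algebra on $X$. So the only genuine checkpoint is that $\p_{gp,D}$ is a bona fide diffeology on all of $\mathfrak{P}(X)$.

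On this point I would recall that $\p_{gp,D}$ was introduced via globally projectable parametrizations of $\mathfrak{P}^\star(X)$ and was already declared to be a diffeology there. To regard it as a diffeology on the full $\mathfrak{P}(X)$ one simply adjoins $\emptyset$: since every $g \in \mathrm{Diff}(X)$ is a bijection, no parametrization of the form $r \mapsto P(r)(A)$ with $P : U \rightarrow \mathrm{Diff}(X)$ and $A \in \mathfrak{P}^\star(X)$ can attain the value $\emptyset$, while $g(X)=X$ for all such $g$; hence $\emptyset$ and $X$ enter only through locally constant plots and are isolated, exactly as recorded in the earlier Theorem stating that $\emptyset$ and $X$ are isolated in $\p_{gp,D}$. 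Thus $\p_{gp,D}$ is a diffeology on $\mathfrak{P}(X)$, with the covering, locality, and smooth-compatibility axioms inherited with no further work.

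With the hypotheses in place I would invoke that Theorem directly: applied to $(\mathfrak{P}(X), \p_{gp,D})$ it yields a unique Boolean diffeology, minimal for inclusion, containing $\p_{gp,D}$, which is precisely the assertion. If one prefers a self-contained argument rather than a citation, the same conclusion follows from the Zorn-type reasoning behind that Theorem: the family of Boolean diffeologies containing $\p_{gp,D}$ is nonempty, since it contains the indiscrete diffeology, which is Boolean by Lemma \ref{full}; and it is closed under arbitrary intersection, because if $\cup$ and $\overline{(.)}$ are smooth for each member $\p_i$, then any plot of the product diffeology of $\bigcap_i \p_i$ is a plot of each $\p_i \times \p_i$, so its image under $\cup$ (resp. $\overline{(.)}$) lies in every $\p_i$ and hence in $\bigcap_i \p_i$. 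Consequently the intersection of all such diffeologies is itself Boolean and is the least element. I do not anticipate any real obstacle here: all the substance lies in the preceding Theorem and in Lemma \ref{full}, and the present statement is merely their combination for the distinguished instance $\mathcal{A} = \mathfrak{P}(X)$, $\p = \p_{gp,D}$.
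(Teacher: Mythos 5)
Your proposal is correct and follows the paper's own route: the paper states this Lemma as an immediate consequence of the preceding Theorem (existence of a unique minimal Boolean diffeology containing any given diffeology on a Boolean algebra), applied to $\mathcal{A}=\mathfrak{P}(X)$ and $\p=\p_{gp,D}$, with nonemptiness of the family of Boolean diffeologies guaranteed by the indiscrete one. Your extra checks (that $\p_{gp,D}$ extends to a diffeology on all of $\mathfrak{P}(X)$ with $\emptyset$ and $X$ isolated, and the intersection-stability argument) only make explicit what the paper leaves implicit.
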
 

A more ``concrete'' example of a Boolean diffeology on a Boolean algebra, which is not the minimal one, will be given in section \ref{diffmeas}.

\subsection{Diffeologies on a Borel algebra}\label{diffmeas}

Let us consider a topology $\tau$ on the diffeological space $(X,\p_X)$ such that any plot $P \in \p_X$ is continuous with respect to $\tau,$ and let $\mathcal{B}(\tau)$ its associated $\sigma-$algebra. 
Let $\mathcal{B}^\star(\tau) = \mathcal{B}(\tau) - \{\emptyset\} $.
Then, for each diffeology $\mathcal{D}_{\mathfrak{P}}$ on $\mathfrak{P}(X)$ (resp. $\mathfrak{P}^\star(X)$),  $\mathcal{B}(\tau)$ (resp. $\mathcal{B}^\star(\tau)$) can be equipped 
with the subset diffeology, as well as $\mathcal{B}^\star(\tau)-\{X\}$. Since this carries no ambiguity in this section, we note them all by $\p_\mathcal{B}.$
\begin{Definition} \label{d:smooth-finite-mu}
\begin{itemize}
\item Let $\mu$ be a finite Borel measure on $X.$ Then $\mu$ is called \textbf{smooth} if it is a smooth map  for the diffeology $\p_\mathcal{B}.$  
\item Let $\mu$ be a Borel measure on $X.$ Then $\mu$ is \textbf{smooth} if $$\forall A \in \mathcal{B}(\tau), \quad 0<\mu(A) < +\infty \quad\Longrightarrow\quad \mu_A=\mu(\cdot \cap A) \hbox{ is smooth}.$$
\end{itemize}
\end{Definition}

\begin{Theorem}
Let $\tau$ be a topology on a diffeological space $(X,\p_X)$ such that any plot $P \in \p_X$ is continuous with respect to $\tau.$
Let $\p_\mathcal{B}$ be $\cup-$stable diffeology on $\mathcal{B}(\tau).$ Let $Mes(\p_\mathcal{B})$ be the space of smooth measures with respect to $\p_\mathcal{B^\star}.$ 
Then, if $Mes(\p_\mathcal{B}) \neq \emptyset,$
\begin{enumerate}
\item $\forall \mu \in Mes(\p_\mathcal{B}),$ the maps
$$ (A,B) \in \mathcal{B}^\star(\tau)^2 \mapsto \mu(A \cup B),$$
%$$ (A,B) \in \mathcal{B}^\star(\tau)^2 \mapsto \mu(A \cap B),$$ 
$$ A \in \mathcal{B}^\star(\tau) - \{X\}\mapsto \mu(\overline{A}) $$
%and $$ (A,B) \in \mathcal{B}^\star(\tau)^2 \mapsto \mu(A \Delta B)$$
are smooth.
\item Therefore, $\p_\mathcal{B}$ generates a Boolean diffeology on $\mathcal{B}(\tau)$ following the procedure from section \ref{bdiff}, and 
\item The set of finite smooth measures for $\p_\mathcal{B}$ is a generating family for a boolean Fr\"olicher structure, i.e. a boolean reflexive diffeology, on $\mathcal{B}(\tau).$
\end{enumerate}
\end{Theorem}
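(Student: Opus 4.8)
The plan is to prove the three assertions in turn, the first being the analytic core and the third the payoff. For (1) I treat the union by composition and the complement by additivity. Since $\p_\mathcal{B}$ is $\cup$-stable, the map $\cup\colon (\mathcal{B}^\star(\tau))^2 \to \mathcal{B}^\star(\tau)$ is smooth, and every $\mu \in Mes(\p_\mathcal{B})$ is by definition smooth on $\mathcal{B}^\star(\tau)$; hence $(A,B)\mapsto \mu(A\cup B)=\mu\circ\cup(A,B)$ is smooth as a composite. For a finite measure this is immediate, and for a general smooth measure one works locally along a plot, replacing $\mu$ by a finite localization $\mu_C=\mu(\cdot\cap C)$ with $0<\mu(C)<+\infty$, which is smooth by Definition \ref{d:smooth-finite-mu}. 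The complement resists composition, because $\p_\mathcal{B}$ is not assumed complement-stable; here I use that for a finite smooth $\mu$ and $A\in\mathcal{B}^\star(\tau)-\{X\}$ one has $\mu(\overline{A})=\mu(X)-\mu(A)$, so that $A\mapsto\mu(\overline{A})$ is the difference of the constant $\mu(X)$ and the smooth map $A\mapsto\mu(A)$, hence smooth; the general case again reduces to the finite localizations $\mu_C$.

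For (2), with (1) in hand I invoke the constructions of Section \ref{bdiff}: the existence theorem there yields a unique minimal Boolean diffeology containing $\p_\mathcal{B}$, while the $\p\cup\overline{\p}$ procedure describes explicitly how $\emptyset$ is adjoined without being forced into a disconnected component. This produces a Boolean diffeology on $\mathcal{B}(\tau)$ that agrees with $\p_\mathcal{B}$ away from $X$ and $\emptyset$.

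For (3), I take $\F_g$ to be the set of finite smooth measures and let $(\mathcal{B}(\tau),\F,\C)$ be the Fr\"olicher structure it generates, so that $\C=\{c : \F_g\circ c\subset C^\infty(\R,\R)\}$. Smoothness of a map into this space may be tested against the generators, namely a map is Fr\"olicher-smooth precisely when its composite with each $\mu\in\F_g$ is smooth. The complement is then clean: $\mu\circ\overline{(\cdot)}=\mu(X)-\mu(\cdot)$ lies in $\F$, so $\overline{(\cdot)}$ carries contours to contours. For the union I would combine the $\cup$-stability of $\p_\mathcal{B}$ with part (1); here one records that $\p_\mathcal{B}$ is finer than the generated diffeology $\p_\infty(\F)$ (by a one-dimensional Boman's theorem argument showing every $\p_\mathcal{B}$-plot is an $\F$-plot), although, as noted below, this inclusion points the unhelpful way. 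Finally, since a Fr\"olicher structure is exactly a reflexive diffeological space, the generated structure is a reflexive diffeology, and by the previous points it is Boolean.

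The step I expect to be hardest is the union in (3). Passing from the $\cup$-stability of $\p_\mathcal{B}$ to smoothness of $\cup$ for the coarser generated structure is not automatic: writing $\mu(A\cup B)=\mu(A)+\mu(B)-\mu(A\cap B)$ shows that one must control $\mu$ on intersections, and the contours of $(\mathcal{B}(\tau),\F,\C)$ need not be plots of $\p_\mathcal{B}$, so the finer/coarser inclusion works against us. Reconciling these two structures, together with the bookkeeping needed to handle non-finite measures through their finite localizations $\mu_C$, is the delicate part of the argument.
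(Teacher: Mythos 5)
Your treatment of parts (1) and (2) is exactly the paper's: for the union you compose a pair of $\p_\mathcal{B}$-plots $P_1,P_2$ into the single plot $(r,s)\mapsto P_1(r)\cup P_2(s)$ guaranteed by $\cup$-stability and then apply the finite localization $\mu_A=\mu(\cdot\cap A)$, which is smooth by Definition \ref{d:smooth-finite-mu}; for the complement the paper uses the identity $\mu_A\circ\overline{(.)}\circ P_1=\mu(A)-\mu_A\circ P_1$, which is your additivity argument carried out on the localization rather than on $\mu(X)$ (a necessary refinement when $\mu$ is not finite, and you do note the reduction to $\mu_C$). The paper then continues in the same spirit with inclusion-exclusion, $\mu_A\circ(P_1\cap P_2)=\mu_A\circ P_1+\mu_A\circ P_2-\mu_A\circ(P_1\cup P_2)$ and $\mu_A\circ(P_1\Delta P_2)=\mu_A\circ(P_1\cup P_2)-\mu_A\circ(P_1\cap P_2)$, to get smoothness of $\cap$ and $\Delta$ along $\p_\mathcal{B}$-plots as well. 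Part (2) is, as in your write-up, an appeal to the constructions of Section \ref{bdiff}.

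The difficulty you isolate in part (3) is genuine, and you should know that the paper does not resolve it either: its proof verifies the Boolean identities only after precomposition with plots of $\p_\mathcal{B}$, and then concludes in one sentence that $Mes(\p_\mathcal{B})$ ``will serve as a generating set of functions for a Fr\"olicher structure for which each Boolean operation is smooth.'' As you observe, the contours of the Fr\"olicher structure generated by the finite smooth measures are all paths $c$ with $\mu\circ c\in C^\infty(\R,\R)$ for every generator $\mu$, and these need not be $\p_\mathcal{B}$-plots; to conclude that $\cup$ is smooth for the generated structure one would need $t\mapsto\mu\bigl(c_1(t)\cup c_2(t)\bigr)$ smooth for arbitrary contours $c_1,c_2$, and the inclusion-exclusion identity only trades this for the equally unestablished smoothness of $t\mapsto\mu\bigl(c_1(t)\cap c_2(t)\bigr)$. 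Knowing $\mu\circ c_1$ and $\mu\circ c_2$ are smooth gives no control on either. So your proposal has a gap at exactly this step, but it is a gap it shares with the source; if anything, your explicit identification of the finer/coarser mismatch is a more honest account of the state of the argument than the paper's. A clean repair would be either to weaken the claim in (3) to smoothness of the Boolean operations along $\p_\mathcal{B}$-plots (which is what is actually proved), or to enlarge the generating family so that the functions $(A,B)\mapsto\mu_C(A\cap B)$ are controlled by construction.
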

%{\color{blue} ARA:I think you agree with me that the proof needs a little bit more clarification! }

%{\color{red} JPM: Is this better?}

%{\color{blue} ARA: Yes, of curse, I only didn't get why $ \mu_A \circ (P_1 \Delta P_2) $ is used in the proof.}
\begin{proof}
If $\p_\mathcal{B}$ is $\cup-$stable, $\forall (P_1,P_2) \in  \p_\mathcal{B},$
if $P_1: U_1 \rightarrow \p_B, $ if $P_2 :U_2 \rightarrow \p_\mathcal{B}, $ 

\begin{itemize}
\item $P_1 \cup P_2 : U_1 \times U_2 \rightarrow \mathcal{B} \in \p_\mathcal{B}$ and hence,  following Definition \ref{d:smooth-finite-mu}, $\forall A \in \mathcal{B}(\tau),$ such that  $0<\mu(A) < +\infty,$ then $\mu_A \circ (P_1 \cup P_2)$ is smooth. %{\color{red} (we have here to understand that smoothness is with respect to the evaluation of plots of $\p_\mathcal{B}$ with respect to any finite measure $\mu_A.$)}

\item The same way, $\mu_A \circ \overline{(.)} \circ P_1 =   \mu(A)  - \mu_A \circ P_1$ is smooth. and this relation extends to the induced Boolean diffeology on $\mathcal{B}(\tau).$
\item Since $\mu$ is a measure, with the obvious notations, $\mu_A \circ (P_1 \cap P_2) = \mu_A \circ P_1 + \mu_A \circ P_2 - \mu_A \circ (P_1 \cup P_2)$ is smooth
and  hence we can prove smoothness of $\Delta$ by considering ${\mu_A \circ (P_1 \Delta P_2)} = \mu_A \circ (P_1 \cup P_2) - \mu_A \circ (P_1 \cap P_2)$  which shows that $\Delta$  is smooth.
\end{itemize}
Thus, $Mes(\p_\mathcal{B})$ will serve as a generating set of functions for a Fr\"olicher structure on $\mathcal{B}(\tau)$ for which each Boolean operation is smooth.
\end{proof}
Hence, we have constructed two other diffeologies from a given $\cup-$stable one on $\mathcal{B}(\tau)^\star:$ a Boolean diffeology on $\mathcal{B}(\tau)$ and its reflexive completion.
Let us now turn to a practical example.
\begin{rem} \label{vaguediff}
Let $M$ be a non-compact, smooth Riemannian manifold equipped with its Riemannian measure $\lambda$ and let $\mathcal{B}$ be its (classical) Borel algebra. 
Let $C_c^{\infty}(M)$ be the set of smooth, real-valued and compactly supported functions on $M.$ Then the evaluation maps
$$ ev_f : A \in \mathcal{B} \mapsto  \int_B f d\lambda,$$
for $f \in C_c^{\infty}(M),$ define a Fr\"olicher structure on   $\mathcal{B}.$ This Fr\"olicher structure is not $\cup-$stable. Indeed, consider $M=\R,$ $P_1(t) = (-\infty,t+0.5)$ and $P_2(t) =(-t + 0.5,+\infty),$ and a test function $f$ which is a mollifier at $0 $ with support $[-1,1],$ then it is easy to check that
\begin{itemize}
\item if $t>0,$ $ev_f(P_1(t)\cup P_2(t))= ev_f(\R)=1$
\item $$\lim_{t \rightarrow 0^-} \frac{ev_f(t) - ev_f(0)}{t}= f(0,5) \neq 0,$$
\end{itemize} which shows that the $\cup-$operation is not smooth. Therefore, it can be completed to a  $\cup-$stable diffeology, from which we can define a Boolean Fr\"olicher structure on $\mathcal{B}(\tau),$ but evaluations via smooth maps in $C_c^{\infty}(M)$ will no longer all be smooth with respect with this new diffeology. 

In a second approach, consider all the $\cup-$stable diffeologies on $\mathcal{B}^\star(\tau)$ such that the maps
$$\forall n \in \N^*, (A_1,...A_n)\in \mathcal{B}^\star(\tau)^n \mapsto  \int_{\cup_{i \in \N_n}A_i} f d\lambda$$   are smooth. 
We know that the discrete diffeology on $\mathcal{B}^\star(\tau)$ fulfills this condition, but we actually have no result to prove the existence of a maximal $\cup-$stable diffeology that makes these maps smooth.
\end{rem}

Therefore, in order to get a ``reasonable'' diffeology, one has to adapt the setting, see section \ref{measdiff} 
\subsection{Hadamard and Fomin differentiability on a vector space: the diffeological viewpoint} \label{Fom}
%We now restrict our investigations to a Fr\"olicher space $(X,\F,\C).$ We now restrict ourselves to $\mathcal{B}(X)$ the Borel algebra of the $D-$topology. 
Let us now assume that $X$ is a diffeological vector space, equipped with a topology $\tau$ and associated Borel algebra $\mathcal{B}(\tau)$ as before. 
We now examine differentiation of measures with respect to a non zero vector $v \in X.$
Let $<v>$ be the vector space spanned by $v,$ which we identify with its one-parameter $\mathrm{Diff}(X)-$subgroup of transformation. 
We mimic the construction of the globally projectable diffeology $\p_{gp,D}$ by replacing the group of diffeomorphisms $\mathrm{Diff}(X)$ by $<v>.$

\begin{Theorem}
The family
$$\p_v = \left\{ A + (f\circ P)v \, | \, (A,P,f) \in \mathfrak{P}(X)\times\p_\infty(\R)\times C^\infty(\R,\R) \right\}$$
form a diffeology on $\mathfrak{P}(X)$ 
\begin{itemize}
\item which is complement-stable
\item for which $\emptyset$ and $X$ are isolated.
\end{itemize}
\end{Theorem}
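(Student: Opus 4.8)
The plan is to check the three axioms of Definition \ref{d:diffeology} for $\p_v$ and then the two extra properties, using throughout the single elementary fact that the translation $T_s\colon x\mapsto x+sv$ is a bijection of $X$ with inverse $T_{-s}$. A preliminary simplification is useful: over a fixed domain $U$, the parametrizations $r\mapsto A+(f\circ P)(r)v$ with $P\in\p_\infty(\R)$ and $f\in C^\infty(\R,\R)$ are exactly the maps $r\mapsto A+g(r)v$ with $A\in\mathfrak{P}(X)$ and $g\in C^\infty(U,\R)$, since $f\circ P$ ranges over all of $C^\infty(U,\R)$ (take $f=\mathrm{id}_\R$). With this description, the covering axiom follows by taking $g\equiv 0$, so that the constant map with value $A$ equals $A+0\cdot v$; and smooth compatibility follows because $(A+gv)\circ F=A+(g\circ F)v$ for any smooth $F\colon V\to U$, with $g\circ F\in C^\infty(V,\R)$.

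I expect locality to be the main obstacle, and I would handle it exactly as the globally projectable diffeology $\p_{gp,D}$ of Section \ref{gp} is handled, namely by reading $\p_v$ as the set of parametrizations that are \emph{locally} of the displayed form; then locality holds by construction. The reason a strictly global reading is insufficient is worth recording: on an overlap, two descriptions $A_1+g_1v$ and $A_2+g_2v$ of the same parametrization force $A_2=A_1+cv$ for some translation amount $c$, and this amount is well defined only when $A_1$ is not invariant under a nonzero translation along $v$. When $A_1$ is $v$-periodic (for instance $A_1=\Z\subset\R=X$ with $v=1$), the local translation amount is defined only modulo the period, and a nontrivial monodromy of the domain can obstruct the assembly of a single global smooth $g$. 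This is precisely why the construction must be phrased locally.

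For complement-stability, the key identity is $X\setminus(A+sv)=(X\setminus A)+sv$, which holds because $T_s$ is a bijection: $y\in X\setminus(A+sv)$ iff $T_{-s}(y)\notin A$ iff $y\in(X\setminus A)+sv$. Writing $\overline{A}=X\setminus A$ for the set-theoretic complement, we conclude that for any plot $Q=A+gv\in\p_v$ one has $\overline{(.)}\circ Q=\overline{A}+gv$, which is again of the admissible form; hence the complement map carries plots to plots and is therefore smooth, and being a smooth involution it is in fact a diffeomorphism of $(\mathfrak{P}(X),\p_v)$.

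Finally, for isolation I would again use that $T_s$ is a bijection to obtain $A+sv=\emptyset\iff A=\emptyset$ and $A+sv=X\iff A=X$ (the latter since $X+sv=X$ because $X$ is a vector space). Hence for any plot $Q=A+gv$ the preimage $Q^{-1}(\{\emptyset\})$ is the whole domain if $A=\emptyset$ and is empty otherwise, and likewise for $Q^{-1}(\{X\})$; in either case it is open. Therefore $\{\emptyset\}$ and $\{X\}$ are D-open, i.e. $\emptyset$ and $X$ are isolated. Alternatively, once $\emptyset$ is known to be isolated, $X$ is isolated because the complement diffeomorphism established above exchanges the two.
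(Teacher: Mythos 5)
The paper states this theorem without proof, so there is no argument of the authors to compare yours against; judged on its own, your proof is correct and is the natural one. All the verifications you carry out (covering, smooth compatibility, locality, complement-stability via $X\setminus(A+sv)=(X\setminus A)+sv$, and isolation via $A+sv=\emptyset\iff A=\emptyset$ and $A+sv=X\iff A=X$) go through as written, and the reduction of $f\circ P$ to an arbitrary $g\in C^\infty(U,\R)$ is a harmless and clarifying normalization. The one substantive point is your treatment of locality, and you are right to flag it: the family as literally displayed is \emph{not} closed under the locality axiom. Your periodic counterexample can be made completely explicit: take $X=\R$, $v=1$, $A=\Z$, let $U$ be an annulus in $\R^2$ and set $Q(r)=\Z+\theta(r)/(2\pi)$ with $\theta$ the (multivalued) angle; then $Q$ is well defined and locally of the form $\Z+gv$, but any global smooth $g$ with $\Z+g(r)=Q(r)$ would satisfy $g-\theta/(2\pi)\in\Z$ pointwise, hence be a single-valued continuous branch of $\theta/(2\pi)$ on the annulus, which does not exist. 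So the statement only holds with $\p_v$ read as the set of parametrizations that are \emph{locally} of the displayed form (equivalently, the diffeology generated by the displayed family), exactly parallel to the wording used for the globally projectable parametrizations in Section \ref{gp}, whose construction the text says is being mimicked here. With that reading, every step of your argument is local in nature and remains valid, since your identities for the complement and for the preimages of $\{\emptyset\}$ and $\{X\}$ are pointwise; your closing remark that the complement involution exchanges the two isolated points is a nice economy.
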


\begin{Definition}
Let $\mathcal{A} \subset \mathfrak{P}^\star(X)$ be a space of sets stable under the $<v>-$action on $\mathfrak{P}^\star(X).$ Let $V$ be a complete locally convex topological vector space. The functional $\Phi: \mathcal{A} \rightarrow V$ 
\begin{itemize}
\item is $v-$smooth if $\Phi$ is diffeologically smooth, when $V$ is equipped with its nebulae diffeology and when $\mathcal{A}$ is equipped with its subset diffeology induced by $\p_v.$
\item is $v-$differentiable at $A \in \mathcal{A}$ if $$D_{A,v}\Phi = \lim_{t \rightarrow 0^+} \frac{\Phi(A+tv) - \Phi(A)}{t}$$ exists in $V.$
\end{itemize}

\end{Definition}

The second item of this definition is a specialization of Definition \ref{c-diff} for the path $c(t) = A + tv,$ which is smooth in $\p_v$ as well as in all the diffeologies defined in section \ref{s:Pstar}.
\begin{example}
A finite Borel measure $\mu$ on $X$ is Fomin-differentiable in the direction $v$ if and only if for each Borel set $ A $, $$\lim_{t \rightarrow 0}\frac{\mu(A+tv) - \mu(A)}{t}$$ exists (see, e.g., \cite{Bog}). Then this relation fits the fact that $$\forall A, \quad D_{A,v}\Phi = - D_{a,-v}\Phi,$$ and if the measure $\mu$ is infinitely differentiable in the sense of Fomin, then it is easy to see that the measure $\mu$ is $v-$smooth in our terminology, setting $\mathcal{A}$ as the Borel algebra.  
\end{example}
We can extend this last example in the following way. Let $\p$ be a diffeology on a Borel algebra $\mathcal{A}$ on a set $X.$
\begin{Definition}
Let $ k \in \N\cup \{\infty\}.$
We denote by $\mathbb{M}_\p^k(X)$ the space of measures $\mu$ on $\mathcal{A}$ that are $k-$differentiable  maps from $\mathcal{A} - \{\emptyset,X\}$ to $\R_+.$    
\end{Definition}

\subsection{Diffeology on spaces of measures} \label{measdiff}
Let us now consider $\mathbb{M}_+(X)$ the space of ($\sigma-$finite) measures on a diffeological space $X$ equipped with a topology $\tau$ and its associated Borel algebra $\mathcal{B}(\tau)$ as before. In the test space
$$\left\{ev_A: \mu \in \mathbb{M}_+(X) \mapsto \mu(A) \, | \, A \in \mathcal{A} \right\}$$
the mappings $ev_A$ are with values in $\R_+ \cup \{+\infty\}.$
Moreover, on $\R_+,$ all smooth paths $\gamma$ passing through $0$ have a vanishing Taylor development on $\gamma^{-1}(0).$ Therefore, in order to avoid the same problems as in Remark \ref{vaguediff} we change the considered diffeology  on each interval $\R_+$  for another diffeology which is the following:

\begin{Definition}
The \textbf{piecewise smooth diffeology} on $\R$ is defined by the set of piecewise smooth parametrizations of $\R.$ We note this diffeology by $\p_p(\R).$ With the same notations, we also define $\p_p(I)$ where $I$ is a closed bounded interval.

Moreover, we define the \textbf{piecewise smooth diffeology} on $\R \cup \{+ \infty\}$ by $$\p_p(\R_+ \cup \{+ \infty\})= argth^* \left( \p_p([0,1])\right).$$
\end{Definition}

\begin{Definition}
We define $$\p(\mathbb{M}_+(X)) = \bigcap_{A \in \mathcal{A}} ev_A^*\left(\p_p(\R_+ \cup \{+ \infty\})\right).$$
\end{Definition}

This diffeology is difficult to study, but we can remark the following: 
\begin{rem}
Following Remark \ref{vaguediff} and considering the two paths $P_1$ and $P_2,$ $P_1 \cup P_2 \in \p(\mathbb{M}_+(X)) $ and $\p_1 \cap \p_2 \in \p(\mathbb{M}_+(X))$ are smooth.

\end{rem}

\section{Set-valued differentials}\label{stvdiff}

%	One can describe diffeologies for set-valued maps, that is, maps from a diffeological space $X$ with values in $\mathfrak{P}^\star(Y),$ with any diffeology that we have produced. Indeed, 	a set-valued map $\Phi$ from $X$ to $Y$ is a map from $X$ to $\mathfrak{P}^\star(Y).$  	For any diffeology on $\mathfrak{P}^\star(Y),$ one can define the set of smooth set-valued maps $C^\infty(X,\mathfrak{P}^\star(Y)).$
%	However, 
The concept of smoothness in diffeology does not imply most classical properties of \textbf{derivatives}, which highly depend on the underlying diffeology. Indeed, only the \textbf{differential} of a smooth function, in the sense of differential forms, is actually well-established and commonly used. Let us give highlights of the problem of the derivative in diffeology through a generalization of set-valued derivatives.

We fix $(X,\p)$ a diffeological space an a functional $$\Phi: X \rightarrow V$$ where $V$ is a complete locally convex vector space. The functional $\Phi$ is not a priori assumed to be smooth. 

\begin{Definition} \label{c-diff}
Let $x \in X$ and let $c \in C^\infty(\R,X)$ such that $c(0)= x.$ Then the functional $\Phi$ is $c-$differentiable at $x$ if the limit 
$$ D_{x,c} \Phi = \lim_{t \rightarrow  0^+} \frac{\Phi(c(t)) - \Phi(x)}{t}$$
exists in $V.$
\end{Definition}

Classical examples show that the definition of the differentiability with respect to a path $c$ does not only depend on the germ of the path $c$ at $x.$ 

\begin{example}  \label{multi-deriv} Let $X = \R^2$ equipped with its usual diffeology. Let $$\Phi(x,y) = \left\{ \begin{array}{lcr} 
\frac{xy}{|y|} & \quad & \hbox{if } y \neq 0 \\
0 & \quad & \hbox{if } y = 0
\end{array}\right.$$ 
Let $c_\alpha: t \in \R \mapsto c_\alpha(t) = (t, \alpha t^2)$ for $\alpha \in \R.$ All paths $c_\alpha$ have the same germ at $t = 0$ but $ D_{0,c_\alpha} \Phi \in \{-1,0,1\}$ depending on the value of $\alpha.$     
\end{example}
Hence, we define a set-valued version of Hadamard differentiation. We note by $germ(c)$ the germ of a path $c.$

\begin{Definition}
Let $u \in {}^iT_xX.$ 
Then $\Phi$ is differentiable at $x$ in the direction $v$ if $\forall c \in \mathcal{C}_x$ such that $v = germ(c),$ $\Phi$ is $c-$differentiable at $x$ and we define 
$$D_u \Phi (x)= \bigcup_{germ(c)=u}Adh\Big{(} n\big{(}\Phi(c(1/n))-\Phi(c(0))\big{)}\Big{)},$$
where $Adh$ means the adherence set of the sequence, when $n \rightarrow +\infty.$ 
\end{Definition} 
In this definition, the derivative is a set-valued map and $\Phi$ is differentiable in the classical sense if $D_u \Phi$ has an unique value in $V.$ This matches with our classical definition of the derivative of a differentiable function on a smooth manifold. However, for example, on an open subset $U$ of $\R^2$ we have to mention that 
\begin{itemize}
\item for the nebulae diffeology $\p_\infty(U),$ the space of germs at any point is two-dimensional, and coincides with the classical tangent space. 
\item for the spaghetti diffeology $\p_1(U),$ the tangent space at any point is of uncountable dimension \cite{CW2022}. 
\end{itemize}
In Example \ref{multi-deriv}, the family $\left\{c_\alpha; \alpha \in \R\right\}$ has only one germ at $t=0$, which confirms that $$D_{(1,0)}\Phi(0) \subset \{-1,0,1\}$$ for $\p_\infty(U)$ but for $\p_1(U),$ the same notation $D_{(1,0)}\Phi(0)$ only considers the path $c_0(t)$ and its reparametrizations and therefore $$D_{(1,0)}\Phi(0) = \{0\}.$$

\section{Diffeologies and shapes}\label{diffshp}

For a detailed introduction into shape calculus we refer to \cite{DZ2001,Sz1992}. Let $E$ be a set.
\begin{Definition}
Let $d \in \N^*$ and $D \in \mathfrak{P}^\star(\R^d).$ 
\begin{itemize}
\item A \textbf{shape space} is a set $\mathcal{A} \subset \mathfrak{P}^\star(D),$
\item A \textbf{shape functional} is a function $J: \mathcal{A} \rightarrow \R$
and an \textbf{underconstrained shape optimization problem} is given by $$\min_{\Omega \in \mathcal{A}} J(\Omega)$$ 
\item A \textbf{constrained shape optimization problem} involves a shape functional $J: \mathcal{A}\times Y \rightarrow \R,$ where $Y$ is a space of state variables, usually a function space of solutions satisfying a system of PDEs, depending on $\Omega \in \mathcal{A},$ and is given by $$\min_{(\Omega,y) \in \mathcal{A}\times Y} J(\Omega,y).$$
\end{itemize}
\end{Definition}
In order to minimize a functional $\Phi,$ initial steps of some classical methods can rely on:
\begin{itemize}
\item solving $D \Phi (x)=0$,
\item or defining a gradient $\nabla \Phi$. 
\end{itemize} 
In both cases, differentiability is the most powerful tool to deal with minimization procedures. In order to differentiate a shape functional $J$ on a shape space $\mathcal{A},$ we need to define admissible paths defined as follows:
let $F_t$ be a map $\R \rightarrow C^0(D,\R^d),$ with $F_0 =\mathrm{id}_D$ and with a priori no assumption on regularity of the $ 1 $-parameter family $\{F_t\}_{t \in \R} .$ Then, such a map defines a \textbf{flow} $$(\Omega,t) \mapsto \Omega_t = F_t(\Omega)$$ for $\Omega \in \mathcal{A}.$ In classical practice of shape analysis, the family $F_t$ is defined through a suitable vector field $x \in \R^d\mapsto V(x)$  by $$F_t(x) = x + t V(x).$$
\begin{Definition}
The \textbf{Eulerian derivative} of the shape functional $J$ at $\Omega$ in the direction $V$ is defined by $$D_VJ(\Omega) = \lim_{t \rightarrow 0^+} \frac{J(\Omega_t) - J(\Omega)}{t} .$$
\end{Definition}
Therefore, according the Eulerian derivative reads as $$D_VJ(\Omega)= D_{\Omega,v} J$$ with $v(t) = \bigcup_{x \in \Omega} \{x+tv(x)\}.$ The map $t \mapsto (x \mapsto x + tV(x))$ defines a smooth path on $C^\infty(X,X)$ and hence $v \in \p_{gp}.$


\begin{thebibliography}{99}
\bibitem{ARA} Ahmadi, A.; Submersions, immersions, and \'{e}tale maps in diffeology, Preprint \texttt{arXiv:2203.05994}
%		\bibitem{Alb} Albeverio, S.; Daletskii, A.; Lytvynov, E.;
%	De Rham cohomology of configuration spaces with Poisson measure
%	\textit{J. Funct. Anal.} 185, No.1, 240-273 (2001).
%		\bibitem{A}  Albeverio, S.; Kondratiev, Y., Lytvynov, E.; g. F. Us, G.F.; Analysis and geometry on marked configuration spaces;
%	\textit{Infinite Dimensional Harmonic Analysis (Kyoto, September 20-24, 1999)}, H. Heyer et al., eds, Gr\"abner, Altendorf, 1-39 (2000) 
%	\bibitem{AGS2004} Ambrosio, L.; Gigli, N.; Savar\'e, G.; Gradient flows with metric and differentiable structures, and application to the Wasserstein space \textit{Rec. Lincei-Mat. Appl.} \textbf{15} no 3-4, 327--343 (2004)
\bibitem{ADD2020} Ahmadi, A.; Dehghan Nezhad, A.; Davvaz, B.; Hypergroups and polygroups in diffeology, \textit{Comm. Algebra} {\bf 48} no6 2683-2698 (2020)

\bibitem{BH}
Baez, J.; Hoffnung, A.; Convenient categories of smooth spaces, \emph{Trans. Amer. Math. Soc.}, \textbf{363}, 5789-5825 (2011)
\bibitem{BHM2011} Bauer, C.; Harms, P.; Michor, P.W.; Sobolev metrics on shape space of surfaces \textit{J. Geom. Mech.} \textbf{3} no4,  389-438 (2011)
%	\bibitem{BIgKWa2014} Batubenge, A.; Iglesias-Zemmour, P.; Karshon, Y.; Watts, J.A.; Diffeological, Fr\"olicher, and differential spaces (Preprint 2014)

%	\bibitem{BN2005} Batubenge, A.; Ntumba, P.; On the way to Fr\"olicher Lie groups \textit{Quaestionnes Math.} \textbf{28} (2009) 73-93

%	\bibitem{BT2014} Batubenge, A.; Tshilombo, M.H.; Topologies on product and coproduct Fr\"olicher spaces; \textit{Demonstratio Math.} \textbf{47}, no4 (2014) 1012-1024
%	\bibitem{BMTY2005} Beg, M.F.; Miller, M.I.; Trouv\'e, A.; Younes, L.; Computing large deformation diffeomorphic metric mappings via geodesic flows of diffeomorphisms \textit{Int. J. Comput. Vos.} \textbf{61} no2, 139--157 (2005)
%	\bibitem{BB2000} Benamou, J-D.; Brenier, Y.; A computational fluid mechanics solutions to the Monge-Kantorovich mass transfer problem \textit{Numer. Math.} \textbf{84} no3, 375--393 (2012)
\bibitem{Bog} Bogachev, V. I.; {\it Differentiable measures and the Malliavin calculus}, Mathematical Surveys and Monographs \textbf{154}, AMS (2010)
%	\bibitem{B1997} Bookstein, F.L.; \textit{Morphometric tools for landmark data: Geometry and biology} Cambridge University Press (1997)

\bibitem{BGMO}
Borisovich, Yu. G.;  Gel'man, B. D.;  Myshkis, A. D.;  Obukhovskii, V. V.;
\textit{Multivalued mappings}, 
Journal of Soviet Mathematics 24 (1984) 719–791 
%J Math Sci 
https://doi.org/10.1007/BF01305758

\bibitem{Bou}  Bourbaki, N.; {\it El\'ements de math\'ematiques}, Masson, Paris (1981)
%	\bibitem{Cea1986} C\'ea, J.; Conception optimale ou identification des formes calcul rapide de la d\'eriv\'ee directionnelle de la fonction co\^ut \textit{RAIRO Mod\'elisation math\'ematique et analyse num\'erique} \textbf{20} no3, 371--402 (1986)   
%	\bibitem{CN} Cherenack, P.; Ntumba, P.; Spaces with differentiable
%	structure an application to cosmology \textit{Demonstratio Math.}
%	\textbf{34} no 1 (2001), 161-180
%	\bibitem{CSW2014}  Christensen, J.D.; Sinnamon, G.; Wu, E.;The D-topology for diffeological spaces, \textit{Pacific Journal of Mathematics}  \textbf{272} no 1 (2014), 87-110

\bibitem{CW2014}  Christensen, J.D.; Wu, E.; Tangent spaces and tangent bundles for diffeological spaces,  {\it Cahiers de Topologie et G\'eom\'etrie Diff\'erentielle},  \textbf{LVII}, 3-50 (2016)
\bibitem{CW2022} Christensen, J.D.;  Wu, E.; Exterior bundles in diffeology,  \emph{Israel J. Math.} (2022). https://doi.org/10.1007/s11856-022-2372-9
\bibitem{CTCG1995} Cootes; T.F.; Taylor, C.J.; Cooper, D.H.; Graham, J.; Active shape models-their training and applications, \textit{Comput. Vis. Image Underst.} \textbf{61} no1, 38-59 (1995)
\bibitem{DA}
Dehghan Nezhad, A.; Ahmadi,  A.;
A novel approach to sheaves on diffeological spaces,
\textit{Topology Appl.} 263,  141-153 (2019)

\bibitem{DZ2001} Delfour, M.C.; Zol\'esio, J-P.;  Shapes and geometries: Metrics, Analysis, Differential Calculus, and optimization,  \textit{Advances in design and control} {SIAM} \textbf{22} second edition (2001).
\bibitem{DP} Donato, P., Iglesias-Zemmour, P.;  Embedding a diffeological space into its powerset, Available at
\url{http://math.huji.ac.il/~piz/documents/DBlog-Rmk-DOTS.pdf}


\bibitem{DR2007} Droske, M.; Rumpf, M.; Multiscale joint segmentation and registration of image morphology, \textit{IEEE Trans. Pettern Anal. Mach. Intell.} \textbf{29} no12, 2189-2194 (2007)



%		\bibitem{DN2007-1}
%	Dugmore, D.; Ntumba, P.;On tangent cones of Fr\"olicher spaces
%	\textit{Quaetiones mathematicae} \textbf{30} no1 (2007) 67-83
%	\bibitem{DPTA2009} Durrleman, S.; Pennec, X.; Trouv\'e, A.; Ayache, N.; Statistical models on spaces of curves and surfaces based on currents \textit{Med. Imag. Anal.} \textbf{13} no5, 793--808 (2009)
%	\bibitem{FH2001} Fadell, E.R.; Husseini, S.Y.; \textit{Geometry and t%opology of configuration spaces} 
%	Springer, Berlin (2001)
\bibitem{FK} Fr\"olicher, A; Kriegl, A; \textit{Linear spaces and differentiation
theory}, Wiley series in Pure and Applied Mathematics, Wiley Interscience
(1988)

\bibitem{FJSY2009} Fuchs, M.; J\"urder, B.; Scherzer, O.; Yang, H.; Shape metrics on elastic deformations, \textit{J. Math. Imaging Vision} \textbf{35} no1, 86--102 (2009)
\bibitem{GLMS2015} Gangl, P.; Laurain, A.; Meftahi, H.; Strum, K.; Shape optimization of an electric motor subject to nonlinear magnetostatics, \textit{SIAM J. Sci. Comput.} \textbf{35} no6, B1002--B1025 (2015)
%	\bibitem{Gro} Gromov, M.; \textit{Metric structures for Riemannian and non-Riemannian spaces} Modern Birkhauser classics (2007)
\bibitem{GMW2023} Goldammer, N.; Magnot, J-P.; Welker, K.; On diffeologies from infinite dimensional geometry to PDE constrained optimization. Preprint \texttt{arXiv:2302.07838}
\bibitem{GW2022}
{Goldammer, N.; Welker, K.;}
{ {T}owards optimization techniques on diffeological spaces by
generalizing {R}iemannian concepts.}
Preprint. \texttt{arXiv:2009.04262}
%		\bibitem{HKPR} Hagedorn,D.; Kondratiev, Y.; Pasurek, T.; R\"ockner, M.; Gibbs states over the cone of discrete measures ArXiv:1207.2973v1
\bibitem{HL2011} Hinterm\"uller, M.; Laurain, L.; Optimal shape design subject to elliptic variational inequalities, \textit{SIAM J. Control. Optim.} \textbf{49} no3, 1015--1047 (2011)  
\bibitem{HR2004} Hinterm\"uller, M.; Ring, W.; A second order shape optimization approach for image segmentation, \textit{SIAM J. Appl. Math.} \textbf{63}   no2, 442--467 (2004)
%	\bibitem{HTY2009} Holm, D.; Trouv\'e, A.; Younes, L.; The Euler-Poincar\'e theory of metamorphosis \textit{Q. Appl. Math.} \textbf{67} no, 661--685 (2009)
\bibitem{Igdiff} Iglesias-Zemmour, P.
\textit{Diffeology}, 
Mathematical Surveys and Monographs \textbf{185} AMS  (2013)
%	\bibitem{IKP2008} Ito, K.; Kunish, K.; Peilch, G.H.; Variational approaches to shape optimization \textit{ESAIM} \textbf{14} no3, 517-539 (2008)
%		\bibitem[Ism1996]{Ism} Ismaginov, O.V; \textit{Representations of infinite dimensional groups}
%	Translations of mathematical monographs; American mathematical society (1996)
\bibitem{K1984} Kendall, D.G.; Shape manifolds, procrustean metrics, and complex projective spaces, \textit{Bull. Lond. Math. Soc.} \textbf{16} no2, 81-121 (1984)
\bibitem{KMP2007} Kilian, M.; Mitra, N.J.; Pottmann, H.; Geometric Modelling in shape space, \textit{ACM Trans. Gr.} \textbf{26} no64, 1--8 (2007)
\bibitem{KM} Kriegl, A.; Michor, P.W.; \textit{The convenient setting
for global analysis}, Math. surveys and monographs \textbf{53}, American
Mathematical society, Providence, USA. (2000)
\bibitem{Les}
{ Leslie, J.} On a diffeological group realization of certain generalized
symmetrizable kac-moody lie algebras,
\textit{J. Lie Theory} \textbf{13}  427-442 (2003)
\bibitem{LJ2007} Ling, H.; Jacobs, D.W.; Shape classification using the inner distance, \textit{IEEE Trans. Pattern Anal. Mach. Intell.} \textbf{29} no2, 286-299 (2007)
\bibitem{Ma2006-3} Magnot, J-P.; Diff\'eologie sur le fibr\'e d'holonomie d'une connexion en dimension infinie,
\textit{C. R. 
Math. Acad. Sci., Soc. R. Can.} 
\textbf{28}, 
no. 4, 121--127 (2006)



\bibitem{Ma2013} Magnot, J-P.; Ambrose-Singer theorem on diffeological bundles and complete integrability of the KP equation, \textit{Int. J. Geom. Meth. Mod. Phys.} \textbf{ 10}, No. 9, Article ID 1350043, 31 p. (2013)
%	\bibitem{MaICM} Magnot, J-P.; Infinite dimensional integrals beyond Monte Carlo methods: yet another approach to normalized infinite dimensional integrals; IC-MSQUARE 2012: International Conference on Mathematical Modelling in Physical Sciences, 3-7 September 2012, Budapest, Hungary; \textit{J. Phys.: Conf. Ser.} {\bf 410}, 012003 (2013)
%		\bibitem{Ma2016-2} Magnot, J-P.; 
%	Differentiation on spaces of triangulations and optimized triangulations.\textit{ 
%		5th international conference in mathematical modelling in physical science (IC-MSquare 2016) J. Phys: Conf. Ser.} \textbf{738} article ID 012088 (2016) 
%	\bibitem{Ma2017-0} Magnot, J-P. ; From configurations to branched configurations and beyond \textit{Res. Rep. Math.}  \textbf{1} no1 art. ID 1000105 (2017)
%	\bibitem{Ma2019} Magnot, J-P.; Remarks on the geometry and the topology of the loop spaces $H^s(S^1,N),$ for $s \leq 1/2.$ \textit{Int. J. Maps in Mathematics} \textbf{2} no1 4--37 (2019)
\bibitem{Ma2020-3} Magnot, J-P.; On the differential geometry of numerical schemes and weak solutions of functional equations,
{\it Nonlinearity}
{\bf 33}  No. 12, 6835-6867 (2020) 
\bibitem{MH1993} Marsden, J.E.; Hugues, T.T.H. \textit{Mathematical foundations of elasticity}, Dover (1993)
\bibitem{Mi1956}
Michael, E.; Continuous Selections. I,  \textit{Annals of mathematics}, Second Series, \textbf{63} no2, 361-382 (1956)
\bibitem{MM2005} Michor, P.; Mumford, D.; Vanishing geodesic distance on spaces of submanifolds, \textit{Doc. Math.} \textbf{10} 217-245 (2005)
\bibitem{MM2007} Michor, P.; Mumford, D.;An overview of Riemannian metrics on spaces of curves using the Hamiltonian approach, \textit{Appl. Comput. Harmon. Anal.} \textbf{23} no1, 74-113 (2007)
\bibitem{MSJ2007} Mio, W.; Srivastava, A.; Joshi, S.; On shape of plane elastic curves, \textit{Int. J. Comput. Vis.} \textbf{73} no3, 307-324 (2007) 
\bibitem{NSSW2015} Nagel, A.; Schultz, V.H.; Seibenborn, M.; Wittum, G.; Scalable shape optimization methods for structured inverse modelling in 3D diffusive processes, \textit{Comptut. Vis. Sci.} \textbf{17} no2, 79-88 (2015)
%	\bibitem{Nt2002} Ntumba, P.; DW Complexes and Their Underlying Topological Spaces
%	\textit{Quaestiones Math.}  {\bf 25},
%	119-134 (2002) 
\bibitem{Olv} Olver, P.J.; \textit{Applications of Lie groups to differential equations}, (2nd edition) GTM \textbf{107}, Springer (1993)

\bibitem{Pa2015} Paganini, A.; Approximative shape gradients for interface problems, In: Pirelli, A., Leugering, G. (eds.)New trends in shape optimization \textit{International series of numerical Mathematics} \textbf{266} 217-227, Springer, New-York (2015)
%	\bibitem{pervova2017}
%	Pervova, E.;
%	Diffeological gluing of vector pseudo-bundles and pseudo-metrics on them. 
%	\textit{Topology Appl.} \textbf{220}, 65-99 (2017). 
%	\bibitem{Pet} Peters, J. F.; \textit{Computational proximity} Intelligent systems Reference Library {\bf 102} (2016)
%	\bibitem{PS} Pressley, A.; Segal, G.; \textit{Loop groups} Oxford University Press (1986)
\bibitem{Rob} Robart, T.; Sur l'int\'egrabilit\'e des sous-alg\`ebres de
Lie en dimension infinie, \textit{Can. J. Math.} \textbf{49} (4) 820-839 (1997) 

\bibitem{SISG2013} Schmidt, S.; Ilicx, C.; Schultz, V.H.; Gauger, N.R.; Three-dimensional large-scale aerodynamic shape optimization based on shape calculus, \textit{AIAA J.} \textbf{51} no 11, 2615-2627 (2013) 
%	\bibitem{SSW2016} Schultz, Y.H.; Siebenborn, M.; Welker, K.; Efficient PDE constrained shape optimization based on Steklov-Poincar\'e type metrics \textit{SIAM J. Optimization} \textbf{36} no4, 2800--2819 (2016)  
%	\bibitem{SW2017} Siebenborn, M.; Welker, K.; Computational aspects of multigrid methods for optimization in shape spaces \textit{SIAM J. Sci. Comput.} \textbf{39} no 6 B1156-B1177 (2017)
\bibitem{Sz1992} Sokolowski, J.; Zol\'esio, J-P.; \textit{Introduction to shape optimization}, Computational mathematics \textbf{16} Springer, New-York (1992)
\bibitem{Sou} Souriau, J.M.; Un algorithme g\'en\'erateur de structures quantiques, 
\textit{Ast\'erisque}, Hors S\'erie, 341-399  (1985) 

\bibitem{SZ}
Stefani, G., Zecca, P.;
Properties of convex sets with application to differential theory of multivalued
functions, \textit{Nonlinear Anal.} 2 No. 5, 583-595 (1978)	

\bibitem{Sta} 
Stacey, A.,
Comparative smootheology, 
\textit{Theory Appl. Categ.} 25(4)  (2011) 64-117.

%	\bibitem{Stu2015} Sturm, K.; Shape differentiability under non-linear PDE constraints. In: Pirelli, A., Leugering, G. (eds.)New trends in shape optimization \textit{International series of numerical Mathematics} \textbf{266} 271--300, Springer, New-York (2015)
%	\bibitem{TY2005} Trouv\'e, A.; Younes, L.; Methamorphoses through Lie group actions. \textit{Found. Comput. Math.} \textbf{5} no 2, 173--198 (2005)
%	\bibitem{V2008} Villani, C.; \textit{Optimal transport, old and new} Springer (2008)
\bibitem{Wa} Watts, J.; \textit{Diffeologies, differentiable spaces
and symplectic geometry}, PhD thesis, university of Toronto (2012) \texttt{arXiv:1208.3634}
\bibitem{W2021} Welker, K.;  Suitable spaces for shape optimization, \textit{Appl. Math. Optim.} 84. Suppl 1 869-902 (2021) 
\bibitem{WBRS2011} Wirth, B.; Bar, L.; Rumpf, M.; Sapiro, G.; A continuum mechanical approach to  geodesics in shape space, \textit{Int. J. Comput. Vision} \textbf{93} no3, 293--318 (2011)
\bibitem{WR2009} Wirth, B.; Rumpf, M.; A nonlinear elasticshape averaging approach, \textit{Siam J. Imagnog Sci.} \textbf{2} no3 800-833 (2009)
\bibitem{Z2007} Zol\'esio, J-P.; Control of moving domains, shape optimization  and variational tube transformations, \textit{Int. Ser. Numer. Math.} \textbf{155} 329-382 (2007)
\end{thebibliography}
\end{document}